\numberwithin{equation}{section}
\renewcommand\d{\partial}
\renewcommand\a{\alpha}
\renewcommand\o{\omega}
\newcommand\R{\mathbb R}\newcommand\N{\mathbb N}\newcommand\Z{\mathbb Z}
\newcommand\C{\mathbb C}
\def\th{\theta}
\def\l{\lambda}
\def\epsilon{\varepsilon}
\def\e{\varepsilon}
\def\dd{{\rm d}}
\newcommand\br{\begin{rem}}
\newcommand\er{\end{rem}}
\newcommand\bp{\begin{pmatrix}}
\newcommand\ep{\end{pmatrix}}
\newcommand\be{\begin{equation}}
\newcommand\ee{\end{equation}}
\newcommand\ba{\begin{equation}\begin{aligned}}
\newcommand\ea{\end{aligned}\end{equation}}
\newcommand\nn{\nonumber}
\newcommand{\calH}{\mathcal{H}}
\newcommand{\calP}{\mathcal{P}}
\newcommand{\dx}{{\rm d} {x}}
\newcommand{\tds}{{\tilde s}}
\newtheorem{defi}{Definition}[section]
\newtheorem{theorem}[defi]{Theorem}
\newtheorem{proposition}[defi]{Proposition}
\newtheorem{lemma}[defi]{Lemma}
\newtheorem{corollary}[defi]{Corollary}
\newtheorem{remark}[defi]{Remark}
\numberwithin{equation}{section}
\begin{document}

\title{Convergence rates in the nonrelativistic limit of the cubic Klein-Gordon equation}

\author{Weizhu Bao\footnote{Department of Mathematics, National University of Singapore, Singapore, 119076. Email: matbaowz@nus.edu.sg.}
\and Yong Lu\footnote{Department of Mathematics, Nanjing University,  210093 Nanjing, China. Email: luyong@nju.edu.cn. }
\and Zhifei Zhang\footnote{School of Mathematical Sciences, Peking University, 100871 Beijing, China. Email:  zfzhang@math.pku.edu.cn.}}

\date{} 

\maketitle

\begin{abstract}

In this paper, we study the nonrelativistic  limit of the cubic nonlinear Klein-Gordon equation in $\R^{3}$ with a small parameter $0<\e\ll 1$, which is inversely proportional to the speed of light. We show that the cubic nonlinear Klein-Gordon equation converges to the cubic nonlinear Schr\"odinger equation with a convergence rate of order $\e^{2}$. In particular, for the defocusing case and smooth initial data, we prove error estimates of the form $(1+t)\e^{2}$ at time $t$ which is valid up to long time of order $\e^{-1}$; while for nonsmooth initial data, we prove error estimates of the form $(1+t)\e$ at time $t$ which is valid up to long time of order $\e^{-\frac{1}{2}}$. These specific forms of error estimates  coincide with the numerical results obtained in \cite{BZ19,SZ20}.

\end{abstract}

{\bf Keywords}:
cubic Klein-Gordon equation; nonrelativistic limit; convergence rates.


\renewcommand{\refname}{References}

\section{Introduction}

\subsection{Setting}\label{sec:setting}

The Klein-Gordon equation is a relativistic version of the Schr\"odinger equation and is used to describe the
motion of a spinless particle. Consider the nonlinear Klein-Gordon equation in dimensionless form as
\be\label{0}
\e^2\d_{tt} u_{\e}-\Delta  u_{\e} + \e^{-2}u_{\e}  + f(u_{\e})=0,\quad t> 0,\quad x\in \R^3.
\ee
Here $u_{\e}:=u_{\e}(t,x)$ is the wave function (or order parameter) and is assumed to be real-valued, where the spatial variable $x$ is defined in the whole three dimensional space. We restrict to the typical cubic nonlinearity of which the structure is frequently used (see Remark \ref{rem-nonlinear} for more details):
\be\label{form-f}
f(u_{\e})=\l u_{\e}^{3}, \qquad \hbox{with}\quad  0\neq \l \in \R.
\ee
The non-dimensional small parameter $\e\in (0,1)$ is inversely proportional to the speed of light. In this paper, we focus on the convergence rates in the  nonrelativistic  regime ($\e \to 0$) of the nonlinear Klein-Gordon equation \eqref{0}. The initial datum is given by
\be\label{ini-0}
 u_{\e}(0,x)= \phi(x),\quad (\d_t u_{\e})(0,x)= \e^{-2} \psi(x),\quad x\in \R^3,
\ee
where $\phi$ and $\psi$ are real valued functions satisfying certain regularity.

For a fixed $\e>0$, the well-posedness of the nonlinear Klein-Gordon equation is well studied, see for example \cite{GV1, GV2}. The nonrelativistic limit of \eqref{0}-\eqref{ini-0} has gained a lot of interest both in mathematical analysis and in numerical analysis, see for example \cite{MS,MT, BN, M, Nak02, MN1, MN2, SV, BCZ14, BD12, BZ16,BZ19,BZ19B, SZ20} and references therein. In particular, for the typical polynomial nonlinearity $f(u)=\lambda |u|^q u$ with $0< q<\frac 4 {d-2}$,  Masmoudi and Nakanishi \cite{MN2} showed that a wide class of solutions $u$ to \eqref{0}-\eqref{ini-0} can be described by a system of coupled nonlinear Schr\"odinger equations. More precisely in \cite{MN2}, for $H^1$ initial data, a first order approximation and a convergence rate of order $o(\e^{\frac 12})$ was given;  for  $H^3$ initial data, a second order approximation and a convergence rate of order $o(\e)$ was given.  In their series of paper, Masmoudi and Nakanishi also studied the singular limits including nonrelativistic limit for related models and established rather complete theory for the asymptotic analysis in energy spaces: the limit from Klein-Gordon-Zakharov system to Zakharov system \cite{MN4, MN5, MN6}, from Maxwell-Klein-Gordon and Maxwell-Dirac to Poisson-Schr\"odinger \cite{MN3}.  In \cite{LZ17}, the second author and third author improved the error estimates in \cite{MN2} for smooth data:  $O(\e)$ for first order approximation and $O(\e^2)$ for second order approximation. Moreover, it is shown in \cite{LZ16} that for quadratic nonlinearity, the approximation given  in \cite{LZ17} is valid up to long time of order $\e^{-1}$.

Recently, based on extensive numerical results, it was observed numerically that
the solution of the cubic Klein-Gordon equation converges (i) of order
$O(\e^{2})$ with linear growth in time $t$ to the
solution of the cubic Schr\"odinger equation
for $H^3$ initial data \cite{BZ19}, and (ii) of order
$O(\e^{2})$ with global-in-time (or uniformly in time) to the
solution of the cubic Schr\"odinger equation with wave operator
for $H^2$ initial data \cite{BZ19}. Similar results were given in
\cite{BZ17} for the relativistic limit of the Klein-Gordon-Schr\"{o}dinger
system and in \cite{BZ16} for the high-plasma-frequency limit of
Klein-Gordon-Zakharov system. In particular, numerical simulations  in \cite{SZ20} and \cite{BZ16} indicate that, in the nonrelativistic regime, the Klein-Gordon(-Zakharov) equation converges to the nonlinear Schr\"odinger (Zakharov) equation with an error bound of the form $(1+t) \e^{2}$ for smooth data and of the form $(1+t) \e$ for nonsmooth data. Furthermore, Schwartz and Zhao \cite{SZ20} gave a priori error estimates of order  $O(\e^{2})$ for the first order expansion by numerical analysis. In this paper, we show exactly the forms of error estimates suggested by the numerical simulations for the defocusing case in three dimensional setting, i.e., $(1+t) \e^{2}$ for smooth data and of the form $(1+t) \e$ for nonsmooth data.

Throughout the paper, $C$ denotes a generic constant independent of $(\phi,\psi)$, $C_{0}$ denotes a generic constant depending on the norms $\|(\phi, \psi)\|_{H^{s}\cap L^{1}}$ of initial datum $(\phi,\psi)$; $D(t)$ denotes a generic positive function that is continuous and increasing in $t$ and may depend on the initial datum $(\phi,\psi)$ in terms of Sobolev norms $\|(\phi, \psi)\|_{H^{s}}$.  In particular,  $C, C_{0}$ and $D(t)$ are independent of $\e$, while the values of them may differ from line to line.  We often use $d$ to denote the spatial dimensions, while in this paper it is always taken $d=3$. We often write the Sobolev spaces $H^{s}(\R^{3})$ as $H^{s}$ for short.  We use $\Z_{\rm even}$ ($\Z_{\rm odd}$) to denote the collection of all even (odd) integers, and we use $\Z_+$ ($\Z_{\geq 0}$) to denote the collection of all positive (nonnegative) integers.

\subsection{Main results}\label{sec:intr-f-gene}

We will show that in the  nonrelativistic regime $\e \to 0$, the cubic Klein-Gordon equation can be well approximated by the cubic Schr\"odinger equation:
\be\label{eq-g-cub}
2i \d_{t} g_{0} -\Delta g_{0} +3\l |g_{0}|^{2} g_{0} =0,\quad g_{0}(0,\cdot)=\frac{\phi-i \psi}{2}.
\ee
We shall impose the following regularity assumptions on the initial data:
\be\label{ini-ass1}
\phi, \psi \in H^{s}(\R^{3}), \quad s\geq 1.
\ee
To ensure the decay estimates to the solutions of the cubic Schr\"odinger equation
\eqref{eq-g-cub}, we further assume
\be\label{ini-ass2}
\phi, \psi \in  L^{1}(\R^{3}).
\ee

We recall some results concerning the well-posedness of \eqref{eq-g-cub}. Firstly, the classical theory on hyperbolic equations gives (see Theorem 1 in \cite{book-Bour-Sch}, see also \cite{Met08}):
\begin{proposition}[Local well-posedness]\label{prop-Sch-local}
Suppose $s > \frac{3}{2}$ in \eqref{ini-ass1}.  Then there exists a unique local-in-time solution $g_{0} \in C([0,T^{*}), H^{s}(\R^{3}))$  to \eqref{eq-g-cub} with existence time $T^{*} = T^{*}(\|(\phi, \psi) \|_{H^{s}(\R^{3})}).$

\end{proposition}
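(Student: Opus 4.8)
The plan is to prove Proposition \ref{prop-Sch-local} by a classical contraction-mapping (Picard iteration) argument carried out directly in $C([0,T],H^s(\R^3))$, exploiting the fact that $H^s(\R^3)$ is a Banach algebra as soon as $s>\tfrac32=\tfrac d2$. First I would recast \eqref{eq-g-cub} in Duhamel form. Dividing by $2i$, the equation reads $\d_t g_0=-\tfrac{i}{2}\Delta g_0+\tfrac{3i\l}{2}|g_0|^2g_0$, so with $U(t):=e^{-\frac{i}{2}t\Delta}$ the free propagator — a Fourier multiplier with symbol $e^{\frac{i}{2}t|\xi|^2}$ of modulus one, hence a one-parameter unitary group on every $H^s(\R^3)$ — and with $g_0(0)=\tfrac{\phi-i\psi}{2}$, a (mild) solution is exactly a fixed point of the map
\[
\Phi(g)(t)\;=\;U(t)\,g_0(0)\;+\;\frac{3i\l}{2}\int_0^t U(t-\tau)\big(|g|^2 g\big)(\tau)\,\dd\tau .
\]

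Next, set $R:=2\|g_0(0)\|_{H^s}\le\|\phi\|_{H^s}+\|\psi\|_{H^s}$ and let $\mathcal B_{R,T}$ be the closed ball of radius $R$ in the Banach space $C([0,T],H^s(\R^3))$. Since $s>\tfrac32$, the fractional Leibniz (Kato--Ponce) inequality yields the algebra estimate $\|uv\|_{H^s}\le C_s\|u\|_{H^s}\|v\|_{H^s}$, from which one gets, for the cubic nonlinearity (writing $|g|^2g=g\bar g g$ and noting $\|\bar g\|_{H^s}=\|g\|_{H^s}$),
\[
\big\||g|^2g\big\|_{H^s}\le C_s\|g\|_{H^s}^3,\qquad
\big\||g|^2g-|h|^2h\big\|_{H^s}\le C_s\big(\|g\|_{H^s}^2+\|h\|_{H^s}^2\big)\|g-h\|_{H^s}.
\]
Because $U(\cdot)$ is an isometry on $H^s$, these bounds give, for $g,h\in\mathcal B_{R,T}$,
\[
\|\Phi(g)\|_{C([0,T],H^s)}\le\tfrac R2+C_s\,T\,R^3,\qquad
\|\Phi(g)-\Phi(h)\|_{C([0,T],H^s)}\le C_s\,T\,R^2\,\|g-h\|_{C([0,T],H^s)} .
\]
Choosing $T=T^*$ small enough that $C_sT^*R^2\le\tfrac14$ makes $\Phi$ a contraction of $\mathcal B_{R,T^*}$ into itself, and the Banach fixed point theorem produces a unique $g_0\in C([0,T^*],H^s(\R^3))$ solving \eqref{eq-g-cub}. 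By construction $T^*$ depends only on $R$, hence only on $\|(\phi,\psi)\|_{H^s(\R^3)}$, as stated. Uniqueness in the full class $C([0,T^*),H^s)$ (not merely in the ball) follows from a Gronwall argument based on the same Lipschitz estimate, and Lipschitz continuous dependence on the data is obtained identically; iterating the construction from later times extends the solution to a maximal half-open interval $[0,T^*)$.

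I do not expect any genuine obstacle here: at regularity $s>\tfrac32$ no dispersive (Strichartz) machinery is needed, and the whole proof is standard subcritical local theory. The only point that requires a little care is the nonlinear estimate when $s$ is not an integer, where one must invoke the fractional product/Leibniz rule together with the Sobolev embedding $H^s(\R^3)\hookrightarrow L^\infty(\R^3)$ — which holds precisely because $s>\tfrac32$ — rather than the elementary Leibniz rule. Everything else is the routine fixed-point bookkeeping sketched above.
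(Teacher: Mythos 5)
Your contraction-mapping argument in $C([0,T],H^s)$ using the Duhamel formula, unitarity of the free Schr\"odinger group on $H^s$, and the Banach-algebra property of $H^s(\R^3)$ for $s>\tfrac32$ is correct, and it is exactly the standard subcritical local theory that the paper appeals to by citing Theorem 1 of \cite{book-Bour-Sch} (and \cite{Met08}) rather than writing out a proof. Since the paper offers no proof of its own, there is nothing to compare against; your argument fills the gap in the expected way.
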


For the focusing case $\l<0$, the solution of the cubic Schr\"odinger equation
\eqref{eq-g-cub} may blow up in finite time, while for the defocusing case with $\l>0$, the cubic Schr\"odinger equation is globally well-posed and enjoys certain large time decay estimates (see Theorem 2 and Remark 2 in \cite{book-Bour-Sch} for the global well-posedness and large time behavior (see also \cite{CKSTT04}, \cite{CKSTT08} and \cite{KM10}), see Theorem 1.1 in \cite{FSZ22}  and Theorem 1.4 in \cite{FZ20} for the decay estimates):
\begin{proposition}[Global well-posedness]\label{prop-Sch-global}
Let $\l>0$. Suppose $(\phi, \psi)$ satisfy \eqref{ini-ass1} with $s \geq 2$.  Then there exists a unique global-in-time solution $g_{0} \in L^{\infty}([0, \infty), H^{s}(\R^{3}))$ to \eqref{eq-g-cub} with
\be\label{est-Sch-global1}
\|g_{0}(t, \cdot)\|_{H^{s}(\R^{3})} \leq C_{0},  \quad   t\in [0,\infty).
\ee
Moreover, if the initial datum $(\phi, \psi)$ satisfies \eqref{ini-ass2}, there holds the decay estimate
\be\label{est-Sch-global2}
\|g_{0}(t, \cdot)\|_{L^{\infty}(\R^{3})} \leq  C_0(1+t)^{-\frac{3}{2}}, \quad   t\in (0,\infty).
\ee

\end{proposition}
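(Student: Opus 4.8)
The plan is to handle the three assertions in turn; each is classical or available in the cited works, so I only sketch the structure.

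\emph{Global existence and the uniform $H^1$ bound.} Writing \eqref{eq-g-cub} as $i\partial_t g_0 = \frac12\Delta g_0 - \frac{3\lambda}{2}|g_0|^2 g_0$, one checks directly that along the flow the mass $\|g_0(t)\|_{L^2}$ and the energy
\[
E(g_0)(t) = \frac{1}{2}\|\nabla g_0(t)\|_{L^2}^2 + \frac{3\lambda}{4}\|g_0(t)\|_{L^4}^4
\]
are conserved. For $\lambda>0$ both terms on the right are nonnegative, so $\|g_0(t)\|_{H^1}^2 \le \|g_0(0)\|_{L^2}^2 + 2E(g_0)(0)\le C_0$ for all $t$ (bounding $\|g_0(0)\|_{L^4}^4$ by $\|g_0(0)\|_{H^1}$ via Gagliardo--Nirenberg and $\|g_0(0)\|_{H^1}$ by $\|(\phi,\psi)\|_{H^1}$). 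Since the cubic nonlinearity in $d=3$ is energy-subcritical ($\dot H^{1/2}$-critical), local well-posedness holds in $H^1$ by Strichartz estimates and a contraction argument, with an existence time depending only on $\|g_0(0)\|_{H^1}$; iterating with the a priori bound gives a solution in $C([0,\infty);H^1)$.

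\emph{Propagation of $H^s$ regularity and the uniform bound \eqref{est-Sch-global1}.} For $s\ge 2$, applying $\langle\nabla\rangle^s$ to the equation and running a Gronwall estimate against Strichartz norms shows that $\|g_0\|_{L^\infty_t([0,T];H^s)}$ is finite on any interval $[0,T]$ on which a subcritical Strichartz norm of $g_0$ is finite, and the latter is finite on each bounded interval by iterating the local theory; this yields a global $H^s$ solution. To upgrade to the \emph{time-uniform} bound \eqref{est-Sch-global1} one invokes global well-posedness and scattering in the energy space for the defocusing cubic NLS in $\R^3$, a consequence of the interaction Morawetz inequality (\cite{CKSTT04,CKSTT08,KM10}; see also \cite{book-Bour-Sch}): it furnishes a \emph{finite} global spacetime bound $\|g_0\|_{L^q_t([0,\infty);L^r_x)}\le C_0$ for a suitable admissible pair. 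Splitting $[0,\infty)$ into finitely many subintervals on which this norm is small and applying a stability/persistence-of-regularity estimate on each gives $\|g_0(t)\|_{H^s}\le C_0$ uniformly in $t$ (equivalently: scattering gives $g_0(t)=e^{it\Delta/2}u_+ + o_{H^s}(1)$ with $u_+\in H^s$, and $e^{it\Delta/2}$ is an $H^s$ isometry).

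\emph{The decay estimate \eqref{est-Sch-global2}.} For $t\in[0,1]$ this is immediate from the Sobolev embedding $H^2(\R^3)\hookrightarrow L^\infty(\R^3)$ and \eqref{est-Sch-global1}: $\|g_0(t)\|_{L^\infty}\le C\|g_0(t)\|_{H^2}\le C_0\le 2^{3/2}C_0(1+t)^{-3/2}$. For $t\ge1$ one uses the Duhamel representation
\[
g_0(t)=e^{i\frac{t}{2}\Delta}g_0(0)-i\frac{3\lambda}{2}\int_0^t e^{i\frac{t-s}{2}\Delta}\big(|g_0|^2 g_0\big)(s)\,ds
\]
together with the free dispersive estimate $\|e^{i\tau\Delta/2}h\|_{L^\infty}\lesssim|\tau|^{-3/2}\|h\|_{L^1}$. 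The linear term is $\lesssim t^{-3/2}\|g_0(0)\|_{L^1}\le C_0 t^{-3/2}$ by \eqref{ini-ass2}; the Duhamel term is handled by a continuity argument on $\mathcal A(t):=\sup_{0\le s\le t}(1+s)^{3/2}\|g_0(s)\|_{L^\infty}$. On the off-diagonal range $s\in[0,t/2]$ one has $|t-s|^{-3/2}\lesssim t^{-3/2}$, and $\|(|g_0|^2 g_0)(s)\|_{L^1}=\|g_0(s)\|_{L^3}^3$ is integrated against a combination of the conserved $L^2$, $L^6$ bounds and $\mathcal A$; on the near-diagonal range $s\in[t/2,t]$, where the pointwise kernel is not time-integrable, one instead uses the $L^{p'}\!\to\!L^p$ dispersive estimates for finite $p$ together with the global spacetime bounds from scattering, recovering an $L^\infty$ bound at the end via Gagliardo--Nirenberg and \eqref{est-Sch-global1}. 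Closing the bootstrap yields \eqref{est-Sch-global2}; the details are those of \cite[Theorem~1.1]{FSZ22} and \cite[Theorem~1.4]{FZ20}.

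\emph{Main obstacle.} The two genuinely delicate points are: (i) the \emph{time-uniform} $H^s$ bound, which conservation laws alone do not give---a general defocusing energy-subcritical equation only yields polynomial-in-$t$ growth of high Sobolev norms---so one must use the scattering theory for $\R^3$ cubic NLS; and (ii) closing the bootstrap for \eqref{est-Sch-global2} under only the assumption $(\phi,\psi)\in L^1\cap H^s$ (rather than weighted $L^2$), which rules out the vector-field (Galilean) method and forces the near-diagonal Duhamel contribution to be absorbed through the global scattering bounds. Both are supplied by the cited references; no new argument is required here.
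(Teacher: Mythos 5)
The paper does not supply its own proof of Proposition~\ref{prop-Sch-global}; it simply cites \cite{book-Bour-Sch,CKSTT04,CKSTT08,KM10} for global well-posedness and the uniform $H^s$ bound via scattering, and \cite{FSZ22,FZ20} for the $L^\infty$-decay estimate. Your sketch reconstructs exactly that chain of reasoning (conservation laws for $H^1$, scattering plus persistence of regularity for the time-uniform $H^s$ bound, Duhamel with dispersive estimates and a bootstrap for the decay), correctly flags the two genuinely delicate points, and defers the decay bootstrap to the same references, so it matches the paper's approach.
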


\begin{remark}\label{rem-defSch}
The constant $C_{0}$ on the right-hand side of \eqref{est-Sch-global1} only depends on the norms $\|(\phi ,\psi)\|_{H^{s}(\R^{3})}$. As shown in Theorem 1.1 in \cite{FSZ22}, the constant $C_{0}$ on the right-hand side of \eqref{est-Sch-global2} depends on $ \|(\phi, \psi)\|_{H^{1} \cap L^{1} (\R^{3})}.$

The decay estimate \eqref{est-Sch-global2} can also be obtained by imposing different regularity assumptions on initial data. See for example Theorems 6.1 and 6.2 in \cite{HT86} where  the following weighted integrability conditions are imposed:
\be\label{ini-ass3}
\phi, \psi \in  W^{1,\frac{4}{3}}(\R^{3}), \quad |x|\phi, |x|\psi \in  H^{1}(\R^{3}) \cap L^{\frac{4}{3}}(\R^{3}).
\ee

\end{remark}

Based on the numerical results, it is observed in \cite{BZ19,SZ20} (and \cite{BZ16} for the Klein-Gordon-Zakharov equations and \cite{BZ17} for the Klein-Gordon-Sch\"{o}dinger equations) that, for smooth data, the convergence rates of the cubic Klein-Gordon equation to the cubic Schr\"odinger equation are of the form $(1+t)\e^{2}$ at time $t$ in the nonrelativistic regime $\e\to 0$, while for nonsmooth data, the convergence rates are of the form $(1+t)\e$.  We rigorously justify such numerical results in the defocusing case ($\l>0$) and we further show such a convergence rate is valid up to long time of order $\frac{1}{\e}$ for the case with smooth data and of order $\frac{1}{\sqrt\e}$ for the case with nonsmooth data. This is our first result:
\begin{theorem}\label{thm2}
 Let $\l>0$. Suppose $(\phi, \psi)$ satisfy \eqref{ini-ass1}--\eqref{ini-ass2}. 
 \begin{itemize}
 \item[(i)] If $s > 8 + \frac{3}{2}$, then \eqref{0}--\eqref{ini-0}  admits a unique solution $u_{\e}\in C([0,\frac{T_{0}}{\e}]; H^{s-8}(\R^{3}))$ for some $T_{0}>0$ independent of $\e$. Moreover, there holds the error estimate
\ba\label{stable-defo-thm2-3d}
\|\big (u_{\e}-(e^{i\frac{t}{\e^{2}}} g_{0} + e^{-i\frac{t}{\e^{2}}} \bar g_{0}) \big)(t,\cdot)\|_{H^{s-8}(\R^{3})} &\leq C_0(1+t)\e^{2}, \   \mbox{for all $t \leq \frac{T_{0}}{\e}$}.
\ea

\item[(ii)] If $s > 4 + \frac{3}{2}$,  then \eqref{0}--\eqref{ini-0}  admits a unique solution $u_{\e} \in C([0,\frac{\hat T_{0}}{\sqrt\e}]; H^{s-4}(\R^{3}))$ for some $\hat T_{0}>0$ independent of $\e$. Moreover, there holds the error estimate
\ba\label{stable-defo-thm3-3d}
\|\big (u_{\e}-(e^{i\frac{t}{\e^{2}}} g_{0} + e^{-i \frac{t}{\e^{2}}} \bar g_{0}) \big)(t,\cdot)\|_{H^{s-4}(\R^{3})} &\leq  C_0(1+t)\e, \    \mbox{for all $t \leq \frac{\hat T_{0}}{\sqrt\e}$}.
\ea
\end{itemize}
\end{theorem}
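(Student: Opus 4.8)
\emph{Step 1: a dispersive reduction.} The plan is to recast \eqref{0} as a semilinear dispersive equation with a fast phase that can be factored out. Set $\langle\e D\rangle:=\sqrt{1-\e^2\Delta}$ and $\omega_\e:=\e^{-2}\langle\e D\rangle$, so that \eqref{0} reads $\partial_{tt}u_\e+\omega_\e^2u_\e=-\e^{-2}\l u_\e^3$. Introducing $v_\e:=u_\e-i\omega_\e^{-1}\partial_t u_\e$ (whose real part is $u_\e$) and writing $v_\e=e^{it/\e^2}w_\e$ one obtains, using $\e^{-2}\omega_\e^{-1}=\langle\e D\rangle^{-1}$,
\begin{equation*}
\partial_t w_\e=i\omega_\e'w_\e+\tfrac{3i\l}{8}\langle\e D\rangle^{-1}|w_\e|^2w_\e+\tfrac{i\l}{8}\langle\e D\rangle^{-1}\big(e^{2it/\e^2}w_\e^3+3e^{-2it/\e^2}|w_\e|^2\bar w_\e+e^{-4it/\e^2}\bar w_\e^3\big),
\end{equation*}
with $\omega_\e':=\e^{-2}(\langle\e D\rangle-1)=-\Delta/(\langle\e D\rangle+1)=-\tfrac12\Delta+\e^2\widetilde m_\e(D)$, where $\widetilde m_\e(D)$ is a Fourier multiplier whose symbol is $\le C|\xi|^4$ uniformly in $\e$, and $\langle\e D\rangle^{-1}=\mathrm{Id}+\e^2(\cdots)$ with the remainder of order $2$. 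One checks that $G:=2g_0$ solves the limiting profile equation $\partial_tG=-\tfrac i2\Delta G+\tfrac{3i\l}{8}|G|^2G$, $G(0)=\phi-i\psi$ (i.e. \eqref{eq-g-cub} for $g_0=G/2$), and that $w_\e(0)-G(0)=-i(\langle\e D\rangle^{-1}-1)\psi=O(\e^2)$ in every $H^{\sigma}$, $\sigma\le s-2$. By Proposition \ref{prop-Sch-global}, $\|G(t)\|_{H^{\sigma}}\le C_0$ for $\sigma\le s$ and $\|G(t)\|_{L^{\infty}}\le C_0(1+t)^{-3/2}$; this decay, being time–integrable, is essential below.

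\emph{Step 2: the approximate solution.} For part (i) I would work with a refined profile $w_\e^{\rm app}=G+\e^2\chi_\e+\e^2g_1$: here $\e^2\chi_\e$ is the explicit near–identity (normal–form) corrector that exactly cancels the three non-resonant cubic terms above and, being cubic in $w_\e^{\rm app}$, is of size $O((1+t)^{-3})$ in $H^{\sigma}$ for $\sigma\le s-2$; and $g_1$ solves the linearisation of the limit equation about $G$ with source the $\e^{-2}$–rescaled operator error $-\tfrac14\Delta^2G$ together with the time–residual of $\e^2\chi_\e$, a source that is $O(1)$ in $H^{\sigma}$ for $\sigma\le s-4$, so that $\|g_1(t)\|_{H^{\sigma}}\le C_0(1+t)$ by Duhamel. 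For part (ii) one keeps only $G$ and the elementary oscillatory corrector, so the residual of $w_\e^{\rm app}$ is already $O(\e^2)$ in $H^{s-4}$; the loss of accuracy from $\e^2$ to $\e$ and of time horizon from $\e^{-1}$ to $\e^{-1/2}$ is the price of not carrying the first–order corrector at this lower regularity.

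\emph{Step 3: energy estimate and bootstrap.} Put $R_\e:=w_\e-w_\e^{\rm app}$, which solves $\partial_tR_\e=i\omega_\e'R_\e+\mathcal L_G(t)R_\e+\mathcal N(t,R_\e)+\mathcal S_\e(t)$, where $i\omega_\e'$ is skew–adjoint (hence invisible in the $H^{\sigma}$ energy), $\|\mathcal L_G(t)R_\e\|_{H^{\sigma}}\lesssim C_0(1+t)^{-3/2}\|R_\e\|_{H^{\sigma}}$ by tame product estimates ($\sigma>3/2$), $\mathcal N$ gathers the genuinely quadratic and cubic terms in $R_\e$, and $\|\mathcal S_\e(t)\|_{H^{s-8}}\le C_0\e^4(1+t)+C_0\e^2(1+t)^{-3}$ for (i) (the term $\e^4\widetilde m_\e(D)g_1$ is what forces eight derivatives), resp. $\|\mathcal S_\e(t)\|_{H^{s-4}}\le C_0\e^2$ for (ii). Then $\tfrac{d}{dt}\|R_\e\|_{H^{\sigma}}\lesssim C_0(1+t)^{-3/2}\|R_\e\|_{H^{\sigma}}+\|\mathcal N\|_{H^{\sigma}}+\|\mathcal S_\e\|_{H^{\sigma}}$; since $\int_0^{\infty}(1+s)^{-3/2}\,ds<\infty$, Gronwall absorbs the linear term into a bounded multiplicative constant and the source integrates to $\le C_0\e^2(1+t)$ (resp. $\le C_0\e(1+t)$). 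One then runs a continuity/bootstrap argument on the maximal existence interval of $u_\e$: under the hypothesis $\|R_\e(t)\|_{H^{\sigma}}\le 2C_0(1+t)\e^2$ the quadratic–in–$R_\e$ part of $\mathcal N$ stays below $\tfrac12C_0(1+t)\e^2$ while $\e^2(1+t)$ is bounded, and the cubic self–interaction of $R_\e$ does so only for $t\le T_0/\e$; this both improves the bound and, via the standard continuation criterion for the semilinear equation \eqref{0}, gives $u_\e\in C([0,T_0/\e];H^{s-8})$. Part (ii) is identical with $\e^2$ replaced by $\e$ and horizon $t\le\hat T_0/\sqrt\e$.

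\emph{The main obstacle.} The crux is the long–time bookkeeping forced by the fast oscillation $e^{\pm it/\e^2}$: the non-resonant cubic terms are $O(1)$ and must be eliminated by $O(\e^2)$ correctors (equivalently, by integration by parts in time), and each such normal form produces a remainder containing $\partial_tw_\e$, hence two further derivatives — this, stacked on the fourth–order operator error $\e^2\Delta^2$ acting on $G$ and on $g_1$, is exactly what dictates the losses $H^s\to H^{s-8}$ and $H^s\to H^{s-4}$. I expect the most delicate point to be closing the resulting finite hierarchy of $H^{\sigma}$–estimates simultaneously, with all constants independent of $\e$, while tracking the exact powers of $(1+t)$ so that the decay $\|g_0(t)\|_{L^\infty}\lesssim(1+t)^{-3/2}$ is used precisely where a time integral must converge and the quasi–quadratic feedback of the cubic nonlinearity is seen to degrade the estimate only at the time scales $t\sim\e^{-1}$, resp. $t\sim\e^{-1/2}$.
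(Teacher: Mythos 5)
Your proposal takes a genuinely different route from the paper, and the comparison is instructive. The paper converts \eqref{0} into the first-order \emph{symmetric hyperbolic} system \eqref{00} in the variable $U=(\e\nabla u,\e^2\d_t u,u)$ and then builds a WKB expansion $U_a=\sum_{n\le K_a+2}\e^nU_n$ (Sections \ref{sec:wkb}--\ref{sec:reg-wkb}) by solving the algebraic cascade $\Phi_{n,p}=0$; Theorem \ref{thm2}(i) is deduced from the $K_a=2$ stability estimate of Theorem \ref{thm:sta-defo}(i), and part (ii) from $K_a=0$. You instead work with the Masmoudi--Nakanishi profile $w_\e=e^{-it/\e^2}(u_\e-i\omega_\e^{-1}\d_t u_\e)$ (essentially the $v$ discussed in Section \ref{sec:intr-f-gene}), carry out a time normal form to remove the $e^{\pm 2it/\e^2},e^{-4it/\e^2}$ sources, and add a single secular corrector $g_1$. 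The two constructions are in exact correspondence --- your $G$ is $2g_0$, your $\chi_\e$ is the paper's third harmonic $U_{2,3}$ in \eqref{U2-form}, and your $g_1$ is the paper's $g_2$ solving \eqref{wkb-21-2} --- and both exploit the same two mechanisms the paper highlights: the decay $\|g_0(t)\|_{L^\infty}\lesssim(1+t)^{-3/2}$ of Proposition \ref{prop-Sch-global} to make the linearized coefficient time-integrable, and a continuity/bootstrap argument so that no a priori uniform bound on the exact solution is needed. What the profile formulation buys you is economy: the pseudodifferential $i\omega_\e'=\e^{-2}(\langle\e D\rangle-1)$ carries the whole singular fast dynamics, so you do not need the paper's $U_3,U_4$ (the order-$\e^3,\e^4$ tiers of the hierarchy) to reach a residual of size $O(\e^4)$; the losses $H^s\to H^{s-8}$ and $H^s\to H^{s-4}$ come out identical, via $\e^4\widetilde m_\e(D)g_1$ with $\widetilde m_\e$ of order $4$ and $g_1\in H^{s-4}$. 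What the paper's formulation buys is a uniform algebraic scheme (Lemmas \ref{lem-wktr}--\ref{lem-1-pi1} and Proposition \ref{gn=0-n-odd}) that produces approximations of arbitrary order without having to hand-craft correctors each time, which is what makes Theorem \ref{thm4} possible.

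Two points deserve attention. First, in your Step 2 the corrector $\chi_\e$ cannot ``exactly'' cancel the non-resonant cubic terms of the $w_\e$-equation since it is built from $w_\e^{\rm app}$ rather than $w_\e$; the leftover is a time-oscillatory linear-in-$R_\e$ term with coefficient $\sim G^2$, and you should make explicit that this is absorbed into $\mathcal L_G(t)$ with the same $(1+t)^{-3/2}$ bound (the oscillation is harmless for the $H^\sigma$ energy). Second, your accounting for part (ii) is over-cautious in a way that slightly obscures the argument: if, as you state, the residual of $w_\e^{\rm app}=G+\e^2\chi_\e$ is $O(\e^2)$ in $H^{s-4}$ and $R_\e(0)=O(\e^2)$, then Gronwall gives $\|R_\e(t)\|\lesssim\e^2(1+t)$, not $\e(1+t)$, so the conclusion \eqref{stable-defo-thm3-3d} follows a fortiori; by contrast, the paper's $O((1+t)\e)$ bound for part (ii) originates from an $O(1)$ entry of its vector residual $R_\e$ in the auxiliary $w=\e\nabla u$ component (see $A(\nabla)U_{2,3}$ in the $K_a=0$ case), a component your scalar profile formulation simply does not track. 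You should either make precise where a genuine $O(\e)$ term appears in your scheme, or note that your profile estimate is in fact sharper than the stated bound and hence certainly implies it; as written, the sentence ``the loss of accuracy from $\e^2$ to $\e$'' is asserted rather than derived. Neither point is a fatal gap --- the argument as outlined does prove the theorem --- but they are exactly the kind of bookkeeping you flag as delicate, and they should be nailed down.
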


In  \cite{BZ19B} and \cite{SZ20},  a uniform second order approximation was shown by numerical study where the error estimates are of order $O(\e^{4})$.  We can actually give an expansion of the solution $u_{\e}$ up to an arbitrary  order provided the initial data are smooth enough. This is the following theorem:
\begin{theorem}\label{thm4}
 Let $K_{a}\in \Z_{\geq 0} \cap \Z_{\rm even}$. Suppose $(\phi, \psi)$ satisfy \eqref{ini-ass1}--\eqref{ini-ass2} with $s> s_{a}:= 2  K_{a} +4 + \frac{3}{2}$. 
  \begin{itemize}
 \item[(i)] If $\l >0$ and $K_{a} >0$,  then \eqref{0}--\eqref{ini-0}  admits a unique solution $u_{\e}\in C([0,\frac{T_{0}}{\e}]; H^{s-2K_{a}-4}(\R^{3}))$ with for some $T_{0}>0$ independent of $\e$. Moreover, there hold the estimates
\ba\label{stable-defo-thm4-3d}
\|(u_{\e}-u_{a})(t,\cdot)\|_{H^{s-2K_{a}-4}(\R^{3})} &\leq  C_0(1+t)^{K_{a}}\e^{K_{a}+1}, \   \mbox{for all $t \leq \frac{T_{0}}{\e}$}.
\ea
Here $u_{a}$ is of the form
\ba\label{ua-form-thm4}
u_{a} = u_{0} + \e^{2} u_{2} + \cdots + \e^{K_{a}} u_{K_{a}} +  \e^{K_{a}+2} u_{K_{a}+2},
\ea
where
\ba\label{unp-form-thm4}
u_{0} : = e^{i\frac{t}{\e^{2}} } g_{0} + e^{-i \frac{t}{\e^{2}} } \bar g_{0}, \quad u_{n} = \sum_{|p|\leq n+1} e^{ip \frac{t}{\e^{2}} } u_{n,p},
\ea
where $g_{0}$ is the solution to the cubic Schr\"odinger equation \eqref{eq-g-cub} satisfying the estimates in Proposition \ref{prop-Sch-global}, and there hold the following estimates for each $n=2,4,\cdots,K_{a}$, each $p,$ and for all $t\in (0,\infty)$:
\ba\label{est-ua-thm4}
 \|u_{n,p}(t,\cdot)\|_{H^{s-2n}(\R^{3})}  \leq  C_0(1+t)^{n-1} ,  \
\|u_{K_{a}+2,p}(t,\cdot)\|_{H^{s-2K_{a}-2}(\R^{3})}   \leq  C_0(1+t)^{K_{a}-1}.
\ea
\item[(ii)] If $\l <0$,  then Cauchy problem of the cubic Klein-Gordon equation \eqref{0}--\eqref{ini-0} admits a unique solution $u_{\e}\in C([0,T_{\e}^{*}); H^{s-2K_{a}-4}(\R^{3}))$ with the existence time $T_{\e}^{*}$ satisfying
\ba\label{exist-time-thm1}
\liminf_{\e \to 0} T^{*}_{\e} \geq T^{*},
\ea
where $T^{*}$ is the existence time to the Cauchy problem of the cubic Schr\"odinger equation \eqref{eq-g-cub} given in Proposition \ref{prop-Sch-local}.
Moreover, there holds the error estimate
\ba\label{stable-thm1}
\| (u_{\e}-u_{a})(t,\cdot) \|_{H^{s-2K_{a}-4}(\R^{3})} \leq D(t)\e^{K_{a}+1}, \quad   t <\min\{T^{*}, T_{\e}^{*}\}.
\ea
Here $u_{a}$ is of the same form as in \eqref{ua-form-thm4} and \eqref{unp-form-thm4} with the estimates for each  $n=2,4,\cdots,K_{a}$, for each $p$, and for all $t<\min\{T^{*}, T_{\e}^{*}\}$:
\ba\label{est-ua-thm1}
\|g_{0}(t,\cdot)\|_{H^{s}(\R^{3})} + \|u_{n,p}(t,\cdot)\|_{H^{s-2n}(\R^{3})} + \|u_{K_{a}+2,p}(t,\cdot)\|_{H^{s-2K_{a}-2}(\R^{3})}   \leq D  (t).
\ea
In particular, if $K_{a} \geq 2$, we have
\ba\label{stable-thm1-g0}
\| \big(u_{\e}-e^{i \frac{t}{\e^{2}} } g_{0} + e^{-i \frac{t}{\e^{2}} } \bar g_{0}\big)(t,\cdot) \|_{H^{s-2K_{a}-4}(\R^{3})} \leq D (t)\e^{2}, \quad   t <\min\{T^{*}, T_{\e}^{*}\}.
\ea

\end{itemize}
\end{theorem}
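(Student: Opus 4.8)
The plan is to construct the approximate solution $u_a$ by a formal multiscale/WKB-type ansatz, plug it into the Klein-Gordon equation \eqref{0}, and collect powers of $\e$ together with harmonics $e^{ipt/\e^2}$; this produces a hierarchy of equations. The leading order $p=\pm1$, order-$\e^0$ equation forces the profile $g_0$ to solve the cubic Schr\"odinger equation \eqref{eq-g-cub} (this is the classical nonrelativistic-limit computation: writing $\e^2\d_{tt}(e^{it/\e^2}g_0) = -e^{it/\e^2}(\e^{-2}g_0 - 2i\d_t g_0 - \e^2\d_{tt}g_0)$, the singular $\e^{-2}$ term cancels against $\e^{-2}u_\e$, and the $\e^0$ balance is exactly $2i\d_t g_0 = \Delta g_0 - 3\l|g_0|^2 g_0$ after extracting the resonant part of the cubic term $(e^{it/\e^2}g_0 + e^{-it/\e^2}\bar g_0)^3$; the non-resonant harmonics $e^{\pm 3it/\e^2}$ and $e^{\pm it/\e^2}$ with the wrong coefficient are pushed into $u_2$). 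Iterating, for each even $n$ the corrector $u_{n,p}$ is obtained by solving an \emph{algebraic} relation for $p\neq \pm 1$ (division by $1-p^2\neq 0$, losing two derivatives, which explains the $H^{s-2n}$ loss) and a \emph{linear Schr\"odinger equation with source} for $p=\pm1$; the source at stage $n$ is a polynomial in $g_0$ and the previously constructed lower-order correctors and their $x$-derivatives. The final term $\e^{K_a+2}u_{K_a+2}$ is included as a non-sharp corrector so that the residual $r_a := \e^2\d_{tt}u_a - \Delta u_a + \e^{-2}u_a + \l u_a^3$ has size $O(\e^{K_a+3})$ in the relevant Sobolev norm (with the stated time-growth weights).

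Next I would establish the quantitative bounds \eqref{est-ua-thm4}, resp.\ \eqref{est-ua-thm1}. In the defocusing case, by Proposition \ref{prop-Sch-global}, $g_0$ is globally bounded in $H^s$, so each linear-Schr\"odinger corrector equation $2i\d_t u_{n,\pm1} = \Delta u_{n,\pm1} + (\text{source})$ is solved by Duhamel; the source involves $g_0$ (bounded) times lower correctors (which by induction grow like $(1+t)^{n-3}$ in the worst resonant mode), and the time integral produces one extra power $(1+t)$, giving the claimed $(1+t)^{n-1}$ growth. The decay estimate \eqref{est-Sch-global2} is what keeps the \emph{resonant} self-interaction terms from producing faster (exponential) growth — it must be used to control terms like $\int_0^t \||g_0|^2 u_{n-2,\pm1}\|_{H^{s-2n}}$. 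In the focusing case one has only the local solution from Proposition \ref{prop-Sch-local} on $[0,T^*)$, so all constants are allowed to depend continuously on $t<T^*$ and one simply propagates $H^s$ bounds on the fixed interval; there is no sharp time-growth claim, hence the generic $D(t)$.

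Finally I would close the argument by an energy/stability estimate for the error $w_\e := u_\e - u_a$. Writing the equation for $w_\e$: $\e^2\d_{tt}w_\e - \Delta w_\e + \e^{-2}w_\e = -\l(u_\e^3 - u_a^3) - r_a$, I rewrite it (following the standard reduction of Klein-Gordon to a first-order-in-time system via the variable $v_\e = u_\e + i\e\langle\nabla\rangle_\e^{-1}\d_t u_\e$ where $\langle\nabla\rangle_\e = \sqrt{-\Delta + \e^{-2}}$, so that $\|v_\e\|_{H^{s'}}$ controls both $\|u_\e\|_{H^{s'}}$ and $\e\|\d_t u_\e\|_{H^{s'-1}}$). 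A Gr\"onwall argument in $H^{s-2K_a-4}$ then gives $\|w_\e(t)\| \lesssim \int_0^t \|r_a\| + \int_0^t (\text{Lipschitz constant of the cubic})\|w_\e\|$; since $u_a$ (hence $u_\e$) is bounded on the time interval, the cubic is Lipschitz with a constant of size $O(1)$ in the defocusing long-time regime and the Gr\"onwall factor $e^{C\int_0^t(\dots)}$ must be shown to stay bounded — this is where the length of the time interval is dictated: to absorb a Gr\"onwall constant of order $t$ against a residual of order $\e^{K_a+3}(1+t)^{K_a}$ and still land at $(1+t)^{K_a}\e^{K_a+1}$ one needs $t \lesssim \e^{-1}$ (and the $\sqrt\e$ interval in Theorem \ref{thm2}(ii) comes from the weaker $\e^1$ residual when $s$ is only large enough for one corrector). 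The \textbf{main obstacle} is precisely this bootstrap: a naive Gr\"onwall estimate on the second-order equation loses a factor $\e^{-1}$ (from the $\e^2\d_{tt}$ versus $\e^{-2}u_\e$ imbalance) and would destroy the long-time range; one must either work with the $v_\e$-variable whose natural energy is $\e$-uniform, or exploit the oscillatory/normal-form structure so that the dangerous terms are either non-resonant (and gain $\e^2$ after integration by parts in $t$) or are genuine nonlinear-Schr\"odinger resonances already accounted for in $u_a$. Carrying this bookkeeping through all $K_a$ stages while tracking the sharp $(1+t)$ powers, and simultaneously establishing the existence time $T_\e^*$ of $u_\e$ by the same a priori bound (a continuation argument), is the technical heart of the proof.
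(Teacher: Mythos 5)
Your overall roadmap matches the paper's: construct $u_a$ by a WKB/multiscale ansatz (algebraic for $p\neq\pm1$, linear Schr\"odinger for $p=\pm1$, with $g_0$ emerging at order $\e^0$); track the polynomial-in-$t$ growth of the correctors using the $(1+t)^{-3/2}$ decay of $g_0$; pass to a first-order reduction with $\e$-uniformly unitary propagator; close with Duhamel, Gr\"onwall and a continuation argument. The paper uses a $(d+2)$-component symmetric hyperbolic system $U=(\e\nabla u,\e^2\d_t u,u)$ rather than your scalar variable $v_\e=u_\e+i\e\langle\nabla\rangle_\e^{-1}\d_t u_\e$, but these are equivalent and both give the needed $\e$-uniform energy bound.

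There is, however, one genuine gap in your stability step. You assert that ``the cubic is Lipschitz with a constant of size $O(1)$ in the defocusing long-time regime and the Gr\"onwall factor $e^{C\int_0^t(\dots)}$ must be shown to stay bounded.'' If the Lipschitz constant were truly $O(1)$, the Gr\"onwall factor would be $e^{Ct}$, which blows up as $e^{C/\e}$ on the interval $[0,T_0/\e]$ and cannot be ``shown to stay bounded'' by any choice of $T_0$. The hint you give (normal form / nonstationary phase) is not what resolves this. The actual mechanism, which the paper exploits in Section \ref{sec:proof-defo}, is that the linearized operator $w\mapsto u_a^2 w$ has $H^s$-operator norm bounded by $\|u_a\|_{L^\infty}\|u_a\|_{H^s}$ via the product estimate \eqref{Sobolev-mul}, and $\|u_a(t)\|_{L^\infty}\lesssim (1+t)^{-3/2}+\e^2(1+t)\bigl(1+\cdots\bigr)$ by \eqref{est-ua-3d-2}. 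The $(1+t)^{-3/2}$ piece is integrable over $(0,\infty)$, and the $\e^2(1+t)$ piece has bounded integral over $[0,T_0/\e]$ precisely because $t\lesssim\e^{-1}$; so the Gr\"onwall factor is $e^{O(1)}$ uniformly in $\e$. In other words, the same $L^\infty$-decay of $g_0$ that you correctly invoke to bound the correctors $u_{n,p}$ must also be used in the error Gr\"onwall, and omitting it there is not a bookkeeping nuisance but a loss of the entire long-time range. (Your claimed residual size $O(\e^{K_a+3})$ is also off by a couple of powers of $\e$ relative to the paper's $\e^{K_a+1}R_\e$ with $\|R_\e\|\lesssim(1+t)^{K_a-1}$, but this is a minor bookkeeping issue compared to the Gr\"onwall point above.)
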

Recall that $D(t)$ denotes a positive increasing function in $t$.

\medskip

The rest of the paper is devoted to the proof of our main theorems. In the next section we will illustrate the main ideas and novoelties. In the sequel to avoid notation complexity, we will drop the subscript $\e$ in $u_{\e}$ and simply denote the solution of the Klein-Gordon equation by $u.$

\subsection{Main ideas}

The main novelty of our results is to derive specific growth rates in time for the error estimates in the nonrelativistic limit of the Klein-Gordon equations for certain cases. In particular, for the defocusing case with cubic nonlinearity, linear in time growth rates are obtained and are valid up to long time of order $\e^{-1}$ or $\e^{-\frac 12}$. Our idea is to transfer the study of the nonrelativistic limit problem to the stability problem of the WKB approximate solutions in geometric optics, see Section \ref{sec:reform}. The main theorems can be treated as corollaries of the stability results given in Section \ref{sec:reform}.

In the earlier works (see \cite{MN1, Nak02, M, BN, MT} and improved results in \cite{MN2}), only locally uniform in time error estimates are shown. The time dependency of the convergence rates is not clear, even for the defocusing case where the global-in-time energy estimates or even the scattering hold.   In \cite{MN2}, the convergences were derived in the $H^{1}$ energy space induced by the conserved quantities of the nonlinear Klein-Gordon equations, without assuming too much regularity for the initial data, say $H^{3}$ data. While, even assuming sufficient high regularity of initial data, it seems that the method in \cite{MN2} cannot be applied to deduce the error estimates grow at most algebraically in time.  The proof in \cite{MN2} is based on considering the new unknown
$$
v := e^{-\frac{it}{\e^{2}}} \frac{1}{2}\left( u - \frac{i\e }{\langle \nabla \rangle_{\e}} (\d_{t} u)\right), \quad \langle \nabla \rangle_{\e}: = \sqrt{\e^{-2} - \Delta},
$$
 where $u$ is the solution to the nonlinear Klein-Gordon equations and is assumed to be real valued here. Then $u$ can be written as
 $$
 u = e^{\frac{it}{\e^{2}}}  v + e^{-\frac{it}{\e^{2}}}  \bar v.
 $$
Moreover,  $v$ satisfies
 $$
 2i \d_{t} v + 2\e^{-1} (\langle \nabla \rangle_{\e} - \e^{-1}) v + \frac{1}{\e \langle \nabla \rangle_{\e}} \tilde f (v) = 0.
 $$
Here the nonlinear term $ \tilde f (v) = e^{-\frac{it}{\e^{2}}}  f(e^{\frac{it}{\e^{2}}}  v + e^{-\frac{it}{\e^{2}}}  \bar v)$.  With cubic nonlinearity \eqref{form-f}, one has
$$
\tilde f(v) = 3 \l |v|^{2} v + \l (e^{\frac{2it}{\e^{2}}} v^{3} +  2 e^{\frac{-2it}{\e^{2}}} |v|^{2}\bar v + e^{\frac{-4it}{\e^{2}}} \bar v^{3})
$$
With such reformulation, it is the key to show the convergence $v \to g_{0}$ as $\e \to 0$, where $g_{0}$ is the solution to the nonlinear Schr\"odinger equation \eqref{eq-g-cub}.  Observe
$$
2\e^{-1} (\langle \nabla \rangle_{\e} - \e^{-1}) = 2\frac{-\Delta} {\sqrt{1-\e^{2} \Delta} + 1} =  -\Delta + O(\e^{2}).
$$
Using Duhamel formula and employing Strichartz estimates, even with low regularity data, Masmoudi and Nakanishi \cite{MN2} were able to establish the convergence from $v$ to $g_{0}$ in energy space $H^{1}$ as $\e \to 0$ with convergence rate $o(\e^{-\frac 12})$. In their proof, both uniform $H^{1}$ norms of $v$ and $g_{0}$  are needed. The $H^{1}$ norm of $v$ comes from the $H^{1}$ norm of $u$.  However, one cannot obtain the uniform $H^{1}$ bound of $u$ as $\e \to 0$ from the conservation of energy :
\ba
E_{\e}(u(t)) &: = \int_{\R^{d}} \left( |\e \d_{t} u |^{2} + |\nabla u|^{2} + |\e^{-1} u|^{2} + \l |u|^{4} \right) \dx \\
 & =  E_{\e}(u(0)) := \e^{-2} \int_{\R^{d}} \left( |\psi |^{2} + |\phi|^{2} \right)\dx  +\int_{\R^{d}}\left( |\nabla u|^{2} + \l |u|^{4} \right) \dx,
\nn\ea
where one sees $\|u(t, \cdot)\|_{H^{1}}$ may blow up as $\e \to 0$.  With certain additional assumptions on the nonlinearities, for example with defocusing cubic nonlinearity where global well-posedness and scattering holds,  uniform $H^{1}$ bound of $u$ as $\e \to 0$ can be obtained (see Theorem 1.5 in \cite{MN2}):  $\|u(t, \cdot)\|_{H^{1}} \leq D(t)$ for some function $D : [0,\infty) \to  [0,\infty)$. However, the behavior of $D$ as $t\to \infty$ is not clear, no matter how smooth the initial data are.

\medskip

In our proof, we do not need any a priori uniform estimates for the exact solution $u$. We need only the estimates of the solutions $g_{n}$ to the Schr\"odinger equations which are independent of $\e$ and are well studied with rich results.   After the construction of an approximate solution $u_{a}$ via WKB expansion, we turn to study the equation of the perturbation $\dot u : = \e^{-2}(u - u_{a})$.  There are mainly three key ingredients in the derivation of specific linear-in-time growth convergence rates over long time of order $\e^{-1}$ or $\e^{-\frac 12}$:

The first one is to derive specific growth rates in time for the terms in the approximate expansion of $u$, see \eqref{est-ua-thm4}. Each term in the approximate expansion of $u$ can be written in terms of $g_{n}$ being solutions of Schr\"odinger equations---nonlinear for the leading term $g_{0}$ and linear for the correctors $g_{n}, n\geq 1$. Thus, it is crucial to derive the lowest possible growth rates in time for each $g_{n}$.   In $3D$ setting, the cubic defocusing Schr\"odinger equation with regular initial data is globally well-posed with solution $g_{0}\in C([0,\infty);H^{s}(\R^{3})), s\geq 1$; moreover, the higher order Sobolev norms $\|g_{0}(t,\cdot)\|_{H^{s}(\R^{3})}, s >1$ are uniformly bounded with respect to time $t\in (0,\infty).$   The linear Schr\"odinger equations for the higher order terms $g_{n}, n\geq 1$ are of the form
\be
2 i \d_t   g_{n}  - \Delta  g_{n}  = - 3 \l ( g_{0}^{2} \bar g_{n} + 2 |g_{0}|^{2} g_{n}) + l.o.t.,
\ee
where $l.o.t.$ represent the sum of some lower order terms that are functions of $g_{k}, k\leq n-1$. Further observing the leading term $g_{0}$ admits decay estimates $\|g_{0}(t,\cdot)\|_{L^{\infty}(\R^{3})} \leq C_{0} (1+t)^{-\frac 32}$, one deduces
$$
\|(g_{0}^{2} \bar g_{n} + 2 |g_{0}|^{2} g_{n})(t, \cdot)\|_{H^{s}} \leq C \|g_{0}(t, \cdot)\|_{L^{\infty}} \|g_{0}(t, \cdot)\|_{H^{s}} \|g_{n}(t, \cdot)\|_{H^{s}} \leq C_{0}(1+t)^{-\frac 32} \|g_{n}(t, \cdot)\|_{H^{s}}.
$$
The integrability of $(1+t)^{-\frac 32}$ in $t\in (0,\infty)$ ensures $g_{n}$ has at most algebraic growth in time $t$ as $t\to \infty$. More precisely, we can show the higher order Sobolev norms of $g_{n}$ behave like $(1+t)^{n-1}$ as $t\to \infty.$

\medskip

The second one is to observe that the initial perturbation can be as made to be small of an arbitrary order $O(\e^{K_{a}+1})$ for any $K_{a} \in \Z_{+}$ as long as the initial data belong to $H^{s}$ with $s>2K_{a} + 4 + \frac{3}{2}$, please see more explanation in (3) in Section \ref{sec-keys}.

\medskip

The third one is to reveal the perturbed equation in $\dot U:=\e^{-2}(U - U_{a})$ of the form
$$
\d_{t} \dot U + \mathcal{A}_{\e} \dot U = L_{\e} (t) \dot U +  \e^{2}  F_{\e} (\dot U)+ R_{\e},
$$
where $\mathcal{A}_{\e}$ is symmetric hyperbolic with $e^{-i\mathcal{A}_{\e}}$ a unitary from $H^{s}$ to $H^{s}$, $L_{\e}(t)$ is a linear operator and $F_{\e}(\dot U)$ is a polynomial in $\dot U$ of degree $3$. Here the new unknown $U$ contains the information of $u$ and $\e^{2} \d_{t} u$, see \eqref{new-var} and \eqref{def-dotU}. It is crucial to prove that the linear operator $L_{\e}(t)$ enjoys the following specific time growth (see Section \ref{sec:proof-defo}):
$$
\| L_{\e} (t)\|_{\mathcal{L}(H^{s}, H^{s})} \leq C_{0} (1+t)^{-\frac 32} +  C_{0}\e^{2} (1+t) \big(1 + \e (1+t) + \cdots +    \e^{2K_{a}} (1+t)^{2 K_{a}}\big),
$$
where the leading order term is integrable in $t$ over  $(0,+\infty)$. This allows us to employ the classical theory of symmetric hyperbolic equations to deduce the estimate $\|\dot U(t)\|_{H^{s}} \leq C_{0}(1+t)$ over long time of order $\e^{-1}$ and further shows that $ \| (U -U_{a})(t)\|_{H^{s}} \leq C_{0}(1+t)\e^{2}$.

\medskip

This paper is organized as follows. In Section \ref{sec:reform}, we introduce some new unknowns and rewrite the cubic Klein-Gordon equation as a symmetric hyperbolic system and then reformulate the  nonrelativistic  problem into the stability problem of the WKB solutions in nonlinear geometric optics.  In Sections \ref{sec:wkb}, we proceed the WKB expansion and give the specific structure of the WKB solution.  In Section \ref{sec:reg-wkb}, we show the regularity of the components in the WKB solution with specific growth in time estimates, and complete the construction of approximate solutions. In Section \ref{sec:sta-WKB}, we show the stability of the WKB solutions constructed in Section \ref{sec:wkb}. As corollaries of the stability of the WKB solutions,  we finally in Section \ref{sec:sta-KG-S} give the proofs of Theorems \ref{thm2} and \ref{thm4}.

\section{Reformulation via nonlinear geometric optics}\label{sec:reform}

In this section, we reformulate this nonrelativistic limit problem as the stability problem of WKB approximate solutions in nonlinear geometric optics.

\subsection{The equivalent symmetric hyperbolic system}\label{sec:eqhy}

We rewrite the cubic Klein-Gordon equation into a symmetric hyperbolic system by introducing
\be\label{new-var}
U:=(w^{\rm T},v, u)^{\rm T}  :=\left(\e(\d_{x_1}u,\cdots, \d_{x_d} u),\e^2\d_t u,u\right)^{\rm T}, \quad w:=\e\nabla u=\e(\d_{x_1}u,\cdots, \d_{x_d} u)^{\rm T}.
\ee
Then the equation \eqref{0} is equivalent to
\be\label{00}
\d_t U-\frac{1}{\e}A(\nabla)U +\frac{1}{\e^2} A_0 U = F(U),
\ee
where
\be\label{def:AB}
A(\nabla):=\bp 0_{d\times d} &\nabla & 0_d\\ \nabla^{\rm T} &0&0\\ 0_d^{\rm T}&0&0 \ep, \quad A_0:=\bp 0_{d\times d} &0_d & 0_d\\ 0_d^{\rm T}&0&1\\0_d^{\rm T}&-1&0 \ep,\quad F(U)=-\bp 0_d\\f(u)\\0\ep.
\ee
Here the notation $\nabla:= (\d_{x_1},\cdots,\d_{x_d})^{\rm T}$,  $0_{d\times d}$ denotes the zero matrix of order $d\times d$, and  $0_d$ denotes the zero column vector of dimension $d$. When there is no confusion from the context, we sometimes omit the subscript $d$ or $d\times d$ and simply use $0$. With initial datum \eqref{ini-0}, one has
\be\label{ini-data00}
U(0,\cdot)=\left(\e \nabla^{\rm T} \phi , \psi ,\phi \right)^{\rm T}.
\ee

\subsection{WKB solutions and stability}\label{sec:intr-wkb}

We look for an approximate solution to \eqref{00} by using WKB expansion which is a typical technique in geometric optics. The main idea is as follows.

We make a formal power series expansion in $\e$ for the solution and each term in the series is a trigonometric polynomial in $\th:=\frac{t}{\e^2}$:
 \be\label{def-wkb}
 U_a = \sum_{n=0}^{K_a+2} \e^{n} { U}_{n}, \qquad { U}_{n} = \sum_{p \in {\cal H}_n} e^{i p \th} U_{n,p}, \qquad K_a \in \Z_{\geq 0}, \,\, {\cal H}_n \subset \Z.\ee
 As in \eqref{new-var}, hereafter we use the notation for the corresponding components:
\be\label{Un-3n}
U_{a} =  \bp w_a\\ v_a\\ u_a\ep, \quad  U_n = \bp w_n\\ v_n\\ u_n\ep,\quad U_{n,p} = \bp w_{n,p}\\ v_{n,p}\\ u_{n,p}\ep.\nn
\ee

 The amplitudes $U_{n,p}(t,x)$  are not highly-oscillating (independent of $\th$) and satisfies $U_{n,-p}=\overline U_{n,p}$  due to the reality of $U_a$. Here ${\cal H}_n$ is the $n$-th order harmonics set which has finite elements and will be determined later on (see \eqref{Hn-form}) in the construction of $U_a$. The \emph{zero-order} or \emph{fundamental} harmonics set ${\cal H}_0$ is defined as ${\cal H}_0:=\{p\in \Z:~{\rm det}\,(ipI_{d+2}+A_0)=0\}$ with $I_{d+2}$ denoting the $(d+2)\times (d+2)$ identity matrix. For the nonhomogeneous case with $A_0\neq 0$, the set ${\cal H}_0$ is always finite. Indeed, with $A_0$ given in \eqref{def:AB}, we have ${\cal H}_0=\{-1,0,1\}.$ Higher order harmonics are generated by the fundamental harmonics and the nonlinearity of the system. In general  there holds the inclusion ${\cal H}_{n}\subset {\cal H}_{n+1}$ and ${\cal H}_{n+1}$ is generated by ${\cal H}_{n}$ through the nonlinearity of the equation. This will be more clear later in the WKB cascade in Section \ref{sec:wkb}.

\medskip

We plug \eqref{def-wkb} into \eqref{00} and deduce the system of order $O(\e^n)$:
\ba\label{phi-n-p}
\Phi_{n,p}:=\d_t U_{n,p}- A(\nabla) U_{n+1,p} +\big(ipI_{d+2} +A_0\big) U_{n+2,p} - F(U_a)_{n,p}=0,
\ea
for any {$n\in \Z,~n\geq -2$ and  $p\in \Z$}. Here
\ba\label{Fnp}
F(U_{a})_{n,p}:= \bp 0_d \\ - f(u_{a})_{n,p} \\ 0 \ep, \quad f(u_{a})_{n,p}: = \l  \sum_{n_{1}+ n_{2} + n_{3} = n}\; \sum_{p_{1}+p_{2} + p_{3} = p} u_{n_{1},p_{1}} u_{n_{3},p_{1}} u_{n_{3},p_{1}}.
\ea
Naturally, we always impose $U_{n}=0$ for $n = -2, -1$. We can solve \eqref{00} approximately by solving $\Phi_{n,p}=0$  up to some nonnegative order ${K_a}$, such that  $U_a$ solves \eqref{00} with a remainder of order $O(\e^{K_a+1})$ which is small and  goes to zero in the limit $\e \to 0$. Such $U_{a}$ is called a \emph{WKB approximate solution} or \emph{WKB solution} for short. Without loss of generality we shall choose $K_{a}\in \Z_{\rm even}$.

\medskip

We show that a WKB approximate solution up to an arbitrary order can be constructed. 
\begin{theorem}\label{thm:app-defo}
Let $K_{a} \in \Z_{\geq 0}\cap \Z_{\rm even}$. Suppose $(\phi, \psi)$ satisfy \eqref{ini-ass1}--\eqref{ini-ass2} with $s> s_{a}:= 2  K_{a} +4 + \frac{3}{2}$.  Let $T^{*}$ be the existence time to the Cauchy problem of the cubic Schr\"odinger equation \eqref{eq-g-cub} given in Proposition \ref{prop-Sch-local}. 
\begin{itemize}

\item[(i)] If $\l>0$, then $T^{*} = \infty$ and  there exists a WKB solution $U_{a}$ of the form \eqref{def-wkb} with $U_{n,p} \in C([0,\infty); H^{s-2n}(\R^{3}))$ satisfying the estimates in Corollary \ref {prop-reg-Un-defo}  for each $n=0,1,2, \cdots, K_{a}+2$ and each $p\in \calH_{n}$ given in \eqref{Hn-form}.   Moreover $U_{a}$ satisfies 
\be\label{eq-WKB-fo}
\left\{ \begin{aligned}
&\d_t U_a  - \frac{1}{\e}A(\nabla) U_a  +\frac{1}{\e^2} A_0 U_a  = F(U_a ) - \e^{K_{a}+1} R_\e ,\\
& U_a (0,\cdot) = U(0,\cdot)-\e^{K_{a}+2} r_\e ,\end{aligned}\right.
\ee
 for all $t\in (0,\infty)$ with
\be\label{est-remainders-defo}
\|R_{\e} (t,\cdot)\|_{ H^{s-2 K_{a}-4}} \leq  C_0(1+t) ^{K_{a}-1},   \quad \|r_{\e} \|_{ H^{s-2 K_{a}-2}} \leq  C_{0}.
\ee
  
 \item[(ii)] If $\l<0$, then there exists a WKB solution $U_a$ of the form \eqref{def-wkb} with $U_{n,p} \in C([0,T^{*}); H^{s-2n}(\R^{3}))$  for each $n=0,1,2, \cdots, K_{a}+2$ and each $p\in \calH_{n}$ given in \eqref{Hn-form}.  Moreover, $U_{a}$ solves \eqref{eq-WKB-fo} for all $t\in (0,T^{*})$ with
\be\label{est-remainders-fo}
\|R_{\e}(t,\cdot)\|_{H^{s-2 K_{a}-4}} \leq D(t), \quad \|r_{\e}\|_{ H^{s-2 K_{a}-2}} \leq C_{0}.
\ee

 \end{itemize}
\end{theorem}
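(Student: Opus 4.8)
The plan is to construct the WKB solution by solving the cascade of equations $\Phi_{n,p}=0$ from \eqref{phi-n-p} order by order in $n$, and then to track carefully the regularity and the time-growth of each amplitude $U_{n,p}$. First I would analyze the algebraic structure of \eqref{phi-n-p}. For fixed $n\geq 0$ and $p\in\Z$, the principal unknown in $\Phi_{n,p}=0$ at this stage should be $U_{n,p}$, while $U_{n+2,p}$ and $U_{n+1,p}$ appear via the operators $(ipI_{d+2}+A_0)$ and $A(\nabla)$. The decisive dichotomy is whether $p\in\calH_0=\{-1,0,1\}$ (so that $ipI_{d+2}+A_0$ is \emph{singular}) or $p\notin\calH_0$ (so that it is \emph{invertible}). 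In the non-resonant case one simply solves algebraically for $U_{n+2,p}$ in terms of already-known lower-order data — this is how the harmonics sets $\calH_n$ grow and why $\calH_{n+1}$ is generated from $\calH_n$ through the cubic nonlinearity; I would record the explicit recursion for $\calH_n$ here (this is the forward reference \eqref{Hn-form}). In the resonant case $p\in\{-1,0,1\}$, projecting onto $\ker(ipI_{d+2}+A_0)$ kills the $U_{n+2,p}$ term and yields an evolution equation for (the relevant component of) $U_{n,p}$; projecting onto the complementary subspace via the partial inverse of $(ipI_{d+2}+A_0)$ determines the rest of $U_{n+2,p}$. For $n=0$, $p=\pm1$ this projection is exactly what produces the cubic nonlinear Schrödinger equation \eqref{eq-g-cub} for $g_0$ (after identifying $U_{0,1}$ with $g_0$ and $U_{0,-1}=\bar g_0$, and $U_{0,0}=0$); for $n\geq1$ it produces the \emph{linear} Schrödinger equations for the correctors $g_n$ with source terms that are polynomial expressions in the $g_k$, $k\leq n-1$, plus lower-order-in-$\e$ contributions — and these linear equations are solvable on the whole existence interval of $g_0$ by standard linear theory, losing at most finitely many derivatives at each step.

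Second, with the solvability scheme in place, I would establish the regularity and growth estimates, i.e. the content of the referenced Corollary \ref{prop-reg-Un-defo} and of \eqref{est-ua-thm4}. For $\l>0$ one invokes Proposition \ref{prop-Sch-global}: $T^*=\infty$, $\|g_0(t)\|_{H^s}\leq C_0$ uniformly, and crucially the decay $\|g_0(t)\|_{L^\infty}\leq C_0(1+t)^{-3/2}$. Then for each corrector $g_n$ the linear Schrödinger equation has the schematic form $2i\partial_t g_n-\Delta g_n=-3\lambda(g_0^2\bar g_n+2|g_0|^2 g_n)+\text{l.o.t.}$, and an energy estimate in $H^{s-2n}$ combined with $\|g_0\|_{L^\infty}\in L^1_t$ and an inductive bound $\|g_k(t)\|\lesssim C_0(1+t)^{k-1}$ for $k<n$ gives, via Grönwall, $\|g_n(t)\|_{H^{s-2n}}\leq C_0(1+t)^{n-1}$; the integrability of $(1+t)^{-3/2}$ is exactly what prevents exponential growth and pins down the polynomial rate. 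The algebraically-determined amplitudes (the non-resonant $U_{n,p}$ and the "extra" components solved through the partial inverse) inherit their growth from the $g_k$ they are built from, with a derivative loss of two per unit increase in $n$. For $\l<0$ one instead works on $[0,T^*)$ with $g_0\in C([0,T^*);H^s)$ from Proposition \ref{prop-Sch-local}; no decay is available, so one only obtains bounds of the form $D(t)$, a continuous increasing function blowing up at $T^*$, which is precisely \eqref{est-remainders-fo}.

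Third, I would assemble $U_a=\sum_{n=0}^{K_a+2}\e^n U_n$ and verify \eqref{eq-WKB-fo}. By construction $\Phi_{n,p}=0$ for all $n\leq K_a$ and all $p\in\calH_n$, so plugging $U_a$ into \eqref{00} leaves only the terms of order $\e^{n}$ with $n\geq K_a+1$: these come from the truncation, i.e. from the $A(\nabla)U_{n+1,p}$ and $(ipI+A_0)U_{n+2,p}$ pieces that reach beyond the last retained index, together with the portion of $F(U_a)$ of degree $>K_a$ in $\e$. Collecting them and factoring out $\e^{K_a+1}$ defines $R_\e$; one checks it is a finite sum of products of the $U_{n,p}$ with $n\leq K_a+2$, differentiated at most once, whence the bound $\|R_\e(t)\|_{H^{s-2K_a-4}}\leq C_0(1+t)^{K_a-1}$ follows from \eqref{est-ua-thm4} and the algebra of $H^{s'}$ for $s'>3/2$. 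For the initial data, $U_a(0,\cdot)$ differs from $U(0,\cdot)=(\e\nabla^{\mathrm T}\phi,\psi,\phi)^{\mathrm T}$ only through the amplitudes $U_{n,p}(0,\cdot)$ with $n\geq 1$, whose initial values are determined by $(\phi,\psi)$ through the cascade and bounded in $H^{s-2K_a-2}$ by $C_0$; collecting these into $\e^{K_a+2}r_\e$ gives $\|r_\e\|_{H^{s-2K_a-2}}\leq C_0$ — note the data $(\phi,\psi)\in H^s$ with $s>2K_a+4+\tfrac32$ is exactly enough to absorb all the derivative losses in $r_\e$ and $R_\e$.

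I expect the main obstacle to be the bookkeeping in the resonant projection for general $n\geq1$: one must show that whenever one projects $\Phi_{n,p}=0$ with $p\in\{-1,0,1\}$, the source term is a \emph{closed} expression in the previously constructed $g_k$ (and the non-resonant amplitudes, which are themselves closed in the $g_k$), so that the induction actually goes through and the linear Schrödinger equations for $g_n$ have the stated structure; one also has to verify that $U_{0,0}=0$ and more generally identify which components of each $U_{n,p}$ are "free" (governed by a Schrödinger equation) versus "slaved" (given by the partial inverse of $ipI+A_0$), and confirm the harmonics set really is finite at every order with the recursion \eqref{Hn-form}. The growth-rate propagation for $\l>0$ is conceptually the heart of the matter but is a routine Grönwall argument once the $L^1_t$-integrability of $\|g_0(t)\|_{L^\infty}$ is in hand; the $\l<0$ case is strictly easier since one only needs qualitative $D(t)$ bounds on $[0,T^*)$.
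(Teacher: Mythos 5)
Your outline follows the paper's strategy almost exactly: WKB cascade in powers of $\e$, resonant/non-resonant dichotomy via $L_p=ipI_{d+2}+A_0$ and its partial inverse, the cubic NLS for $g_0$ from the $\Pi_1$-projection at order $\e^0$, linear Schr\"odinger equations for the correctors $g_n$, Gr\"onwall using the $L^1_t$-integrability of $\|g_0(t)\|_{L^\infty}\lesssim(1+t)^{-3/2}$ to pin down the polynomial growth $(1+t)^{n-1}$ of $g_n$, and the identification of $R_\e$ with the truncation residual. All of this is in line with the paper's Sections~3--4.

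However, there is a genuine gap in your treatment of $r_\e$. You write that $U_a(0,\cdot)$ differs from $U(0,\cdot)$ through the amplitudes $U_{n,p}(0,\cdot)$ with $n\geq1$ and that one may simply ``collect these into $\e^{K_a+2}r_\e$'' to obtain $\|r_\e\|_{H^{s-2K_a-2}}\leq C_0$. As stated this does not work: if $U_n(0,\cdot)\neq0$ for some $n$ with $1\leq n\leq K_a+1$, then $U_a(0,\cdot)-U(0,\cdot)$ carries a contribution of order $\e^n$ and $r_\e=\e^{-(K_a+2)}\big(U_a(0,\cdot)-U(0,\cdot)\big)$ blows up as $\e\to0$. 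The cascade only fixes the \emph{evolution equations} for the $g_n$, not their initial data, so ``determined by $(\phi,\psi)$ through the cascade'' is misleading. What is actually needed, and what the paper supplies (Proposition~\ref{ini-Un=0}, anticipated in observation~(3) of Section~\ref{sec-keys}), is a precise choice of the Cauchy data $g_n(0,\cdot)$ at each even $n$ (and $g_n(0,\cdot)=0$ for odd $n$) so that $U_n(0,\cdot)=0$ for every $n=2,\ldots,K_a+1$. Verifying that this system of algebraic conditions on $g_n(0,\cdot)$ is solvable at every step — and that once $U_n(0,\cdot)=0$ is arranged for even $n$ the next odd level $U_{n+1}(0,\cdot)=0$ follows automatically — is an inductive argument using the explicit formulas in Proposition~\ref{gn=0-n-odd}. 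Only then is the initial perturbation $U_a(0,\cdot)-U(0,\cdot)=\e^{K_a+2}U_{K_a+2}(0,\cdot)$, which gives the $\e$-independent bound on $r_\e$. Relatedly, you do not note the structural simplifications the paper uses to make this bookkeeping tractable: only odd harmonics occur (Proposition~\ref{prop-p-odd}), and $g_n\equiv0$ for odd $n$ (Proposition~\ref{gn=0-n-odd}). Without these, the ``careful choice of initial data'' step would be much harder to close.
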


We show the WKB approximate solutions given in Theorem \ref{thm:app-defo} are stable:
\begin{theorem}\label{thm:sta-defo}  Let $K_{a} \in \Z_{+}\cap \Z_{\rm even}$. Suppose $(\phi, \psi)$ satisfy \eqref{ini-ass1}--\eqref{ini-ass2} with $s> s_{a}:= 2  K_{a} +4 + \frac{3}{2}$.  Let $U$ be the unique solution up to certain existence time of the Cauchy problem of the cubic Klein-Gordon equation \eqref{00}--\eqref{ini-data00}.

\begin{itemize}

\item[(i)] If $\l>0$ and $K_{a}>0$, then $U\in C([0,\frac{T_{0}}{\e}]; H^{s-2K_{a}-4}(\R^{3}))$ for some $T_{0}>0$ independent of $\e$ and the WKB solution $U_{a}$ given in Theorem \ref{thm:app-defo} is stable in the following sense:
\ba\label{stable-defo-3d}
\|(U-U_{a})(t,\cdot)\|_{H^{s-2K_{a}-4}(\R^{3})} &\leq  C_0(1+t) ^{K_{a}}\e^{K_{a}+1}, \  \mbox{for all $t \leq \frac{T_{0}}{\e}$}.
\ea
\item[(ii)] If $\l>0$ and $K_{a} = 0$, then  $U\in C([0,\frac{\hat T_{0}}{\sqrt \e}]; H^{s-4}(\R^{3}))$ for some $\hat T_{0}>0$ independent of $\e$ and the WKB solution $U_{a}$ given in Theorem \ref{thm:app-defo} with $K_{a} = 0$ is stable in the following sense:
\ba\label{stable-defo-3d-0}
\|(U-U_{a})(t,\cdot)\|_{H^{s-4}(\R^{3})} &\leq  C_0(1+t) \e, \  \mbox{for all $t \leq \frac{\hat T_{0}}{\sqrt\e}$}.
\ea

\item[(iii)] If $\l<0$, then $U\in C([0,T_{\e}^{*}); H^{s-2K_{a}-4}(\R^{3}))$ with the existence time satisfying
\ba\label{exist-time-fo}
\liminf_{\e \to 0} T^{*}_{\e} \geq T^{*}.
\ea
Moreover, the WKB solution $U_{a}$ given in Theorem \ref{thm:app-defo} is stable in the following sense:
\ba\label{stable-fo}
\|(U-U_{a})(t,\cdot)\|_{H^{s-2K_{a}-4}(\R^{3})} \leq D(t)\e^{K_{a}+1}, \quad  \mbox{for all $t <\min\{T^{*}, T_{\e}^{*}\}.$}
\ea

\end{itemize}
\end{theorem}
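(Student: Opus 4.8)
The plan is to translate the stability statement into a long-time energy estimate for the rescaled perturbation $\dot U := \e^{-2}(U-U_a)$, exploiting the time-decay structure built into the WKB solution by Theorem \ref{thm:app-defo}. First I would subtract \eqref{eq-WKB-fo} from \eqref{00}; writing $\mathcal A_\e := -\tfrac1\e A(\nabla) + \tfrac1{\e^2}A_0$ and using $f(u)=\l u^3$ with $u-u_a=\e^2\dot u$ to expand $f(u)-f(u_a)=3\l u_a^2(u-u_a)+3\l u_a(u-u_a)^2+\l(u-u_a)^3$, one obtains
\[
\d_t\dot U + \mathcal A_\e\dot U = L_\e(t)\dot U + \e^2 F_\e(t,\dot U) + \e^{K_a-1}R_\e,\qquad \dot U(0,\cdot)=\e^{K_a}r_\e,
\]
where $L_\e(t)\dot U$ is the linear map whose only nonzero component is $-3\l\,u_a^2\,\dot u$ in the $v$-slot, $F_\e(t,\dot U)$ is the polynomial of degree three in $\dot U$ with $v$-component $-(3\l u_a\dot u^2+\l\e^2\dot u^3)$, and $R_\e,r_\e$ are the WKB remainders from Theorem \ref{thm:app-defo}, obeying $\|R_\e(t,\cdot)\|_{H^\sigma}\le C_0(1+t)^{K_a-1}$ (resp. $D(t)$ when $\l<0$) and $\|r_\e\|_{H^\sigma}\le C_0$, with $\sigma:=s-2K_a-4>\tfrac32$. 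Since $A(\nabla)$ has skew-Hermitian Fourier symbol and $A_0$ is real skew-symmetric, $\mathcal A_\e$ is a Fourier multiplier with skew-Hermitian symbol, so its propagator is unitary on every $H^\sigma(\R^3)$; this keeps the energy estimate lossless despite the singular $\e^{-1},\e^{-2}$ coefficients.

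The heart of the matter is the size of $L_\e(t)$. Since $u_a=u_0+\e^2u_2+\cdots$ with $u_0=e^{i\th}g_0+e^{-i\th}\bar g_0$, we have $u_a^2=u_0^2+O(\e^2)$; using $\|vw\|_{H^\sigma}\lesssim\|v\|_{L^\infty}\|w\|_{H^\sigma}+\|v\|_{H^\sigma}\|w\|_{L^\infty}$ for $\sigma>\tfrac32$, the decay estimate $\|u_0(t)\|_{L^\infty}\le C_0(1+t)^{-3/2}$ from \eqref{est-Sch-global2}, the uniform bound $\|u_0(t)\|_{H^\sigma}\le C_0$, and the algebraic time-growth bounds on the correctors $u_{n,p}$ from Corollary \ref{prop-reg-Un-defo}, one gets in the defocusing case
\[
\|L_\e(t)\|_{\mathcal L(H^\sigma,H^\sigma)}\le C_0(1+t)^{-3/2}+C_0\e^2(1+t)\big(1+\e(1+t)+\cdots+\e^{2K_a}(1+t)^{2K_a}\big).
\]
The leading term is integrable on $(0,\infty)$, and each term of the correction, integrated over $t\in[0,T_0/\e]$, is bounded by a constant depending only on $T_0$ and $K_a$; hence $\int_0^t\|L_\e(\tau)\|\,d\tau\le C_1(T_0,K_a)$ uniformly in $\e$ and in $t\le T_0/\e$. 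For $\l<0$ one only has $\|L_\e(t)\|_{\mathcal L(H^\sigma,H^\sigma)}\le D(t)$, which is finite on compact subintervals of $[0,T^*)$ and hence harmless there.

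With these in hand I would close the proof by a continuation argument. Local existence of $U$ and the $H^\sigma$-continuation criterion for the semilinear symmetric hyperbolic system \eqref{00} reduce everything to an a priori bound on $\|\dot U(\cdot)\|_{H^\sigma}$, since $U=U_a+\e^2\dot U$ stays bounded in $H^\sigma$, $\sigma>\tfrac32$, as long as $\dot U$ does. On a maximal subinterval of $[0,T_0/\e]$ on which the bootstrap hypothesis $\|\dot U(\tau)\|_{H^\sigma}\le M\,\e^{K_a-1}(1+\tau)^{K_a}$ holds (with $MD(\tau)$ in place of the right side when $\l<0$), Duhamel's formula with the isometric propagator and $\int_0^t\|L_\e\|\le C_1$ give
\[
\|\dot U(t)\|_{H^\sigma}\le C_1\Big(\|\dot U(0)\|_{H^\sigma}+\int_0^t\e^2\|F_\e(\tau,\dot U(\tau))\|_{H^\sigma}\,d\tau+\int_0^t\e^{K_a-1}\|R_\e(\tau)\|_{H^\sigma}\,d\tau\Big).
\]
The remainder integral is $\le C_0\e^{K_a-1}(1+t)^{K_a}$ and the initial term $\le C_0\e^{K_a}$, both $\le\tfrac14 M\e^{K_a-1}(1+t)^{K_a}$ once $M$ is fixed large depending on $C_0,C_1$. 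The bootstrap hypothesis and $\|u_a\|_{L^\infty\cap H^\sigma}\le C_0$ bound $\e^2\|F_\e(\tau,\dot U(\tau))\|_{H^\sigma}$ by $C_0M^2\e^{2K_a}(1+\tau)^{2K_a}+CM^3\e^{3K_a+1}(1+\tau)^{3K_a}$; integrating and comparing to the target, the first term contributes a factor $\sim C_0M(\e(1+t))^{K_a+1}$ and the second $\sim CM^2\e(\e(1+t))^{2K_a+1}$ — each $\le\tfrac14$ of the target provided $\e(1+t)\le 2T_0$ with $T_0$ small (for $K_a=0$ the quadratic term instead yields $\sim C_0M\,\e(1+t)^2$, forcing the shorter horizon $\hat T_0/\sqrt\e$). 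The bootstrap bound then improves to $\le\tfrac12 M\,\e^{K_a-1}(1+t)^{K_a}$, closing the continuation up to $t=T_0/\e$ (resp. $\hat T_0/\sqrt\e$); undoing the scaling gives $\|(U-U_a)(t,\cdot)\|_{H^\sigma}=\e^2\|\dot U(t,\cdot)\|_{H^\sigma}\le M\,\e^{K_a+1}(1+t)^{K_a}$, i.e. \eqref{stable-defo-3d} and \eqref{stable-defo-3d-0}. For $\l<0$ the same scheme run on $[0,T^*-\de]$ for arbitrary $\de>0$ produces \eqref{stable-fo} and, through the continuation criterion, $\liminf_{\e\to0}T^*_\e\ge T^*-\de$, hence \eqref{exist-time-fo}.

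The main obstacle is the bookkeeping that makes the long-time estimate close: one must verify that every monomial produced by the nonlinearity $\e^2F_\e$, after the bootstrap substitution and integration up to $t=T_0/\e$, carries a strictly positive power of the small quantity $\e(1+t)$ (so it is absorbed by shrinking $T_0$), and that $L_\e$ contributes only the integrable leading term $C_0(1+t)^{-3/2}$ — which is exactly where the $L^\infty$-decay of $g_0$ and the sharp $(1+t)^{n-1}$ growth of the correctors are both indispensable — plus corrections that remain $O(1)$ after integration to $\e^{-1}$. Once the bound $\int_0^t\|L_\e\|\le C_1$ and the nonlinear smallness are secured, the symmetric-hyperbolic energy method and the continuation argument are routine.
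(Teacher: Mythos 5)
Your proposal follows essentially the same route as the paper: pass to a rescaled perturbation, exploit the unitary propagator of $-\tfrac1\e A(\nabla)+\tfrac1{\e^2}A_0$ in Duhamel's formula, isolate the linear term $3\lambda u_a^2\dot u$ and bound its operator norm using the $(1+t)^{-3/2}$ decay of $g_0$ plus the algebraic growth of the correctors, and close by Gronwall together with a continuation/bootstrap argument over the long interval. The only departure is cosmetic: you normalize by $\e^{-2}$ whereas the paper sets $\dot U:=\e^{-K_a-1}(U-U_a)$, and you record the linearization as a separate operator $L_\e(t)$ whereas the paper keeps the cubic polynomial $\lambda(3u_a^2\dot u+3\e^{K_a+1}u_a\dot u^2+\e^{2(K_a+1)}\dot u^3)$ intact; both choices lead to the identical estimates.

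One small correction: for $K_a=0$, the literal bootstrap hypothesis $\|\dot U(\tau)\|\le M\e^{K_a-1}(1+\tau)^{K_a}=M\e^{-1}$ and the stated conclusion $M\e^{K_a+1}(1+t)^{K_a}=M\e$ omit the $(1+t)$ factor and would fail against the remainder integral $\int_0^t\e^{-1}\|R_\e\|\,d\tau\sim\e^{-1}t$. Your parenthetical ratio $\sim C_0M\e(1+t)^2$ for the quadratic term implicitly uses the correct hypothesis $\|\dot U(\tau)\|\le M\e^{-1}(1+\tau)$, matching \eqref{stable-defo-3d-0} after rescaling, so the argument is sound; the displayed exponent formula simply needs $(1+\tau)^{\max\{K_a,1\}}$ rather than $(1+\tau)^{K_a}$.
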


\subsection{Some key observations}\label{sec-keys}

Before proving our main theorems, we give some key observations on the WKB construction which play a crucial role in showing the linear in time growth rate in the error estimates of order $\e^{2}$.

\begin{itemize}

\item[(1)] The Klein-Gordon operator satisfies the so called \emph{weak transparency conditions} following the terminology by Joly-M\'etivier-Rauch \cite{JMR2}, which are exactly the conditions \eqref{wktr-1} in Lemma \ref{lem-wktr} in our setting. Such weak transparency conditions ensure the WKB expansion can be proceeded up to an arbitrary order. Weak transparency conditions arise typically in the WKB expansion of the second most singular order.  In our setting, the most singular order is $O(\e^{-2})$ and the weak transparency conditions appear at order $O(\e^{-1})$ in the WKB construction.  We shall point out that in the present study the nonlinearity is of order $O(1)$ and is not involved in the weak transparency conditions. Essentially, the WKB construction here can be done up to an arbitrary order as long as the nonlinearity $f(u)$ is sufficiently smooth in $u$.  

\item[(2)] The amplitudes $U_{n,p}$ in the WKB expansion are determined by the solutions $g_{n}$ to the (nonlinear) Schr\"odinger equations,  see Propositions \ref{prop-Un1} and \ref{gn=0-n-odd}. In particular, the leading nonlinear  Schr\"odinger equation is exactly the cubic Schr\"odinger equation \eqref{eq-g-cub}, while the others are actually linear Schr\"odinger equations. As a result, the life span of the WKB solution is determined by the life span of the solution to the cubic Schr\"odinger equation \eqref{eq-g-cub}. Hence, in the defocusing case $\l>0$, a global-in-time WKB solution can be constructed.

\item[(3)] Not only the WKB expansion can be proceeded up to an arbitrary order, we also arrived at an initial perturbation $(U-U_{a})(0,\cdot)$ to be small of an arbitrary order, that is $\e^{K_{a}+1}$, which is necessary to obtain the higher order error estimates. This is in general not true for ill prepared data which is our case with $\phi$ and $\psi$ independent of $\e$. Here, imposing $g_{n}(0,\cdot) = 0$ for odd $n$ and with a careful choice of the initial data for $g_{n}$ with $n$ even,  we are able to erase the initial values of higher order harmonics generated by the nonlinearity. Moreover, we observed induction relations for $U_{n,p}$ that ensure that once the initial values for $U_{n}(0, \cdot) = 0$ for some even $n\geq 2$, then automatically $U_{n+1}(0,\cdot) = 0$ holds. See Proposition \ref{ini-Un=0}.

\item[(4)] With an approximate solution $U_{a}$ satisfying \eqref{eq-WKB-fo}, it is rather classical to obtain an error estimate of the form $D(t) \e^{K_{a}+1}$ with $D(t) $ an increasing function in $t$. While in general, $D(t)$  either blows up as $t\to T^{*}$ in the focusing case or grows exponentially in $t$ for the defocusing case where the approximate solution exists globally. To have linear growth rate $(1+t)$ for $D(t)$, it is crucial not only to use the decay in time estimates of $g_{0}$ which is the solution to the cubic Schr\"odinger equation, but also to derive the specific growth rates for the higher order perturbations $g_{n}, n\geq 1$ which are the solutions to linear Schr\"odinger equations. See Proposition \ref{prop-reg-gn-defo}.   

\item[(5)] Similar results (global-in-time WKB solutions, long time approximation, convergence rates estimates) hold for the linear case $\l = 0$ as the defocusing case $\l >0$. The proofs follow step by step those of the defocusing case.

\end{itemize}

We further give a remark concerning the choice of nonlinearities. 
\begin{remark}\label{rem-nonlinear}
In this paper we used essentially four properties coming from the cubic structure $f(u) = \lambda u^{3}$:
\begin{itemize} 
\item[(i).] The smoothness of $f(u)$ in $u$. This ensures 
$$\|f^{(n)}(u)\|_{H^{s}} \leq C (\|u\|_{H^{s}})\|u\|_{H^{s}} , \quad s>\frac{3}{2}$$
 where $f^{(n)}$ is the $n$-th derivative of $f$. 
\item[(ii).] In the defocusing case, the solution $g_{0}$ to the cubic NLS enjoys decay-time estimates: 
$$\|g_{0}(t)\|_{L^{\infty}} \leq C (1+t)^{-\frac{3}{2}}.$$
\item[(iii).] The linearized operator $f'(g_{0}) \sim g_{0}^{2}$ decays like $t^{-\frac{3}{2}}$, that is 
\ba\| f'(g_{0}) u\|_{H^{s}}  \leq C \big(\| f'(g_{0})\|_{L^{\infty}} \|u\|_{H^{s}} + \| f'(g_{0})\|_{H^{s}} \|u\|_{L^{\infty}} \big) \leq C (1+t) \|u\|_{H^{s}}, \quad s>\frac 32.
\nn\ea
\item[(iv).] The structure allows us to have a WKB solution coinciding initially to the exact solution up to an arbitrary high order. 
\end{itemize}
As long as the above three properties are satisfied for certain nonlinearity, similar results can be shown following the arguments in the present paper.

\end{remark}

We finally give a remark which gives some connections between the present study and the diffractive optics. 
\begin{remark}\label{rem-dif}
In nonlinear geometric optics, weak transparency conditions imply that the usual geometric optics scheme applied to a nonlinear equation leads to a linear transport equation with speed of propagation $\nabla\omega(k)$ where $k$ is the spatial wave number and $(\omega, k)$ lies in the characteristic of the hyperbolic operator. One way to reach the nonlinear regime is to work on long time interval and introduce a slow time variable $\tau = \e t$ and one expects a nonlinear Schr\"odinger equation to describe the amplitude of the leading term in the WKB solution, where the Schr\"odinger operator takes the form $$  \frac{i}{2} \nabla^{2} w(k) (\nabla_{x} , \nabla_{x}) =   \frac{i}{2}  \sum_{p,q=1}^{3} \d_{k_{p} k_{q}} \omega(k) \d_{x_{p}x_{q}}. $$
We refer to the classical paper \cite{DJMR96} and review paper \cite{Dumas1}.  In the present study, the initial spatial wave number is chosen $k = 0$ and the Klein-Gordon characteristic is given by $\o(\xi) =  (1+ |\xi|^{2})^{\frac 12}$ which is smooth in $\xi\in \R^{3}$. Consequently, the speed of propagation in the linear transport equation is $(\nabla_{\xi} \omega)(0) $ which is zero. This actually corresponds to the weak transparency condition in Lemma \ref{lem-wktr}. Moreover, the Schr\"odinger operator becomes
$$
 \frac{i}{2}  \sum_{i,j=1} (\d_{\xi_{i} \xi_{j}} \omega)(0) \d_{x_{i}x_{j}} =  \frac{i}{2}\Delta_{x} .
$$
This corresponds to the algebraic Lemma \ref{lem-2ndAlg}.

\end{remark}


\section{WKB cascade and approximate solutions}\label{sec:wkb}
We carry out the ideas of Section \ref{sec-keys} to construct approximate solutions by WKB expansion. We start with orders $\e^{-2}$ to $\e^{2}$, and then proceed with the general case $\e^{n}$ by induction.

\subsection{Some notations and useful lemmas in geometric optics}
Let $p\in \Z$ and introduce $$L_{p} : = (ipI_{d+2}+A_0),\quad \mbox{for each $p\in \Z$}.$$
Let $\Pi_{p}$ be the orthogonal projection onto $\ker L_{p}$:
\be\label{wkb--2-Pi-2}
\Pi_{p} L_{p} = L_{p}\Pi_{p} = 0.
\nn\ee

If $L_{p}$ is invertible, let $L^{-1}_{p}$ be the inverse of $L_{p}$. If $L_{p}$ is not invertible,  let $L_{p}^{-1}$ be the partial inverse of $L_{p}$ such that
\be\label{wkb--2-Lp-2}
\Pi_{p}L^{-1}_{p} = L^{-1}_{p} \Pi_{p} = 0, \quad  L^{-1}_{p} L_{p} = L_{p}L_{p}^{-1} = I_{d+2}-\Pi_{p}.\nn
\ee

 It is easy to find that the matrices $L_{p}= (ipI_{d+2}+A_0)$ are invertible except $p\in {\cal H}_0 = \{-1,0,1\}$. Indeed, for $|p| \geq 2$, we have
 \be\label{Lp-p>2}
 L_{p}^{-1} = \bp-\frac{i}{p}  I_{d} & 0_d & 0_d \\  0_d^{\rm T} & \frac{ip}{1-p^{2}} & \frac{-1}{1-p^{2}} \\ 0_d^{\rm T} & \frac{1}{1-p^{2}} & \frac{ip}{1-p^{2}}\ep, \quad |p|\geq 2.
 \ee

 For $p\in {\cal H}_{0}$,  by direct computation we have
\be\label{intr-pola-0}
\ker A_{0} = (w, 0,0) \ \mbox{for any $w\in \R^{3}$},\quad \ker ( \pm i I_{d+2}+A_0) = {\rm span\,}\{e_{\pm}\},
\ee
where the conjugate couple
$
e_\pm:= (0_d^{\rm T},\pm i,1)^{\rm T}.
$
Hence, the orthogonal projections $\Pi_{\pm1}$ are defined as
\be\label{intr-pola-1}
\Pi_{\pm1} V = \frac{1}{|e_{\pm}|^{2}} (V, e_{\pm}) e_{\pm} = \frac{1}{2}(V, e_{\pm}) e_{\pm}, \quad \forall\, V \in \R^{d+2},
\ee
where $(U,V)$ denotes for the usual inner product in $\C^{d+2}$.  

Moreover, by direct computation, we have
\be\label{Pi-pm0}
\Pi_{0} = \bp I_{d} & 0_d & 0_d \\ 0_d^{\rm T}& 0&0 \\ 0_d^{\rm T}&0&0\ep, \quad \Pi_{1}= \frac{1}{2}\bp 0_{d\times d} & 0_d & 0_d \\ 0_d^{\rm T}& 1 & i \\ 0_d^{\rm T}&-i&1\ep, \quad \Pi_{-1} = \overline\Pi_{1},
\ee
and
\be\label{Lp-p=1}
L_{0}^{-1} = \bp 0_{d\times d} & 0_d & 0_d \\ 0_d^{\rm T}& 0 & -1 \\ 0_d^{\rm T} & 1 & 0 \ep, \quad L_{1}^{-1} = \bp -i\, I_{d} & 0_d & 0_d \\ 0_d^{\rm T}& -\frac{i}{4}&-\frac{1}{4} \\ 0_d^{\rm T}&\frac{1}{4}&-\frac{i}{4}\ep, \quad L^{-1}_{-1} = \overline {L_{1}^{-1}}.
\ee

\medskip

We now introduce three lemmas. The first one corresponds to certain compatibility conditions:
\begin{lemma}\label{lem-wktr} There holds
\be\label{wktr-1}
\Pi_{p} A(\nabla) \Pi_{p}   = 0, \quad \mbox{for each $p\in \Z$}.
\ee

\end{lemma}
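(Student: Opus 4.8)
The identity \eqref{wktr-1} is purely algebraic, reflecting the block structure of $A_0$ and $A(\nabla)$, so the plan is a short verification organized by the value of $p$, after first isolating the structural reason that makes it work.

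The key observation I would record at the outset is a splitting $\R^{d+2} = E_0 \oplus E_0^{\perp}$, where $E_0 := \ker A_0 = \{(w,0,0)^{\rm T} : w\in\C^d\}$ (see \eqref{intr-pola-0}) and $E_0^{\perp} = \{(0_d, v, u)^{\rm T}\}$. Reading off the form of $A(\nabla)$ from \eqref{def:AB}, one sees that $A(\nabla)$ \emph{exchanges} these two mutually orthogonal subspaces: $A(\nabla)(w,0,0)^{\rm T} = (0_d, \nabla^{\rm T} w, 0)^{\rm T} \in E_0^{\perp}$, while $A(\nabla)(0_d, v, u)^{\rm T} = (\nabla v, 0, 0)^{\rm T} \in E_0$.

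Next I would check that each $\ker L_p$ lies entirely inside one of these two subspaces, and then combine. For $|p|\ge 2$, $L_p$ is invertible by \eqref{Lp-p>2}, so $\Pi_p = 0$ and \eqref{wktr-1} is trivial. For $p=0$, $\ker L_0 = \ker A_0 = E_0$ and $\Pi_0$ is the orthogonal projection onto $E_0$ (cf.\ \eqref{Pi-pm0}); hence $\Pi_0 V \in E_0$, so $A(\nabla)\Pi_0 V \in E_0^{\perp}$, which $\Pi_0$ annihilates, giving $\Pi_0 A(\nabla)\Pi_0 = 0$. For $p=\pm1$, $\ker L_{\pm1} = \Span\{e_\pm\}$ with $e_\pm = (0_d^{\rm T}, \pm i, 1)^{\rm T} \in E_0^{\perp}$, and $\Pi_{\pm1}$ is the pointwise projection onto this line (see \eqref{intr-pola-1}); hence $\Pi_{\pm1} V \in E_0^{\perp}$, so $A(\nabla)\Pi_{\pm1} V \in E_0$, which is orthogonal to $\Span\{e_\pm\}$ and therefore killed by $\Pi_{\pm1}$, giving $\Pi_{\pm1} A(\nabla)\Pi_{\pm1} = 0$. (Alternatively, writing $\Pi_{\pm1} V = g\,e_\pm$ for the scalar function $g := \tfrac12(V,e_\pm)$, a direct computation gives $A(\nabla)(g\,e_\pm) = (\pm i\,\nabla g, 0, 0)^{\rm T}$, which is pointwise orthogonal to $e_\pm$.)

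I do not expect any genuine obstacle: the only thing to get right is the bookkeeping of which coordinate blocks $A(\nabla)$ connects, together with the fact that none of the kernels $\ker L_p$ straddles the splitting $E_0 \oplus E_0^{\perp}$. The $p=-1$ case may also be obtained from $p=1$ by complex conjugation, since $\Pi_{-1} = \overline{\Pi_1}$ and $A(\nabla)$ has real coefficients.
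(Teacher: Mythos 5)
Your proof is correct. The paper asserts \eqref{wktr-1} ``by direct computation using \eqref{intr-pola-1} and \eqref{Pi-pm0}'' and omits the details, which presumably means substituting the explicit $(d+2)\times(d+2)$ matrices for $\Pi_p$, $A(\nabla)$ and multiplying out. Your version reaches the same conclusion but organizes the verification around a single structural fact: $A(\nabla)$ exchanges the mutually orthogonal subspaces $E_0=\ker A_0=\{(w,0,0)^{\rm T}\}$ and $E_0^\perp=\{(0_d,v,u)^{\rm T}\}$, and for each $p\in\{-1,0,1\}$ the range of $\Pi_p$ (namely $\ker L_p$) sits entirely inside one of these two summands, so $A(\nabla)\,\operatorname{Range}\Pi_p \subset (\operatorname{Range}\Pi_p)^\perp$ and $\Pi_p A(\nabla)\Pi_p=0$ follows at once; the cases $|p|\ge2$ are trivial since $\Pi_p=0$. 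This is the same algebraic content, but the ``exchanging'' observation makes the weak-transparency mechanism transparent (and shows it depends only on the off-diagonal block pattern of $A(\nabla)$ relative to $\ker A_0$), whereas the brute-force matrix product verifies the identity without explaining why it holds. The complex-conjugation remark correctly handles $p=-1$, and the explicit pointwise computation $A(\nabla)(ge_\pm)=(\pm i\nabla g,0,0)^{\rm T}\perp e_\pm$ is a valid alternative check.
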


The results in Lemma \ref{lem-wktr} can be shown by direct computation using \eqref{intr-pola-1} and \eqref{Pi-pm0}. We will omit the details.

\medskip

The second one corresponds to some algebraic lemma:
\begin{lemma}\label{lem-2ndAlg} There holds
\be\label{2nd-Alg}
 \Pi_{1} A(\nabla)  L^{-1}_{1} A(\nabla) \Pi_{1} = - \frac{i}{2}\Delta \Pi_{1}.
\ee
\end{lemma}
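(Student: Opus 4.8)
\textbf{Proof proposal for Lemma \ref{lem-2ndAlg}.}

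The plan is to compute the left-hand side of \eqref{2nd-Alg} by going through the three block matrices $\Pi_{1}$, $A(\nabla)$, and $L_{1}^{-1}$ explicitly, using the formulas \eqref{Pi-pm0}, \eqref{def:AB}, and \eqref{Lp-p=1} already established. Since $\Pi_{1}$ has a single $2\times 2$ nonzero block in the $(v,u)$ slots (in fact $\Pi_{1} = \tfrac12(0_{d\times d}\oplus M)$ with $M=\left(\begin{smallmatrix}1&i\\-i&1\end{smallmatrix}\right)$), the only thing that matters is how $A(\nabla)L_{1}^{-1}A(\nabla)$ acts between the $w$-block and the $(v,u)$-block. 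First I would note that $A(\nabla)$ maps the $w$-block to the $v$-slot via $\nabla^{\mathrm T}$ and the $v$-slot back to the $w$-block via $\nabla$, while killing the $u$-slot; so $A(\nabla)\Pi_{1}$ takes a vector, reads off its $(v,u)$ part via $\Pi_{1}$, and feeds the resulting $v$-component through $\nabla$ into the $w$-block. Concretely, for $V=(w^{\mathrm T},v,u)^{\mathrm T}$ one gets $\Pi_{1}V = \tfrac12(0,\,v+iu,\,-iv+u)^{\mathrm T}$ and then $A(\nabla)\Pi_{1}V = \tfrac12(\nabla(v+iu),\,0,\,0)^{\mathrm T}$, i.e. the output lives purely in the $w$-block.

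Next I would apply $L_{1}^{-1}$: by \eqref{Lp-p=1} its $w$-block acts as $-i\,I_{d}$ and does not mix into the $(v,u)$-slots, so $L_{1}^{-1}A(\nabla)\Pi_{1}V = \tfrac12(-i\nabla(v+iu),\,0,\,0)^{\mathrm T}$, still a pure $w$-vector. Then one more application of $A(\nabla)$ sends the $w$-block into the $v$-slot through $\nabla^{\mathrm T}$: $A(\nabla)L_{1}^{-1}A(\nabla)\Pi_{1}V = (0_d,\,\tfrac{-i}{2}\,\nabla^{\mathrm T}\nabla(v+iu),\,0)^{\mathrm T} = (0_d,\,\tfrac{-i}{2}\Delta(v+iu),\,0)^{\mathrm T}$. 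Finally, applying $\Pi_{1}$ on the left projects this onto $\mathrm{span}\{e_{+}\}$: since the input now has $v$-component $\tfrac{-i}{2}\Delta(v+iu)$ and $u$-component $0$, formula \eqref{intr-pola-1} gives $\Pi_{1}$ of it equal to $\tfrac{-i}{2}\Delta\cdot\tfrac12\big((v+iu),\,e_{+}\big)e_{+}$ up to checking that the scalar bookkeeping matches $-\tfrac{i}{2}\Delta\,\Pi_{1}V$ exactly; comparing with $\Pi_{1}V=\tfrac12((V,e_{+}))e_{+}$ and $(V,e_{+}) = v+iu$ (with the convention that the inner product is conjugate-linear in the second slot, so $(V,e_{+})= -i\cdot\bar{(\pm i)}\,\ldots$ — I would fix the conjugation convention from \eqref{intr-pola-1} and keep it consistent), this yields precisely $-\tfrac{i}{2}\Delta\,\Pi_{1}$.

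The only genuinely delicate point is the bookkeeping of the inner-product/conjugation convention in \eqref{intr-pola-1}: one must be careful that the $i$'s from $e_{\pm}=(0_d^{\mathrm T},\pm i,1)^{\mathrm T}$, from $L_{1}^{-1}$'s $w$-block $-i\,I_d$, and from the projection pairing combine to give the single factor $-\tfrac{i}{2}$ and not $\pm\tfrac{i}{2}$ or $\tfrac12$. A clean way to avoid sign errors is to verify the identity on the one-dimensional space $\mathrm{Range}\,\Pi_{1}=\mathrm{span}\{e_{+}\}$ only (both sides annihilate $\ker\Pi_{1}$ automatically because of the outer $\Pi_{1}$ and the structure of the $w$-output), i.e. it suffices to compute $\Pi_{1}A(\nabla)L_{1}^{-1}A(\nabla)e_{+}$ and check it equals $-\tfrac{i}{2}\Delta\,e_{+}$. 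Tracking $e_{+}=(0_d^{\mathrm T},i,1)^{\mathrm T}$ through the chain: $A(\nabla)e_+ = (\nabla i + \nabla\cdot 1\cdot 0\ldots)$ — here $A(\nabla)e_+=(i\,0_d? )$; more carefully $A(\nabla)e_+$ has $w$-block $\nabla\cdot(\text{$v$-entry})=\nabla\cdot i = i\nabla(1)$? Since the $v$-entry of $e_+$ is the constant $i$ (as a multiplier, this is $i$ times the identity, i.e. $A(\nabla)$ acts on the vector of \emph{functions}), I would instead phrase $e_+$ with a test function $\varphi$, i.e. apply the operator identity to $\varphi e_+$, getting $w$-block $i\nabla\varphi$, then $L_1^{-1}$ gives $-i\cdot i\nabla\varphi = \nabla\varphi$ in the $w$-block, then $A(\nabla)$ gives $v$-entry $\nabla^{\mathrm T}\nabla\varphi=\Delta\varphi$, and $\Pi_1$ of $(0_d,\Delta\varphi,0)^{\mathrm T}$ equals $\tfrac12\Delta\varphi\,(e_{+0},e_+)e_+$ where $e_{+0}=(0_d^{\mathrm T},1,0)^{\mathrm T}$ so $(e_{+0},e_+)= \overline{(e_+)_v}= \overline{i}=-i$, giving $-\tfrac{i}{2}\Delta\varphi\,e_+$, as claimed. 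I would then present this streamlined computation as the proof and omit the routine matrix multiplications.
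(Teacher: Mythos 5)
Your proposal is correct and follows the route the paper indicates: a direct computation using the explicit formulas for $\Pi_1$, $A(\nabla)$, and $L_1^{-1}$; the paper itself omits the details, asserting only that \eqref{2nd-Alg} "can be shown by direct computations." The streamlined verification on vectors of the form $\varphi\, e_+$ at the end is the cleanest way to present it and correctly resolves the sign/conjugation bookkeeping that gave you pause in the middle of the writeup.
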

This result is actually a consequence of the second algebraic lemma in nonlinear geometric optics, see for example \cite{Texier04}. While here we already have the precise expressions of $\Pi_{1}$ and $L^{-1}_{1}$, the equality in \eqref{2nd-Alg} can be shown by direct computations.

\medskip

The last one is the following of which the proof is rather straightforward:
\begin{lemma}\label{lem-1-pi1} Let $g$ and $h$ be scalar functions. Then there hold
 \ba\label{wkb-U01-03}
 L^{-1}_{1} A(\nabla)  (g e_{+} )= \bp \nabla g \\  0 \\ 0 \ep, \  L^{-1}_{1} A(\nabla)  L^{-1}_{1} A(\nabla) (g e_{+}) + L^{-1} \bp 0_d \\  h\\ 0 \ep  = \frac{1}{4}  \bp 0_d \\ -  i (\Delta g + h ) \\ \Delta g + h \ep.
\ea
\end{lemma}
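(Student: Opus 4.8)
The plan is to prove Lemma~\ref{lem-1-pi1} by direct computation using the explicit matrix forms established above. Recall $e_{+} = (0_d^{\rm T}, i, 1)^{\rm T}$, and that $A(\nabla)$ acts on a block vector $(w, v, u)^{\rm T}$ by $A(\nabla)(w,v,u)^{\rm T} = (\nabla v, \nabla^{\rm T} w, 0)^{\rm T}$, while $L_1^{-1}$ is the partial inverse given explicitly in \eqref{Lp-p=1}. So the strategy is simply to chain these operations together, keeping careful track of the block structure $(d) + (1) + (1)$.

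First I would compute $A(\nabla)(g e_{+})$: since $g e_{+} = (0_d, i g, g)^{\rm T}$, we get $A(\nabla)(g e_{+}) = (\nabla(ig), \nabla^{\rm T} 0_d, 0)^{\rm T} = (i \nabla g, 0, 0)^{\rm T}$. Then apply $L_1^{-1}$ from \eqref{Lp-p=1}: the first block of $L_1^{-1}$ is $-i\,I_d$ acting on the first block and the other blocks of $L_1^{-1}$ touch only the lower two components, so $L_1^{-1}(i\nabla g, 0, 0)^{\rm T} = (-i \cdot i \nabla g, 0, 0)^{\rm T} = (\nabla g, 0, 0)^{\rm T}$, which is the first identity in \eqref{wkb-U01-03}. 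For the second identity, I would first note that the vector $(\nabla g, 0, 0)^{\rm T}$ just obtained lies in $\ker A_0 \times \{0\}$, but that is irrelevant; instead apply $A(\nabla)$ again: $A(\nabla)(\nabla g, 0, 0)^{\rm T} = (0_d, \nabla^{\rm T}\nabla g, 0)^{\rm T} = (0_d, \Delta g, 0)^{\rm T}$. Separately, $(0_d, h, 0)^{\rm T}$ added in. Now apply $L_1^{-1}$ — and here I should be careful, because the statement writes $L^{-1}$ without a subscript; I would read this as $L_1^{-1}$ for consistency (the only operator in play at harmonic $p=1$), and I would add a sentence flagging that. Applying $L_1^{-1}$ to $(0_d, \Delta g + h, 0)^{\rm T}$ uses the lower-right $2\times 2$ block $\begin{pmatrix} -\frac{i}{4} & -\frac14 \\ \frac14 & -\frac{i}{4}\end{pmatrix}$ acting on $(\Delta g + h, 0)^{\rm T}$, giving $\bigl(-\tfrac{i}{4}(\Delta g + h),\ \tfrac14(\Delta g + h)\bigr)^{\rm T}$ in the lower two components, and $0_d$ on top. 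That is exactly $\frac14(0_d, -i(\Delta g + h), \Delta g + h)^{\rm T}$, matching the right-hand side.

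There is no real obstacle here — the lemma is a bookkeeping computation — so the write-up should consist of the two short chains above, with a remark that $\nabla^{\rm T}\nabla = \Delta$ and a clarification of the notation $L^{-1} = L_1^{-1}$. I would present it as: "\emph{Proof.} This follows by direct computation from \eqref{def:AB} and \eqref{Lp-p=1}. Using $e_{+} = (0_d^{\rm T}, i, 1)^{\rm T}$ we have $A(\nabla)(g e_{+}) = (i\nabla g, 0, 0)^{\rm T}$, whence $L_1^{-1} A(\nabla)(g e_{+}) = (\nabla g, 0, 0)^{\rm T}$, which is the first identity. Applying $A(\nabla)$ once more gives $A(\nabla)(\nabla g, 0,0)^{\rm T} = (0_d, \Delta g, 0)^{\rm T}$, so that $A(\nabla) L_1^{-1} A(\nabla)(g e_{+}) + (0_d, h, 0)^{\rm T} = (0_d, \Delta g + h, 0)^{\rm T}$, and a final application of $L_1^{-1}$ using the lower-right block in \eqref{Lp-p=1} yields $\frac14(0_d, -i(\Delta g + h), \Delta g + h)^{\rm T}$. $\square$" The only thing I would double-check while writing is the sign conventions in $A(\nabla)$ versus the factor $-\frac1\e A(\nabla)$ appearing in \eqref{00}, to make sure the lemma as stated uses the bare $A(\nabla)$; the statement does, so no adjustment is needed.
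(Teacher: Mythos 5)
Your computation is correct and is exactly what the paper intends: the paper gives no proof beyond remarking that it is "rather straightforward," and the lemma indeed follows from direct block matrix multiplication with $e_+ = (0_d^{\rm T}, i, 1)^{\rm T}$, the explicit form of $A(\nabla)$ in \eqref{def:AB}, and $L_1^{-1}$ from \eqref{Lp-p=1}, just as you carried out. Your reading of the unsubscripted $L^{-1}$ as $L_1^{-1}$ is also the right one, consistent with its use in \eqref{wkb-U01-01}--\eqref{wkb-U01-04}.
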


\subsection{Order $O(\e^{-2})$} \label{sec-wkb-e--2}
 As shown in Section \ref{sec:intr-wkb}, by plugging the formal expansion \eqref{def-wkb} into \eqref{00},  the equations which comprise all terms of order $O(\e^{-2})$ are exactly $\Phi_{-2,p}=0$:
\be\label{wkb--2}
L_{p} U_{0,p} = 0 \Longleftrightarrow \Pi_{p} U_{0,p}=U_{0,p},\quad \mbox{for each $p\in \Z$}.\ee
This implies
\be\label{intr-pola}
 U_{0,1}= \Pi_{1} U_{0,1} =  g_{0} e_{+} \quad \mbox{for some scalar function $ g_0$.}
\ee

We then deduce from \eqref{wkb--2} and the invertibility of $L_{p}$ that
\be\label{U0p-p>2}
U_{0,p}=0,\quad \mbox {for all $p$ such that $|p|\geq 2$}.
\ee

For $p=-1$,  to guarantee the reality of the solutions, it is natural and necessary to impose
\be\label{U0-1}
U_{0,-1}=\overline U_{0,1}=\bar g_0 e_-.
\ee

\begin{remark}\label{rem-neg-p}
Similar requirement  as \eqref{U0-1} is needed for other terms to ensure the reality of the WKB solutions:
\be\label{web-0-p<0}
U_{n,p} = \overline U_{n,-p}, \quad \mbox{for each $n,p \in \Z$}.\nn
\ee
As a result, in the sequel we shall only consider nonnegative $p$.

\end{remark}

We find it is not needed to include the mean mode $U_{0,0}$. Hence, for simplicity, we take
$
U_{0,0}=0.
$
To sum up, the leading term $U_{0}$ has the form
\be\label{U0-form}
U_{0} = e^{i\th} g_{0} e_{+} + e^{-i\th} \bar g_{0} e_{-},
\ee
where $g_{0}$ is a scalar function to be made precise later.

Before continuing the WKB cascade, we find that, with the cubic nonlinearity we have chosen  and with such a leading term $U_{0}$ as in \eqref{U0-form}, it is sufficient to consider WKB solutions only involving odd harmonics (odd $p\in \Z$):
\begin{proposition}\label{prop-p-odd}
Let $U_{a}$ be defined as in \eqref{def-wkb} and let $U_{0}$ be given as in \eqref{U0-form}. If
\be\label{0-p-even}
U_{n,p} = 0, \ \mbox{for all $p\in   \Z_{\rm even}$,  for each $n = 0, \cdots, K_{a}+2$},
\ee
 then the equations $\Phi_{n,p} = 0$ are satisfied for each $n = -2,-1, 0 , \cdots, K_{a} $ and all $p\in  \Z_{\rm even}$.

\end{proposition}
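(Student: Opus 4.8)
The plan is to prove the claim by induction on $n$, verifying that the parity pattern \eqref{0-p-even} is preserved by the WKB cascade \eqref{phi-n-p}. The key structural observation is that the cubic nonlinearity $f(u)=\lambda u^3$ is an \emph{odd} function, so it maps a trigonometric polynomial supported on odd harmonics to a trigonometric polynomial supported on odd harmonics; more precisely, in the sum defining $f(u_a)_{n,p}$ in \eqref{Fnp}, if every $u_{n_i,p_i}$ with $n_i\le n$ vanishes for $p_i$ even, then $f(u_a)_{n,p}$ can only be nonzero when $p=p_1+p_2+p_3$ is a sum of three odd integers, hence $p$ is odd. Meanwhile the linear operators in \eqref{phi-n-p}, namely $\d_t$, $A(\nabla)$, and $L_p=(ipI_{d+2}+A_0)$, do not mix different harmonics $p$ at all: the equation $\Phi_{n,p}=0$ only couples $U_{n,p}$, $U_{n+1,p}$, $U_{n+2,p}$ with the same index $p$, together with $F(U_a)_{n,p}$.

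First I would set up the induction carefully. The base cases $n=-2,-1$ are the convention $U_n=0$, so $\Phi_{-2,p}=L_pU_{0,p}=0$ and $\Phi_{-1,p}=0$ hold trivially for all $p$ (for even $p$ in particular, since $U_{0,p}=0$ for even $p$ by hypothesis, and for $p$ with $|p|\ge 2$ anyway by \eqref{U0p-p>2}; the only even harmonic in $\calH_0$ is $p=0$, and $U_{0,0}=0$ by \eqref{U0-form}). For the inductive step, fix an even $p\in\Z$ and an order $n$ with $0\le n\le K_a$, and look at
\be
\Phi_{n,p}=\d_t U_{n,p}-A(\nabla)U_{n+1,p}+L_p U_{n+2,p}-F(U_a)_{n,p}.
\nn\ee
By the standing hypothesis \eqref{0-p-even}, all three of $U_{n,p}$, $U_{n+1,p}$, $U_{n+2,p}$ vanish (note $n,n+1,n+2\le K_a+2$, so they are all within the range covered by \eqref{0-p-even}), hence the first three terms vanish identically. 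It remains to show $F(U_a)_{n,p}=0$, i.e. $f(u_a)_{n,p}=0$, which by \eqref{Fnp} is a finite sum over $n_1+n_2+n_3=n$ and $p_1+p_2+p_3=p$ of products $u_{n_1,p_1}u_{n_2,p_2}u_{n_3,p_3}$. Since $n_i\le n\le K_a\le K_a+2$, each factor is covered by \eqref{0-p-even}; and since $p$ is even, in any decomposition $p=p_1+p_2+p_3$ at least one $p_i$ must be even (three odd numbers sum to an odd number), so that factor $u_{n_i,p_i}$ vanishes. Hence every term in the sum vanishes and $f(u_a)_{n,p}=0$, giving $\Phi_{n,p}=0$.

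I do not expect a genuine obstacle here; the statement is essentially a bookkeeping consequence of the oddness of the cubic and the harmonic-diagonal structure of \eqref{phi-n-p}. The only point requiring a little care is index management: one must check that all harmonics appearing in $\Phi_{n,p}$ for $n\le K_a$ involve amplitudes $U_{m,q}$ with $m\le K_a+2$, so that the hypothesis \eqref{0-p-even} genuinely applies to every term — which it does, since the highest-order amplitude entering $\Phi_{n,p}$ is $U_{n+2,p}$ and $n+2\le K_a+2$. One should also note the remark preceding the proposition (Remark \ref{rem-neg-p}) so that it suffices to treat $p\ge 0$, and record explicitly that the parity argument " three odds sum to an odd" is what forces a repeated even factor. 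With those remarks in place the proof is a short direct computation, and no analytic estimates are needed.
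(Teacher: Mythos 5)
Your argument is correct and is essentially the same as the paper's: the linear terms in $\Phi_{n,p}$ vanish for even $p$ because the hypothesis \eqref{0-p-even} covers all of $U_{n,p}, U_{n+1,p}, U_{n+2,p}$ (with $n+2\le K_a+2$), and the source term $f(u_a)_{n,p}$ vanishes because a sum of three odd integers is odd, so for even $p$ every decomposition $p=p_1+p_2+p_3$ has at least one even $p_i$, killing the corresponding factor. One small remark: although you announce "induction on $n$," no inductive hypothesis is ever invoked — each case $\Phi_{n,p}=0$ is verified directly from \eqref{0-p-even} — so this is really a direct verification, exactly as in the paper.
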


\begin{proof}[Proof of Proposition \ref{prop-p-odd}] With the assumption \eqref{0-p-even}, there holds for each $n =-2, -1,  0, \cdots, K_{a}$ and for each $p\in  \Z_{\rm even}$ that
\be\label{0-p-even-1}
\d_t U_{n,p}- A(\nabla) U_{n+1,p} +\big(ip\, I_{d+2} +A_0\big) U_{n+2,p}  = 0.
\ee
Clearly $p_{1} + p_{2} + p_{3} $ is odd when $p_{1}, p_{2}, p_{3}$ are all odd. Hence, by the form of $F(U_{a})_{n,p}$ in \eqref{Fnp}, we have $f(u_{a})_{n,p} = 0$ and then $F(U_{a})_{n,p} = 0$ for each even $p\in \Z_{\rm even}$.  Together with \eqref{0-p-even-1}, we have $\Phi_{n,p} = 0$  for each even $p\in \Z_{\rm even}$, for each $n = -2,-1, 0, \cdots, K_{a}$.
\end{proof}

\begin{remark}\label{rem-p-odd} With such an observation as in Proposition \ref{prop-p-odd},  in the sequel we will impose $U_{n,p} = 0$ for all $n\in \N, \ p\in \Z_{\rm even}$ and it is sufficient to consider \eqref{phi-n-p} with $p$ odd.
 
\end{remark}

\subsection{Order $O(\e^{-1})$}\label{sec-wkb-e--1}
 We then consider the equations which comprise all terms of order $O(\e^{-1})$, that are $\Phi_{-1,p}=0$:
\be\label{wkb--1}
-A(\nabla) U_{0,p}+ L_{p} U_{1,p}=0, \quad \mbox{for each $p\in \Z$}.
\ee
As observed from Remark \ref {rem-neg-p}, Proposition \ref{prop-p-odd} and Remark \ref{rem-p-odd}, it is sufficient to consider \eqref{wkb--1} with odd $p\geq 0$.

When $p=1$, applying the partial inverse $L^{-1}_{1}$ to \eqref{wkb--1} implies
\be\label{U11-1}
(1-\Pi_{1}) U_{1,1} = L^{-1}_{1} A(\nabla) U_{0,1} = L^{-1}_{1} A(\nabla) \Pi_{1} U_{0,1}  = L^{-1}_{1} \bp i\nabla g_{0}\\ 0\\0\ep =  \bp \nabla g_{0}\\ 0\\0\ep,
\ee
where we used \eqref{intr-pola} and \eqref{Lp-p=1}. While for the other part $\Pi_{1} U_{1,1}$, by \eqref{intr-pola-0}--\eqref{intr-pola-1}, we have
\be\label{U11-2}
\Pi_{1} U_{1,1} = g_{1} e_+,  \quad \mbox{for some scalar function $g_1$}.\nn
\ee

\smallskip

When $p\geq 3$, by the invertibility of $L_{p} = (ipI_{d+2}+A_0)$ and \eqref{U0p-p>2}, we have
\be\label{U1p-p>2}
U_{1,p}=0,\quad \mbox{for all $p$ such that $p\geq 3$}.\nn
\ee

\medskip

Thus, $U_{1}$ has the form
\be\label{U1-form}
U_{1} = e^{i\th} \left[g_{1} e_{+} +\bp \nabla g_{0}\\ 0\\0\ep \right] + c.c.,  \quad \mbox{for some scalar function $g_1$}
 \ee
where $c.c.$ denotes the related complex conjugate.

\subsection{Order $O(\e^{0})$} \label{sec-wkb-e-0} The equations comprising all the terms of order $O(\e^{0})$ are $\Phi_{0,p}=0$:
\be\label{wkb-0}
\d_t U_{0,p}-A(\nabla) U_{1,p} + L_{p} U_{2,p}=F(U_{a})_{0,p}, \quad \mbox{for each $p\in \Z$}.
\ee
 By the form of the nonlinear term $F$ in \eqref{def:AB} and the form of the leading term $U_{0}$ obtained in \eqref{U0-form} in the previous step, we have
$ F(U_{a})_{0}=(0_d^{\rm T},- f(u_{0}),0)^{\rm T}$ with
\ba\label{f1-th}
f(u_0) = \l ( e^{i\th} g_0 + e^{-i\th}\bar g_0)^{3} =  \l \big( e^{3 i\th} g_0^{3} + 3 e^{i\th} |g_0|^{2} g_{0} +  3 e^{- i\th} |g_0|^{2} \bar g_{0} +  e^{-3 i\th} \bar g_0^{3} \big),
\ea
where we used the fact that $f(u_{a})_{0} = f(u_{0})$.

\medskip

When $p=1$, equation \eqref{wkb-0} becomes
\be\label{wkb-U01-0}
\d_t U_{0,1}- A(\nabla) U_{1,1} + L_{1} U_{2,1} = F(U_{a})_{0,1} = \bp 0_d \\  -3 \l |g_{0}|^{2} g_{0} \\ 0 \ep.
\ee
We first apply $\Pi_{1}$ to \eqref{wkb-U01-0} and derive
\be\label{wkb-U01-1}
\d_t \Pi_{1} U_{0,1} - \Pi_{1} A(\nabla) U_{1,1}  = \Pi_{1} \bp 0_d \\  -3 \l |g_{0}|^{2} g_{0} \\ 0 \ep.
\ee
Using \eqref{U11-1} allows us to do the decomposition
\ba\label{wkb-U01-2}
\Pi_{1} A(\nabla) U_{1,1}  & = \Pi_{1} A(\nabla) (1-\Pi_{1})U_{1,1} +  \Pi_{1} A(\nabla) \Pi_{1} U_{1,1} \\
& =  \Pi_{1} A(\nabla)  L^{-1}_{1} A(\nabla) \Pi_{1} U_{0,1} +  \Pi_{1} A(\nabla) \Pi_{1} U_{1,1} .
\ea
 
We then deduce from \eqref{wkb-U01-1}, \eqref{wkb-U01-2}, Lemmas \ref{lem-wktr}  and \ref{lem-2ndAlg} that
\be\label{wkb-U01-3}
\d_t \Pi_{1} U_{0,1} + \frac{i}{2}\Delta \Pi_{1} U_{0,1} = \Pi_{1} \bp 0_d \\  -3 \l |g_{0}|^{2} g_{0} \\ 0 \ep.
\ee
Together with \eqref{intr-pola}, we deduce from \eqref{wkb-U01-3} that
\be\label{wkb-U01-4}
2 i \d_t g_{0}  - \Delta  g_{0}  + 3 \l |g_{0}|^{2} g_{0} = 0,
\ee
which is exactly the cubic Schr\"odinger equation in \eqref{eq-g-cub}.

Recall the initial data of $U(0,\cdot)$ in \eqref{ini-data00}:
\be\label{ini-data000}
U(0,\cdot) =  (0_d^{\rm T} , \psi ,\phi   )^{\rm T} +  (\e \nabla^{\rm T} \phi , 0,0)^{\rm T}.
\ee
We will choose the initial datum of $g_{0}$ such that
$
U_{0} (0,\cdot) = (0_d^{\rm T} , \psi ,\phi )^{\rm T}.
$
By \eqref{U0-form}, it is sufficient to impose
$$
g_{0}(0,\cdot) e_{+} + \bar g_{0}(0,\cdot) e_{-} =  (0_d^{\rm T} , \psi ,\phi )^{\rm T}.
$$
This forces
\be\label{wkb-ini-g0}
g_0(0,\cdot)=\frac{\phi - i \psi}{2},
\ee
which is exactly the initial datum in \eqref{eq-g-cub}.  With such a choice, we see from  \eqref{intr-pola},  \eqref{U11-1} and \eqref{ini-data000} that
\be\label{wkb-ini-o1}
U_{0} (0,\cdot)  +\e (1-\Pi_{1}) U_{1,1} (0,\cdot) +\e (1-\Pi_{-1}) U_{1,-1} (0,\cdot)  = U(0,\cdot).
\ee
Thus in the next order it is nature to require $\Pi_{\pm 1} U_{1, \pm 1}(0, \cdot) = 0$.

\medskip

We then apply the partial inverse $L_{1}^{-1}$ to \eqref{wkb-U01-0} and use \eqref{wkb-U01-4} to derive
\ba\label{wkb-U01-01}
 (1-\Pi_{1}) U_{2,1} &=  L^{-1}_{1} A(\nabla)  U_{1,1}  + L^{-1}_{1} \bp 0_d \\  -3 \l |g_{0}|^{2} g_{0} \\ 0 \ep \\
 &=  L^{-1}_{1} A(\nabla)  \Pi_{1} U_{1,1} + L^{-1}_{1} A(\nabla)  L^{-1}_{1} A(\nabla) \Pi_{1} U_{0,1} + L^{-1}_{1} \bp 0_d \\  -3 \l |g_{0}|^{2} g_{0} \\ 0 \ep.
\ea
Then by \eqref{wkb-U01-01}, \eqref{wkb-U01-03} and \eqref{wkb-U01-4}, we deduce that
 \ba\label{wkb-U01-04}
  (1-\Pi_{1}) U_{2,1}   = \bp \nabla g_{1} \\  0 \\ 0 \ep + \frac{1}{4} \bp 0_d \\ -  i (\Delta  g_{0} -  3 \l |g_{0}|^{2} g_{0}) \\ \Delta  g_{0} -  3 \l |g_{0}|^{2} g_{0} \ep  =  \bp \nabla g_{1} \\  0 \\ 0 \ep + \frac{1}{2} \bp 0_d \\  \d_{t}g_{0}\\ i \d_{t}g_{0} \ep.
  \ea

While for the other part $\Pi_{1} U_{2,1}$, there exists some scalar function $g_{2}$ such that
\be\label{wkb-U01-05}
\Pi_{1} U_{2,1} = \big(g_{2} - \frac{i}{2} \d_{t} g_{0}\big) e_{+}.
\ee
Then we see from \eqref{wkb-U01-04} and \eqref{wkb-U01-05} that
\be\label{wkb-U01-06}
 U_{2,1} =   (1-\Pi_{1}) U_{2,1}  +   \Pi_{1} U_{2,1}  =  g_{2}  e_{+} + \bp \nabla g_{1} \\  \d_{t}g_{0} \\ 0 \ep , \quad \mbox{for some scalar function $g_{2}$}.
\ee

\medskip

While for $p=3$, by \eqref{Lp-p>2} and \eqref{f1-th}, together with the fact $U_{0,3} = U_{1,3} = 0$, equation \eqref{wkb-0} becomes
\be\label{web-0-p=3}
L_{3} U_{2,3} = \bp 0_d \\ -\l g_{0}^{3} \\ 0 \ep \Longleftrightarrow U_{2,3} = L_{3}^{-1} \bp 0_d \\ -\l g_{0}^{3} \\ 0 \ep = \frac{\l g_{0}^{3}}{8}\bp 0_d \\ 3i \\ 1\ep.
\nn
\ee

While for $p\geq 5$, it is straightforward to deduce
\be\label{web-0-p>4}
L_{p} U_{2,p} = 0 \Longleftrightarrow U_{2,p} = 0, \quad \forall\, p\geq 5.
\nn
\ee

Hence $U_{2}$ has the form
\be\label{U2-form}
U_{2} = e^{i\th} \left[g_{2} e_{+} +\bp \nabla g_{1} \\  \d_{t}g_{0} \\ 0 \ep \right] +  e^{3i\th} \frac{\l g_{0}^{3}}{8}\bp 0_d \\ 3i \\ 1\ep + c.c..
\ee

\subsection{Order $O(\e^{1})$ } \label{sec-wkb-e-1} The equations comprise all the terms of order $O(\e^{1})$ are $\Phi_{1,p}=0$:
\be\label{wkb-1}
\d_t U_{1,p}-A(\nabla) U_{2,p} + L_{p} U_{3,p} = F(U_{a})_{1,p}, \quad \mbox{for each $p\in \Z$}.
\ee
By \eqref{U0-form} and \eqref{U1-form}, we know the right-hand side $ F(U_{a})_{1,p} = (0_d^{\rm T}, -  f(u_{a})_{1,p},0)^{\rm T}$ with
\ba\label{f1}
f(u_{a})_{1} &= 3 \l u_{0}^{2} u_{1} = 3\l (e^{i\th} g_{0} + e^{-i\th} \bar g_{0})^{2} (e^{i\th} g_{1} + e^{-i\th} \bar g_{1}) \\
& = 3\l \big (e^{3 i\th} g_{0}^{2} g_{1} + e^{i\th} (g_{0}^{2} \bar g_{1}   + 2 |g_{0}|^{2} g_{1})  + e^{-i\th} (\bar g_{0}^{2}   g_{1}   + 2 |g_{0}|^{2} \bar g_{1}) +e^{- 3 i\th} \bar g_{0}^{2} \bar g_{1}  \big) .
\nn
\ea

\medskip

For $p=1$, equation \eqref{wkb-1} becomes
\be\label{wkb-11}
\d_t U_{1,1}-A(\nabla) U_{2,1} + L_{1} U_{3,1} =  \bp 0_d \\ -3 \l  (g_{0}^{2} \bar g_{1}   + 2 |g_{0}|^{2} g_{1}) \\ 0 \ep.
\ee

Applying $\Pi_{1}$ to \eqref{wkb-11},  using Lemma \ref{lem-wktr}, together with \eqref{wkb-U01-01}--\eqref{wkb-U01-04}, gives
\be\label{wkb-11-1}
\d_t \Pi_{1} U_{1,1} - \Pi_{1} A(\nabla)  \bp \nabla g_{1} \\  0 \\ 0 \ep  - \frac{1}{2}  \Pi_{1} A(\nabla) \bp 0_d \\  \d_{t}g_{0} \\ i \d_{t}g_{0}  \ep =  \Pi_{1} \bp 0_d \\ -3 \l  (g_{0}^{2} \bar g_{1}   + 2 |g_{0}|^{2} g_{1}) \\ 0 \ep.
\ee
Observe that
 \ba\label{PiA-Pi12}
 \Pi_{1} A(\nabla)  \bp \nabla g \\  0 \\ 0 \ep  = \Pi_{1} \bp 0_d \\  \Delta  g \\ 0 \ep, \quad \Pi_{1} A(\nabla) \bp 0_d \\  g \\ h  \ep = 0, \quad \mbox{for any scalar functions $g$ and $h$.}
 \ea
Then equation \eqref{wkb-11-1} is equivalent to
\be\label{wkb-11-2}
2 i \d_t g_{1} - \Delta  g_{1}  + 3 \l  (g_{0}^{2} \bar g_{1}   + 2 |g_{0}|^{2} g_{1})  = 0,
\ee
which is a linear Schr\"odinger equation in $g_{1}$.

\medskip

 Applying $L_{1}^{-1}$ to \eqref{wkb-11}, using  \eqref{wkb-U01-04}--\eqref{wkb-U01-06} and Lemma \ref{lem-1-pi1} gives
\ba\label{wkb-11-3}
(1-\Pi_{1}) U_{3,1}     = -\d_t L_{1}^{-1} U_{1,1}  + L_{1}^{-1} A(\nabla) U_{2,1}  + L_{1}^{-1} \bp 0_d \\ - 3 \l  (g_{0}^{2} \bar g_{1}   + 2 |g_{0}|^{2} g_{1}) \\ 0 \ep     =   \bp \nabla g_{2} \\  \frac{\d_{t}g_{1}}{2} \\ \frac{i \d_{t}g_{1}}{2}  \ep.
\ea
While for $\Pi_{1} U_{3,1},$ there exists a scalar function $g_{3}$ such that
 \be\label{wkb-11-3-PiU31}
\Pi_{1} U_{3,1} = \big(g_{3} - \frac{i}{2} \d_{t} g_{1}\big) e_{+}.\nn
\ee
Then
\be\label{wkb-11-3-U31}
 U_{3,1} =(1-\Pi_{1}) U_{3,1}   + \Pi_{1} U_{3,1}   = g_{3}  e_{+} + \bp \nabla g_{2} \\  \d_{t}g_{1} \\ 0 \ep , \quad \mbox{for some scalar function $g_{3}$}.\nn
\ee

\medskip

For $p=3$, equation \eqref{wkb-1} becomes
 \be\label{wkb-13-1}
-A(\nabla) U_{2,3} + L_{3} U_{3,3} = F(U_{a})_{1,3},\nn
\ee
 which implies
\ba\label{wkb-13-2}
U_{3,3}  = L_{3}^{-1} A(\nabla) U_{2,3} + L_{3}^{-1}  F(U_{a})_{1,3}  =  \frac{\l }{8}\bp \nabla(g_{0}^{3}) \\ 0 \\ 0 \ep + \frac{3\l g_{0}^{2} g_{1}}{8}\bp 0_d \\ 3 i \\ 1 \ep.
\nn
\ea

Finally, it is easy to obtain
\ba\label{wkb-14}
U_{3,p}  = 0, \ \forall p\geq 5.
\ea

From \eqref{wkb-11-3}--\eqref{wkb-14} we deduce
\be\label{U3-form}
U_{3} = e^{i\th} \left[g_{3} e_{+} +\bp \nabla g_{2} \\ \d_{t} g_{1} \\0\ep  \right] +  e^{3i\th}  \left[\frac{\l }{8}\bp \nabla(g_{0}^{3}) \\ 0 \\ 0 \ep + \frac{3\l g_{0}^{2} g_{1}}{8}\bp 0_d \\ 3 i \\ 1 \ep \right] + c.c..
\ee

Thanks to the observation \eqref{wkb-ini-o1}, we do not need to include any initial datum of  $g_{1}$ to ensure an initial difference arbitrarily small between the exact solution $U$ and the WKB approximate solution $U_{a}$. Thus, we choose
\be\label{ini-g1}
g_{1}(0,\cdot) = 0.
\ee
Then the solution of \eqref{wkb-11-2} is identical zero, i.e.
$
g_{1}\equiv 0.
$

\subsection{Order $O(\e^{2})$}\label{sec-wkb-e-2} The equations comprise all the terms of order $O(\e^{2})$ are $\Phi_{2,p}=0$:
\be\label{wkb-2}
\d_t U_{2,p}-A(\nabla) U_{3,p} + L_{p} U_{4,p} = F(U_{a})_{2,p}, \quad \mbox{for each $p\in \Z$},
\ee
where the right-hand side $F(U_{a})_{2}= (0_d^{\rm T}, - f(u_{a})_{2},0)^{\rm T}$ with
$
f(u_{a})_{2} = 3 \l ( u_{0} u_{1}^{2} + u_{0}^{2} u_{2}).
$
By \eqref{U1-form} and $g_{1} = 0$, we know that $u_{1} = 0$. Together with \eqref{U0-form} and \eqref{U2-form} we have
\ba\label{def-f2-1}
f(u_{a})_{2} & = 3 \l   u_{0}^{2} u_{2}= 3\l \big(e^{i\th} g_{0} +e^{-i\th} \bar g_{0} \big)^{2} \left[  e^{i\th} g_{2}  + e^{-i\th} \bar g_{2}   + e^{3i\th} \frac{\l}{8} g_{0}^{3} + e^{-3i\th} \frac{\l}{8} \bar g_{0}^{3} \right]\\
 & = 3 \l \left[ e^{5i\th} \frac{\l}{8}g_{0}^{5} + e^{3 i\th}\big( g_{0}^{2} g_{2} + \frac{\l}{4} |g_{0}|^{2} g_{0}^{3} \big)   + e^{i\th} \big(  g_{0}^{2} \bar g_{2} + 2 |g_{0}|^{2} g_{2} +  \frac{\l}{8} |g_{0}|^{4} g_{0} \big) \right] + c.c.
\ea

\medskip

For $p=1$, equation \eqref{wkb-2} becomes
\be\label{wkb-21}
\d_t U_{2,1}-A(\nabla) U_{3,1} + L_{1} U_{4,1} =  \bp 0_d \\  -  f(u_{a})_{2,1}\\ 0 \ep,
\ee
where, by \eqref{def-f2-1},  $f(u_{a})_{2,1}$ has the form
\ba\label{wkb-21-0}
f(u_{a})_{2,1} : = 3\l \big(g_{0}^{2} \bar g_{2} + 2 |g_{0}|^{2} g_{2} +  \frac{\l}{8} |g_{0}|^{4} g_{0}  \big).
\ea

Applying $\Pi_{1}$ to \eqref{wkb-21},  using Lemma \ref{lem-wktr}, together with \eqref{PiA-Pi12} and \eqref{wkb-11-3}, gives
\be\label{wkb-21-1}
\d_t \Pi_{1} U_{2,1}  - \Pi_{1} \bp 0_d \\  \Delta  g_{2} \\ 0 \ep =  \Pi_{1} \bp 0_d \\ - f(u_a)_{2,1} \\ 0 \ep,
\ee
which is equivalent to, by using \eqref{wkb-U01-05},
\be\label{wkb-21-2}
2 i \d_t   \big(g_{2} - \frac{i}{2} \d_{t} g_{0}\big) - \Delta  g_{2}  + f(u_{a})_{2,1} = 0 \ \iff \  2 i \d_t   g_{2} +  \d_{tt} g_{0} - \Delta  g_{2}  + f(u_{a})_{2,1} = 0.
\ee
Here we shall impose a proper initial datum for $g_{2}$ in order to have as small as possible initial difference between $U$ and $U_{a}$. With the observation in \eqref{wkb-ini-o1} and \eqref{ini-g1}, there holds
\be\label{ini-diff-1}
U(0,\cdot) - U_{0}(0,\cdot) - \e U_{1}(0,\cdot) = 0.\nn
\ee
In order to obtain a possibly smallest initial difference between the exact solution and the WKB approximate solution, we shall choose the initial datum of $g_{2}$  such that
\be\label{g20-0}
U_{2}(0,\cdot) = 0.
\ee
By \eqref{U2-form} and the fact $g_{1}(0,\cdot) = 0$, in order to achieve \eqref{g20-0}, it suffices to impose
\be\label{g20-1}
\left\{
\begin{aligned} & i g_{2}(0,\cdot) + \d_{t} g_{0}(0,\cdot)  + \frac{3 i \l}{8} g_{0}^{3}(0,\cdot) + c.c. = 0, \\
& g_{2}(0,\cdot)  + \frac{\l}{8} g_{0}^{3}(0,\cdot) + c.c.  = 0.
\end{aligned}
\right.
\nn
\ee
This implies that
\ba\label{g20-2}
Im(g_{2}(0,\cdot)) & = Re\left[ \d_{t} g_{0}(0,\cdot)  + \frac{3i \l}{8} g_{0}^{3}(0,\cdot) \right] =  -\frac{1}{4} \Delta  \psi + \frac{21\l}{64} \phi^{2} \psi + \frac{9\l}{64} \psi^{3},\\
Re(g_{2}(0,\cdot)) & = - Re \left[  \frac{\l}{8} g_{0}^{3}(0,\cdot) \right]   =   -\frac{\l}{64} \big( \phi^{3} -3 \phi \psi^{2}\big),
\ea
where $Im(g)$ and $Re(g)$ represent the imaginary and real parts of $g$, respectively.

\medskip

By \eqref{U2-form}, \eqref{U3-form}, Lemma \ref{lem-1-pi1}, \eqref{wkb-21-1} and \eqref{wkb-21-2}, applying $L_{1}^{-1}$ to \eqref{wkb-21} gives
\ba\label{wkb-21-3}
 (1-\Pi_{1}) U_{4,1}     = -\d_t L_{1}^{-1} U_{2,1}  + L_{1}^{-1} A(\nabla) U_{3,1}  + L_{1}^{-1} \bp 0_d \\ -  f(u_{a})_{2,1}  \\ 0 \ep     =   \bp \nabla g_{3} \\  \frac{\d_{t}g_{2}}{2} \\ \frac{i \d_{t}g_{2}}{2}  \ep,
\nn
\ea
while for $\Pi_{1} U_{4,1},$ there exists a scalar function $g_{4}$ such that $\Pi_{1} U_{4,1} = \big(g_{4} - \frac{i}{2} \d_{t} g_{2}\big) e_{+}.$

\medskip

For $p=3$, equation \eqref{wkb-2} becomes
\ba\label{wkb-23}
 L_{3}U_{4,3}&=- \d_{t} U_{2,3} +  A(\nabla) U_{3,3} -    \bp 0_d \\ f(u_{a})_{2,3} \\ 0  \ep \\
 &=- \frac{\l \d_{t}(g_{0}^{3})}{8} \bp 0_d \\ 3i \\ 1  \ep+ \frac{\l}{8} \Delta (g_{0}^{3}) \bp 0_d \\ 1 \\ 0  \ep - f(u_{a})_{2,3}  \bp 0_d \\ 1 \\ 0  \ep,
 \nn
\ea
where, by \eqref{def-f2-1},  $f(u_{a})_{2,3}= 3\l \big(g_{0}^{2} g_{2} + \frac{\l}{4} |g_{0}|^{2} g_{0}^{3}\big).$
Thus,
\ba\label{wkb-23-2}
U_{4,3}  = - \frac{\l \d_{t}(g_{0}^{3})}{8} \bp 0_d \\ \frac{5}{4} \\ -\frac{3i}{4}  \ep + \Big[\frac{\l}{8} \Delta (g_{0}^{3})   - 3\l \big( g_{0}^{2} g_{2}  + \frac{\l}{4} |g_{0}|^{2} g_{0}^{3} \big) \Big] \bp 0_d \\ -\frac{3i}{8}\\ -\frac{1}{8}  \ep.
\ea

While for $p=5$, equation \eqref{wkb-2} becomes
\ba\label{wkb-25}
 L_{5}U_{4,5}      =  - \frac{3\l^{2}}{8}g_{0}^{5} \bp 0_d \\ 1 \\ 0  \ep \Longleftrightarrow U_{4,5} =  - \frac{3\l^{2}}{8}g_{0}^{5}  \bp 0_d \\ -\frac{5i}{24}\\ -\frac{1}{24}  \ep.
\ea

For $p\geq 7$, equation \eqref{wkb-2} is equivalent to
\ba\label{wkb-27}
U_{4,p}  = 0, \ \forall \, p\geq 7. \nn
\ea

Thus, $U_{4}$ is of the form:
\be\label{U4-form}
U_{4} = e^{i\th} \left[g_{4} e_{+} + \bp \nabla g_{3} \\ \d_{t} g_{2} \\ 0 \ep  \right] +  e^{3i\th} U_{4,3} +  e^{5i\th} U_{4,5} + c.c., \ \mbox{for some scalar function $g_{4}$,}\nn
\ee
where $U_{4,3}$ and $U_{4,5}$ are given in \eqref{wkb-23-2} and \eqref{wkb-25}, and $g_{2}$ is the solution to \eqref{wkb-21-2}--\eqref{g20-2}.

\subsection{Order $O(\e^{n})$}\label{sec-wkb-e-n}

Based on the WKB expansion up to order $O(\e^{2})$ in previous sections, we will deduce certain induction properties and obtain the complete form of the WKB solutions. 

Firstly, with the observation in Proposition \ref{prop-p-odd} and Remak \ref{rem-p-odd}, we shall impose
\be\label{Unp-p-even}
U_{n,p} = 0, \quad \forall n\in \{0,1,2,\cdots, K_{a}+2\}, \ \forall p\in \Z_{\rm even}.
\ee
 
\subsubsection{$U_{n,\pm 1}$.} For leading harmonics with $p=\pm1$, we have the following conclusion:
\begin{proposition}\label{prop-Un1}
There exist scalar functions $g_{n}, n=0,1,2,\cdots, K_{a}+2$ such that
\be\label{Un1-form}
 (1 - \Pi_{1}) U_{n,1} = \bp \nabla g_{n-1} \\  \frac{1}{2} \d_{t}g_{n-2} \\ \frac{i}{2} \d_{t}g_{n-2} \ep, \quad \Pi_{1} U_{n,1} = \big(g_{n} - \frac{i}{2} \d_{t} g_{n-2}\big) e_{+}, \quad U_{n,1} = g_{n} e_{+} + \bp \nabla g_{n-1} \\ \d_{t} g_{n-2} \\ 0 \ep.
\ee
We shall alway impose $g_{-2} = g_{-1} = 0$. Moreover, for each $n \in\{ 0,1,2,\cdots, K_{a}\}$, $g_{n}$ satisfies the Schr\"odinger equation:
\be\label{Schro-gn}
 2 i \d_t   g_{n} +  \d_{tt} g_{n-2} - \Delta  g_{n}  + f(u_{a})_{n,1} = 0.
\ee

\end{proposition}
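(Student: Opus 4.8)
The plan is to argue by induction on $n$, using the explicit WKB-cascade computations in Sections \ref{sec-wkb-e--2}--\ref{sec-wkb-e-2} as the base cases (they furnish $U_{0,1}$ through $U_{4,1}$ and the Schr\"odinger equations \eqref{Schro-gn} for $g_0,g_1,g_2$, with the conventions $g_{-2}=g_{-1}=0$), and then iterating the same two-step mechanism at general order: project the order-$O(\e^n)$ equation $\Phi_{n,1}=0$ with the spectral projector $\Pi_1$ to read off the Schr\"odinger equation for $g_n$, and with the partial inverse $L_1^{-1}$ to read off the structure of the next amplitude $U_{n+2,1}$.

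For the inductive step, assume the three identities in \eqref{Un1-form} hold for $U_{m,1}$ with $m\le n+1$ and that \eqref{Schro-gn} holds for $g_m$ with $m\le n-1$. Writing $\Phi_{n,1}=0$ as $\d_t U_{n,1}-A(\nabla)U_{n+1,1}+L_1 U_{n+2,1}=F(U_a)_{n,1}$ and applying $\Pi_1$: the term $L_1 U_{n+2,1}$ drops since $\Pi_1 L_1=0$, and by Lemma \ref{lem-wktr} $\Pi_1 A(\nabla)\Pi_1=0$, so only the $(1-\Pi_1)$-part of $U_{n+1,1}$ contributes to $\Pi_1 A(\nabla)U_{n+1,1}$. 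Substituting the inductive expression $(1-\Pi_1)U_{n+1,1}=(\nabla g_n,\tfrac12\d_t g_{n-1},\tfrac i2\d_t g_{n-1})^{\rm T}$, using the elementary identities \eqref{PiA-Pi12} together with the matrix $\Pi_1$ from \eqref{Pi-pm0}, and writing $\Pi_1 U_{n,1}=(g_n-\tfrac i2\d_t g_{n-2})e_+$, the $\Pi_1$-equation reduces (after clearing the factor $e_+$) exactly to \eqref{Schro-gn}. The Laplacian that for $g_0$ was produced by the second algebraic lemma (Lemma \ref{lem-2ndAlg}) is here supplied by the already-known structure of $U_{n+1,1}$. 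One also verifies at this point that $f(u_a)_{n,1}$ depends on $g_n$ only through the linear piece $3\lambda(g_0^2\bar g_n+2|g_0|^2 g_n)$: any cubic monomial in the components $u_{m,p}$ of total order $n$ whose largest index equals $n$ must multiply two order-zero amplitudes, and $U_{0,p}=0$ for $|p|\ge2$ forces both of these to have harmonic $\pm1$; the remaining higher-harmonic amplitudes $U_{m,3}$ depend (by a parallel, easier induction within the same cascade) only on $g_0,\dots,g_{m-2}$. Hence \eqref{Schro-gn} is a genuine linear Schr\"odinger equation for $g_n$ whose forcing is determined by the already-constructed $g_0,\dots,g_{n-1}$.

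Next, applying $L_1^{-1}$ to the same equation gives $(1-\Pi_1)U_{n+2,1}=-\d_t L_1^{-1}U_{n,1}+L_1^{-1}A(\nabla)U_{n+1,1}+L_1^{-1}F(U_a)_{n,1}$, and I would evaluate each term from the explicit $L_1^{-1}$ in \eqref{Lp-p=1}, Lemma \ref{lem-1-pi1}, and the inductive forms of $U_{n,1}$ and $U_{n+1,1}$ (noting $L_1^{-1}e_+=0$). The highest-derivative contributions $\nabla\d_t g_{n-1}$ in the $w$-slot cancel and leave $\nabla g_{n+1}$; in the $v$- and $u$-slots there appears the common combination $\d_{tt}g_{n-2}-\Delta g_n+f(u_a)_{n,1}$, which by \eqref{Schro-gn} for $g_n$ equals $-2i\,\d_t g_n$, producing exactly $\tfrac12\d_t g_n$ and $\tfrac i2\d_t g_n$. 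Since $\ker L_1=\mathrm{span}\{e_+\}$, the complementary part $\Pi_1 U_{n+2,1}$ is a scalar multiple of $e_+$; defining $g_{n+2}$ by $\Pi_1 U_{n+2,1}=(g_{n+2}-\tfrac i2\d_t g_n)e_+$ and summing the two parts yields the three displayed identities of \eqref{Un1-form} for $U_{n+2,1}$. This advances the induction; running it up to $n=K_a$ produces all $U_{n,1}$ with $n\le K_a+2$ and all equations \eqref{Schro-gn} with $n\le K_a$.

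All the manipulations are elementary block linear algebra in the $(d+2)\times(d+2)$ splitting (with $d=3$), so the only real difficulty is organizational: tracking precisely which $\d_{tt}$-shift---$g_{n-2}$ versus $g_{n-1}$---enters at which order, and invoking \eqref{Schro-gn} for $g_n$ at exactly the step where it is needed to simplify the $v$- and $u$-components of $(1-\Pi_1)U_{n+2,1}$. The one conceptual point deserving a clean statement is that $f(u_a)_{n,1}$ conceals no further dependence on $g_n$ through higher harmonics; this rests on $U_{0,p}=0$ for $|p|\ge2$ together with the companion structural facts on the amplitudes $U_{m,p}$, $|p|\ge3$, established in the same WKB cascade.
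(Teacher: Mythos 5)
Your proposal is correct and follows essentially the same route as the paper's proof: induction on $n$, with the base cases coming from the explicit order-by-order cascade computations, the inductive step obtained by applying $\Pi_1$ (using $\Pi_1 L_1=0$, Lemma \ref{lem-wktr}, and \eqref{PiA-Pi12}) to read off \eqref{Schro-gn}, and then applying $L_1^{-1}$ and substituting \eqref{Schro-gn} for $g_n$ to collapse the $v$- and $u$-components of $(1-\Pi_1)U_{n+2,1}$ to $\tfrac12\d_t g_n$ and $\tfrac i2\d_t g_n$. The added remark about the structure of $f(u_a)_{n,1}$ is accurate but not needed for this proposition (the paper defers that to Propositions \ref{gn=0-n-odd} and \ref{prop-Unp-gn}).
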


\begin{proof} We will prove Proposition \ref{prop-Un1} by induction. In the previous sections, we have shown \eqref{Un1-form} for $n=0,1,2,3,4$ and  \eqref{Schro-gn} for $n=0,1,2$.  Let $k \geq 3$. By induction we suppose \eqref{Un1-form} holds for $n \leq k+1$ and  \eqref{Schro-gn} holds for $n \leq k-1$. Now we shall prove \eqref{Un1-form} for $n = k+2$ and  \eqref{Schro-gn} for $n = k$. With $n=k$ and $p=1$, the equation \eqref{phi-n-p} ($\Phi_{k,1} = 0$) becomes
\be\label{wkb-n-2}
\d_t U_{k,1}-A(\nabla) U_{k+1,1} + L_{1} U_{k+2,1} = F(U_{a})_{k,1}, \quad F(U_{a})_{k,1} = ( 0_d^{\rm T}, - f(u_{a})_{k,1}, 0)^{\rm T}.
\ee

Applying $\Pi_{1}$ to \eqref{wkb-n-2} implies
\be\label{wkb-n-3}
\d_t \Pi_{1}U_{k,1} - \Pi_{1}A(\nabla) U_{k+1,1}  = \Pi_{1}F(U_{a})_{k,1}.
\ee
By the induction assumption \eqref{Un1-form} with $n=k, k+1$ and using \eqref{PiA-Pi12}, we deduce from \eqref{wkb-n-3} that
\be\label{wkb-n-4}
2 i \d_t   \big(g_{k} - \frac{i}{2} \d_{t} g_{k-2}\big) - \Delta  g_{k}  + f(u_{a})_{k,1} = 0 \ \iff \  2 i \d_t   g_{k} +  \d_{tt} g_{k-2} - \Delta  g_{k}  + f(u_{a})_{k,1} = 0,
\ee
which is exactly \eqref{Schro-gn} with $n = k$.

Applying $L_{1}^{-1}$ to \eqref{wkb-n-2} gives
\ba\label{wkb-n-5}
(1-\Pi_{1}) U_{k+2,1}  = -\d_t L_{1}^{-1} U_{k,1}  + L_{1}^{-1} A(\nabla) U_{k+1,1}  + L_{1}^{-1} F(U_{a})_{k,1}.
\ea
By induction assumption \eqref{Un1-form} with $n=k$ and the fact $L_{1}^{-1}\Pi_{1} = \Pi_{1} L_{1}^{-1} = 0$ , we have
\ba\label{wkb-n-6}
 -\d_t L_{1}^{-1} U_{k,1}    =  -\d_t L_{1}^{-1} (1-\Pi_{1})U_{k,1} =    -\d_t L_{1}^{-1} \bp  \nabla g_{k-1} \\  \frac{1}{2} \d_{t}g_{k-2} \\ \frac{i}{2} \d_{t}g_{k-2} \ep  = -   \bp - i \d_{t}\nabla g_{k-1} \\  -\frac{i}{4} \d_{tt}g_{k-2} \\ \frac{1}{4} \d_{tt}g_{k-2} \ep.
 \nn
\ea
By induction assumption \eqref{Un1-form} with $n=k+1$ we have
\ba\label{wkb-n-7}
 L_{1}^{-1} A(\nabla) U_{k+1,1}  =   L_{1}^{-1} A(\nabla) \left[ g_{k+1} e_{+} + \bp \nabla g_{k} \\ \d_{t}g_{k-1} \\ 0 \ep \right]  = \bp \nabla g_{k+1} \\ 0 \\ 0 \ep  + \bp - i \nabla \d_{t} g_{k-1} \\  - \frac{i}{4} \Delta  g_{k}\\ \frac{1}{4} \Delta  g_{k} \ep.
 \nn
 \ea
Direct computation gives
\ba\label{wkb-n-8}
 L_{1}^{-1} F(U_{a})_{k,1}= L_{1}^{-1} \bp 0_d \\ - f(u_{a})_{k,1} \\ 0 \ep  & =  - \bp 0_d \\  -\frac{i}{4} f(u_{a})_{k,1} \\  \frac{1}{4} f(u_{a})_{k,1} \ep.
 \ea
From \eqref{wkb-n-5}--\eqref{wkb-n-8}, together with \eqref{wkb-n-4}, we can deduce
\ba\label{wkb-n-9}
(1-\Pi_{1}) U_{k+2,1}  & = \bp \nabla g_{k+1} \\ 0 \\ 0 \ep  + \bp 0_d \\ -\frac{i}{4}  \big( - \d_{tt}g_{k-2} +  \Delta  g_{k} - f(u_{a})_{k,1}\big)\\ \frac{1}{4}  \big( -\d_{tt}g_{k-2} +  \Delta  g_{k} - f(u_{a})_{k,1}\big) \ep \\
& = \bp \nabla g_{k+1} \\ 0 \\ 0 \ep  + \bp 0_d \\ -\frac{i}{4}  \big( 2 i \d_{t} g_{k}\big )\\ \frac{1}{4}  \big( 2 i \d_{t} g_{k}\big) \ep =  \bp  \nabla g_{k+1}  \\ \frac{1}{2} \d_{t}g_{k} \\\frac{i}{2} \d_{t}g_{k} \ep,
\nn
\ea
which is exactly the first result in \eqref{Un1-form} with $n=k+2$. Moreover,  the second result in \eqref{Un1-form} with $n=k+2$ concerning the form of  $\Pi_{1} U_{k+2,1} $ follows from the structure of $\ker L_{1}$ in \eqref{intr-pola-0}. The proof is thus completed by induction principal.
\end{proof}

\subsubsection{$U_{n,p}$ with $p$ large.}
For $p$ large, we can show that $U_{n,p} = 0$. More precisely, we have
\begin{proposition}\label{prop-Unp-p-large}
For each $n = 0,1,2,\cdots, K_{a}+2$, let
\be\label{p(n)-large}
p(n) := n+1  \ \mbox{for $n$ even}, \quad p(n) := n \  \mbox{for $n$ odd}.\nn
\ee
Then $U_{n,p} = 0$ for $|p|\geq p(n)+1$. Thus the harmonic set of order $n$ is
\be\label{Hn-form}
{\cal{H}}_{n} = \{p\in \Z_{\rm odd}: |p| \leq p(n)\}.
\ee
\end{proposition}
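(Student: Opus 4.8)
The plan is to establish the vanishing assertion $U_{n,p}=0$ for $|p|\ge p(n)+1$ by strong induction on $n$, from $n=0$ up to $n=K_a+2$, and then to obtain \eqref{Hn-form} by intersecting with the odd-harmonic constraint \eqref{Unp-p-even}. The mechanism is the cascade relation \eqref{phi-n-p}: for $n\ge 2$ and $|p|\ge 2$ the matrix $L_p=ipI_{d+2}+A_0$ is invertible by \eqref{Lp-p>2}, so $\Phi_{n-2,p}=0$ rewrites as
$$
U_{n,p}=L_p^{-1}\Big(-\d_t U_{n-2,p}+A(\nabla)U_{n-1,p}+F(U_a)_{n-2,p}\Big);
$$
since $p(m)+1\ge 2$ for every $m\ge 0$, the range $|p|\ge p(n)+1$ lies entirely inside the invertible regime, and it will suffice to show that each of the three terms on the right vanishes there. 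I would take the base cases $n=0$ and $n=1$ directly from the explicit formulas \eqref{U0-form} and \eqref{U1-form}, in which the only harmonics present are $p=\pm1=\pm p(0)=\pm p(1)$ (the case $n=1$ can also be recovered from $\Phi_{-1,p}=0$ and the $n=0$ case).

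For the inductive step I would fix $n$ with $2\le n\le K_a+2$, assume the statement for all smaller orders, and note first that $p(\cdot)$ is nondecreasing with $p(n)=p(n-2)+2$; hence $|p|\ge p(n)+1$ forces both $|p|\ge p(n-2)+1$ and $|p|\ge p(n-1)+1$, so the inductive hypothesis kills $\d_t U_{n-2,p}$ and $A(\nabla)U_{n-1,p}$. For the nonlinear term, \eqref{Fnp} shows that $F(U_a)_{n-2,p}$ is a finite sum of cubic products $u_{n_1,p_1}u_{n_2,p_2}u_{n_3,p_3}$ with $n_1+n_2+n_3=n-2$ and $p_1+p_2+p_3=p$; a nonzero summand forces $|p_i|\le p(n_i)$ by the inductive hypothesis, hence $|p|\le p(n_1)+p(n_2)+p(n_3)$. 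The key point is then the elementary estimate
$$
p(m_1)+p(m_2)+p(m_3)\le p(m_1+m_2+m_3+2),\qquad m_i\in\Z_{\ge 0},
$$
which, via $p(k)=2\lfloor k/2\rfloor+1$, reduces to $\lfloor m_1/2\rfloor+\lfloor m_2/2\rfloor+\lfloor m_3/2\rfloor\le\lfloor(m_1+m_2+m_3)/2\rfloor$ and follows from two uses of $\lfloor a/2\rfloor+\lfloor b/2\rfloor\le\lfloor(a+b)/2\rfloor$. Taking $m_1+m_2+m_3=n-2$ gives $|p|\le p(n)$, contradicting $|p|\ge p(n)+1$, so $F(U_a)_{n-2,p}=0$ as well; the displayed identity then yields $U_{n,p}=0$, closing the induction.

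To finish, I would combine the bound just proved with $U_{n,p}=0$ for even $p$ (from \eqref{Unp-p-even}, which rests on Proposition \ref{prop-p-odd}): the only harmonics that can be active at order $n$ are odd $p$ with $|p|\le p(n)$, and declaring $\calH_n$ to be exactly this set gives \eqref{Hn-form}. The main obstacle I anticipate is purely the combinatorial bookkeeping --- getting the ``$+2$'' shift in the floor inequality to match the two-order gain in the cascade, and keeping the odd/even parity convention consistent throughout; everything else is a routine rearrangement of \eqref{phi-n-p} using the explicit (partial) inverses $L_p^{-1}$ already computed in Section \ref{sec:wkb}.
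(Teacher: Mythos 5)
Your proof is correct and follows essentially the same route as the paper: induct on $n$, rewrite $\Phi_{n-2,p}=0$ using the invertibility of $L_p$ for $|p|\ge 2$, kill $\partial_t U_{n-2,p}$ and $A(\nabla)U_{n-1,p}$ by the inductive hypothesis, and bound the harmonic support of the cubic convolution $F(U_a)_{n-2,p}$. The only difference is cosmetic: where the paper splits into the cases $k$ odd (at least one $n_i$ odd, so $p(n_i)=n_i$ there) and $k$ even, you package the same arithmetic uniformly via $p(k)=2\lfloor k/2\rfloor+1$ and the superadditivity $\lfloor m_1/2\rfloor+\lfloor m_2/2\rfloor+\lfloor m_3/2\rfloor\le\lfloor(m_1+m_2+m_3)/2\rfloor$, which is arguably cleaner but not a different argument.
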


\begin{proof}  We have shown $U_{n,p} = 0$ for $|p|\geq p(n)+1$ with $n=0,1,2,3,4$. Let $k\geq 3.$ By induction we suppose  $U_{n,p} = 0$ for $|p|\geq p(n)+1$ with $n=0,1,2,\cdots, k+1$.  Now we show $U_{k+2,p} = 0$ for $|p|\geq p(k+2)+1$.  The equation $\Phi_{k,p} = 0$ reads
\ba\label{p(n)-large-1}
 L_{p} U_{k+2,p}  = - \d_t U_{k,p} + A(\nabla) U_{k+1,p} + F(U_a)_{k,p}.\nn
\ea
Clearly
$$
 p(k+2)+1\geq p(k+1)+1\geq p(k)+1.
$$
Then for each $|p|\geq p(k+2)+1$, there holds $U_{k,p}  = U_{k+1,p}  = 0$ and therefore
\ba\label{p(n)-large-2}
 L_{p} U_{k+2,p}  =  F(U_a)_{k,p} = ( 0_d^{\rm T}, -f(u_{a})_{k,p}, 0)^{\rm T}.\nn
\ea
Recall
\ba \label{p(n)-large-3}
 f(u_{a})_{k,p} = \sum_{n_{1}+ n_{2} + n_{3} = k} \; \sum_{p_{1}+p_{2} + p_{3} = p} u_{n_{1},p_{1}} u_{n_{3},p_{1}} u_{n_{3},p_{1}}.\nn
\ea

If $k+2$ is odd, so is $k$, and  $p(k+2) = k+2$,  $p(k) = k$. For any nonnegative integers $n_{1}, n_{2}, n_{3}$ such that $n_{1} + n_{2} + n_{3} = k$, at least one of $n_{i}, i=1,2,3$ is odd. Without loss of generality, we may assume $n_{1}$ is odd. Then by induction assumption, in order to make sure $f(u_{a})_{n,p} \neq 0$, necessarily one needs
$$
|p_{1}| \leq n_{1}, \quad |p_{2} | \leq n_{2} +1, \quad |p_{3} |\leq n_{3} +1,
$$
which ensures
$$
|p| =  |p_{1} + p_{2} +p_{3} | \leq |p_{1} | + |p_{2}| + | p_{3} |  \leq  n_{1}+ n_{2} + n_{3} +2 = k +2.
$$
Thus for each $|p|\geq p(k+2) +1 = k+2 +1$, there holds $f(u_{a})_{k,p} = 0$ and thus $U_{k+2,p} = 0.$

The result when $k+2$ is even can be derived similarly.   
\end{proof}

\subsubsection{Complete form of $U_{n}$}  Now we are ready to give the complete form of the WKB solutions. We have shown the specific form of $U_{n, \pm1}$ in Proposition \ref{prop-Un1}.  Now we focus on the form of $U_{n,p}$ with $p\geq 3$. By \eqref{Unp-p-even} and Proposition \ref{prop-Unp-p-large}, it is sufficient to consider $U_{n,p}$ with $p\in \{3,5,\cdots, p(n)\}$. In the following proposition, we give the specific forms of WKB solutions together with the induction relations, both of which are important to derive the specific growth in time estimates of $U_{n,p}$ in Section \ref{sec:WKB-est-defo}.
\begin{proposition}\label{gn=0-n-odd}
For each $n\in \{0,1,2,\cdots, K_{a}\}\cap \Z_{\rm odd}$, we impose zero initial datum $g_{n}(0,\cdot) = 0$.  Then there hold the following  assertions:

(i) For each $n\in \{0,1,2,\cdots, K_{a}\}\cap \Z_{\rm odd}$, $ g_{n} = 0$.

(ii) For each $n\in \{0,1,2,\cdots, K_{a} + 2\}\cap \Z_{\rm even}$ and for each $p\in \{3,5,\cdots, n+1\}$, $U_{n,p}$ has the form
\ba\label{Unp-even-1}
U_{n,p} & = \bp 0_d \\ v_{n,p} \\ u_{n,p} \ep = \bp 0_d \\  \d_{t} u_{n-2,p} + i p u_{n,p} \\ u_{n,p} \ep, \\
 u_{n,p} & = \frac{1}{1-p^{2}} \Big(- f(u_{a})_{n-2,p} - \d^{2}_{t} u_{n-4,p} + \Delta  u_{n-2,p} -  2ip \d_{t} u_{n-2,p} \Big) .
\ea
Then for each $n\in \{0,1,2,\cdots, K_{a} + 2\}\cap \Z_{\rm even}$, $U_{n}$ is of the form
\ba\label{Unp-even}
U_{n}  = e^{i\th} \left[g_{n} e_{+} + \bp 0_d\\ \d_{t} g_{n-2} \\ 0 \ep  \right] +  \sum_{p=3}^{n+1}e^{ip\th} U_{n,p}+ c.c.,\nn
\ea
where $U_{n,p}, p\geq 3$ satisfies \eqref{Unp-even-1}.

(iii) For each $n\in \{1,2,\cdots, K_{a} + 2\}\cap \Z_{\rm odd}$ and for each $p\in \{3,5,\cdots, n\}$, $U_{n,p}$ has the form
\be\label{Unp-odd-1}
U_{n,p} = \bp w_{n,p} \\ 0 \\ 0 \ep = \bp \nabla u_{n-1,p} \\ 0 \\ 0 \ep.
\ee
Then for each $n\in \{1,2,\cdots, K_{a} + 2\}\cap \Z_{\rm odd}$, $U_{n}$ is of the form
\ba\label{Unp-odd}
U_{n}  =   e^{i\th} \left[g_{n} e_{+} + \bp \nabla g_{n-1} \\ 0 \\ 0 \ep  \right]   +  \sum_{p=3}^{n}e^{ip\th} U_{n,p}+ c.c.,\nn
\ea
where $U_{n,p}, p \geq 3$ satisfies \eqref{Unp-odd-1}.

(iv) There holds $u_{n,1} = g_{n}$ for each  $n\in \{0, 1,2,\cdots, K_{a} + 2\}.$
 
\end{proposition}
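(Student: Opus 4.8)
The plan is to prove assertions (i)--(iv) simultaneously by strong induction on $n$, the base cases $n \leq 4$ being exactly the explicit computations of Sections \ref{sec-wkb-e--2}--\ref{sec-wkb-e-2} (with $g_{1}\equiv 0$ supplied by \eqref{ini-g1}). Assertion (iv) is not really part of the induction: Proposition \ref{prop-Un1} gives $U_{n,1} = g_{n} e_{+} + (\nabla g_{n-1}, \d_{t}g_{n-2}, 0)^{\rm T}$, and since the last component of $e_{+} = (0_{d}^{\rm T}, i, 1)^{\rm T}$ equals $1$ while the last component of the remaining vector vanishes, $u_{n,1} = g_{n}$ for every $n$. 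So the substance is the inductive step: granting the claimed structural facts for all indices $\leq n$, deduce them for $n+1$.

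The driving identity is $\Phi_{n-1,p} = 0$, i.e.
\be\label{pf-gn-drive}
L_{p} U_{n+1,p} = -\d_{t} U_{n-1,p} + A(\nabla) U_{n,p} + F(U_{a})_{n-1,p}, \qquad p \geq 3,
\ee
where for $p \geq 3$ the matrix $L_{p}$ is invertible with the block-diagonal inverse \eqref{Lp-p>2}, a $d\times d$ block on the $w$-slot and a $2\times 2$ block on the $(v,u)$-slots. Two structural features of \eqref{pf-gn-drive} are used throughout: $A(\nabla)$ interchanges the two slots, sending $(w,0,0)^{\rm T}$ to $(0,\dive w,0)^{\rm T}$ and $(0,v,u)^{\rm T}$ to $(\nabla v,0,0)^{\rm T}$; and $F(U_{a})_{n-1,p} = (0_{d}^{\rm T}, -f(u_{a})_{n-1,p}, 0)^{\rm T}$ occupies the $v$-slot alone. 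Hence, when $n+1$ is odd (so $n-1$ is odd and $n$ even), the inductive forms \eqref{Unp-odd-1} for $U_{n-1,p}$ and \eqref{Unp-even-1} for $U_{n,p}$ place the whole right-hand side of \eqref{pf-gn-drive} in the $w$-slot \emph{provided} $f(u_{a})_{n-1,p} = 0$; then $L_{p}^{-1}$ leaves it there and, substituting $v_{n,p} = \d_{t}u_{n-2,p} + ip\,u_{n,p}$, one gets $U_{n+1,p} = (\nabla u_{n,p},0,0)^{\rm T}$, which is \eqref{Unp-odd-1} at index $n+1$. When $n+1$ is even (so $n-1$ even, $n$ odd), the same substitution shows the right-hand side of \eqref{pf-gn-drive} has vanishing $w$-component, hence $U_{n+1,p} = (0, v_{n+1,p}, u_{n+1,p})^{\rm T}$; reading off the $(v,u)$-block of $L_{p}^{-1}$ and using $\d_{t}v_{n-1,p} = \d^{2}_{t}u_{n-3,p} + ip\,\d_{t}u_{n-1,p}$ produces precisely the recursion \eqref{Unp-even-1} at index $n+1$. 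This last manipulation is the same cancellation already performed in the order-$\e^{2}$ computation of Section \ref{sec-wkb-e-2}.

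What closes the circle is assertion (i), which furnishes the vanishing $f(u_{a})_{n-1,p} = 0$ needed above for odd $n-1$. By the inductive hypothesis $g_{m} = 0$ for every odd $m \leq n$; together with (iii) (only the $w$-component of $U_{m,p}$ survives when $m$ is odd) and (iv), this forces $u_{m} \equiv 0$ for every odd $m \leq n$. Since $f(u) = \l u^{3}$, $f(u_{a})_{m} = \l\sum_{m_{1}+m_{2}+m_{3}=m} u_{m_{1}}u_{m_{2}}u_{m_{3}}$; for $m$ odd, an odd number of the $m_{i}$ is odd, so in any surviving product either the unique odd index must equal $m$ (whence the other two vanish), or all three odd indices are strictly less than $m$ --- the latter case being impossible to survive since then $u_{m_{i}} \equiv 0$. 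Thus $f(u_{a})_{m} = 3\l u_{0}^{2}u_{m}$ for odd $m$, which is $\equiv 0$ once $u_{m} = 0$, and moreover identifies $f(u_{a})_{m,1} = 3\l(g_{0}^{2}\bar g_{m} + 2|g_{0}|^{2}g_{m})$. Consequently the inductive step is executed in this order: first use \eqref{pf-gn-drive} (with $f(u_{a})_{n-1,\cdot} = 0$, already in hand) to obtain the forms of $U_{n+1,p}$, $p \geq 3$; then, if $n+1$ is odd, the Schr\"odinger equation \eqref{Schro-gn} with $n$ replaced by $n+1$ reduces, using $g_{n-1} = 0$ and the just-proved $u_{n+1,p} = 0$ for $p \geq 3$, to $2i\d_{t}g_{n+1} - \Delta g_{n+1} + 3\l(g_{0}^{2}\bar g_{n+1} + 2|g_{0}|^{2}g_{n+1}) = 0$, a linear homogeneous equation with zero initial datum, so $g_{n+1} \equiv 0$ by uniqueness. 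Assembling $U_{n+1,1}$ from Proposition \ref{prop-Un1} (with $g_{n} = 0$ when $n$ is odd) together with the $U_{n+1,p}$, $p \geq 3$, and invoking Proposition \ref{prop-Unp-p-large} for the harmonic set, then yields the displayed closed forms of $U_{n+1}$.

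I expect the genuine difficulty to be purely organizational: the four assertions are mutually entangled, and the steps within a single induction stage must be sequenced so that $u_{m}\equiv 0$ (odd $m$) is secured before it is used to annihilate $f(u_{a})_{n-1,p}$, and so that the structure of $U_{n+1,p}$, $p \geq 3$, is secured before it is used to linearize the equation for $g_{n+1}$. A minor extra check concerns the top harmonic $p = p(n+1)$ from Proposition \ref{prop-Unp-p-large}: there $u_{n-1,p}$, $u_{n-3,p}$ and their time derivatives vanish for harmonic-range reasons, so the recursion \eqref{Unp-even-1} degenerates to $u_{n+1,p} = (1-p^{2})^{-1}\bigl(-f(u_{a})_{n-1,p}\bigr)$, which is the stated formula with the absent terms dropped.
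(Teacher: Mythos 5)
Your proposal is correct and follows essentially the same strategy as the paper's own proof: an induction driven by the cascade equation $\Phi_{n-1,p}=0$ together with the explicit block structure of $L_{p}^{-1}$ for $|p|\geq 3$ from \eqref{Lp-p>2}, the observation that for odd $m$ the source $f(u_{a})_{m}$ collapses to $3\lambda u_{0}^{2}u_{m}$ once the lower odd amplitudes are known to vanish, and uniqueness for the resulting linear homogeneous Schr\"odinger problem with zero initial datum. The only cosmetic difference is the bookkeeping of the induction: the paper advances an odd index $k$ by $2$, carrying (ii) and (iii) a few indices ahead of (i) and proving them in a staircase order, whereas you advance $n$ by $1$ and peel off the assertion appropriate to the parity of $n+1$, which is an equivalent (and arguably tidier) packaging of the same argument.
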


\begin{proof} Assertion (iv) follows directly from Assertions (i), (ii) and (iii). We have shown in Section \ref{sec-wkb-e-1} that $g_{1}  = 0$ if the initial datum $g_{1}(0,\cdot) = 0$.  As a result, by  \eqref{U1-form}, \eqref{U2-form}, \eqref{U3-form}, \eqref{wkb-23-2} and \eqref{wkb-25}, together with the fact $g_{1} = 0$, we know that: Assertion (i) holds for $n=1$;  Assertion (ii) holds for $n=0,2,4$; and Assertion (iii) holds for $n=1,3$.

Let $k\in \{1,2,\cdots, K_{a}\} \cap \Z_{\rm odd}$. By induction we assume that: Assertion (i) holds for $n= 1, 3, \cdots, k$;  Assertion (ii) holds for $n = 0,2,\cdots, k+3$; and Assertion (iii) holds for $n = 1,3,\cdots, k+2$.  Then when $n=k+2$, the Schr\"odinger equation \eqref{Schro-gn}  becomes
\be\label{Schro-g-k+2}
 2 i \d_t   g_{k+2}  - \Delta  g_{k+2}  + f(u_{a})_{k+2,1} = 0,  \ \ \mbox{with} \  f(u_{a})_{k+2} = \sum_{n_{1}+n_{2}+n_{3} = k+2} u_{n_{1}} u_{n_{2}} u_{n_{3}}.
\ee
Since $k+2$ is odd, to have $n_{1}+n_{2}+n_{3} = k+2$, at least one of $n_{1}, n_{2}, n_{3}$ is odd.  If $n_{i} $ is odd and $n_{i} \leq k$, by induction assumptions for Assertions (i) and (iii), we know that $g_{n_{i}} = 0$ and $u_{n_{i}} = 0$. Thus, together with induction assumption for Assertion (iii) with $n=k+2$,
\be\label{f(ua)-k+2}
f(u_{a})_{k+2} = 3 u_{0}^{2} u_{k+2} = 3\l \left[\big(e^{i\th} g_{0} +e^{-i\th} \bar g_{0} \big)^{2} \big(  e^{i\th} g_{k+2}  + e^{-i\th} \bar g_{k+2}   \big)\right].
\ee
This implies
\be\label{f-k+2-1}
f(u_{a})_{k+2,1} = 3\l\left(2|g_{0}|^{2} g_{k+2} + g_{0}^{2} \bar g_{k+2} \right).\nn
 \ee
Then, with initial datum $g_{k+2}(0, \cdot) = 0$, the equation \eqref{Schro-g-k+2} admits a unique trivial solution $g_{k+2} = 0$. This corresponds to Assertion (i) with $n=k+2$.

\medskip

We next prove Assertion (iii) with $n=k+4$.  By \eqref{Un1-form} in Proposition \ref{prop-Un1} and $g_{k+2} = 0$,  we know that
\be\label{U-k+4-1}
U_{k+4,1} = g_{k+4} e_{+} + \bp \nabla g_{k+3} \\ \d_{t} g_{k+2} \\ 0 \ep =  g_{k+4} e_{+} + \bp \nabla g_{k+3} \\ 0 \\ 0 \ep.\nn
\ee
For $p\geq 3$, the equation $\Phi_{k+2,p} = 0$ reads
\ba\label{Lp-k+4-p}
 L_{p} U_{k+4,p}  = - \d_t U_{k+2,p} + A(\nabla) U_{k+3,p} + F(U_a)_{k+2,p}.
\ea
By induction assumptions, we know that
$$
\d_{t}U_{k+2,p} = \bp \d_{t} w_{k+2,p} \\ 0 \\ 0 \ep, \quad A(\nabla) U_{k+3,p}  = A(\nabla)  \bp 0_d \\ v_{k+3,p} \\ u_{k+3,p} \ep = \bp  \nabla v_{k+3,p} \\ 0 \\ 0  \ep.
$$
Since $k+2$ is odd, we know $f(u_{a})_{k+2}$ takes the form \eqref{f(ua)-k+2}. Together with the result $g_{k+2} = 0$, one has
\ba\label{FUa-k+2}
F(U_{a})_{k+2} =( 0_d^{\rm T}, - f(u_{a})_{k+2}, 0 )^{\rm T} = 0.
\ea
Thus, by the form of $L_{p}^{-1}$ in  \eqref{Lp-p>2} and by \eqref{Lp-k+4-p}--\eqref{FUa-k+2}, we have
\be\label{U-k+4-3}
U_{k+4,p} = \bp w_{k+4,p} \\ 0 \\ 0 \ep  \ \mbox{with} \ w_{k+4,p} = - \frac{i}{p} \big(  \d_{t} w_{k+2,p} + \nabla v_{k+3,p} \big).
\ee
By induction assumptions, we know
$$
w_{k+2,p} = \nabla u_{k+1,p}, \quad v_{k+3,p} = \d_{t} u_{k+1,p} + ip u_{k+3,p}.
$$
Together with \eqref{U-k+4-3}, we obtain $w_{k+4,p} = \nabla u_{k+3,p}$ and thus prove Assertion (iii) with $n=k+4$.

\medskip

Next, by Assertion (iii) with $n=k+4$ we have shown in the previous step, similar as the argument of showing $g_{k+2} = 0$, we can prove $g_{k+4} = 0$.

\medskip

We finally prove Assertion (ii) with $n=k+5$.  By \eqref{Un1-form} in Proposition \ref{prop-Un1} and $g_{k+4} = 0$,  we can show that
\be\label{U-k+3-1}
U_{k+5,1} = \bp \nabla g_{k+4} \\  \frac{1}{2} \d_{t}g_{k+3} \\ \frac{i}{2} \d_{t}g_{k+4} \ep + \big(g_{k+5} - \frac{i}{2} \d_{t} g_{k+3}\big) e_{+} = g_{k+5} e_{+} + \bp 0_d \\ \d_{t} g_{k+3} \\ 0 \ep.\nn
\ee
For $p\geq 3$, the equation $\Phi_{k+3,p} = 0$ reads
\ba\label{Lp-k+3-p}
 L_{p} U_{k+5,p}  = - \d_t U_{k+3,p} + A(\nabla) U_{k+4,p} + F(U_a)_{k+3,p}.\nn
\ea
By induction assumptions and Assertion (iii) with $n=k+4$ we have shown in the previous step, we know that
\ba\label{Lp-k+3-p-2}
&\d_{t}U_{k+3,p} = \bp 0_d \\ \d_{t} v_{k+3,p} \\ \d_{t} u_{k+3,p} \ep
=  \bp 0_d \\ \d_{t}^2 u_{k+1,p} + ip \d_{t}u_{k+3,p} \\ \d_{t} u_{k+3,p} \ep
= \d_{t}^{2} u_{k+1,p}  \bp 0_d \\1  \\ 0  \ep + \d_{t} u_{k+3,p} \bp 0_d \\ ip  \\ 1  \ep,  \\
&A(\nabla) U_{k+4,p}  = A(\nabla) \bp w_{k+4,p} \\ 0 \\ 0 \ep  = \Delta  u_{k+3,p} \bp 0_d \\1  \\ 0  \ep , \ \ F(U_a)_{k+3,p} = -f(u_{a})_{k+3,p}\bp 0_d \\ 1 \\0 \ep.
\ea
By the form of $L_{p}^{-1}$ in  \eqref{Lp-p>2}, we deduce from \eqref{Lp-k+3-p-2} that Assertion (ii) holds with $n=k+5$. The proof is completed.
\end{proof}

We introduce the notation
\ba\label{def-dn}
\d_{x}^{n} : = (\d_{x}^{\a})_{\a\in \N^{d}, |\a| \leq n} = (\d_{x_{1}}^{\a_{1}} \d_{x_{2}}^{\a_{2}}  \cdots \d_{x_{d}}^{\a_{d}} )_{\a = (\a_{1}, \a_{2}, \cdots, \a_{d})\in \N^{d}, |\a| \leq n} , \ \mbox{$n \in \Z_{+}$}. \nn
\ea
We use $\mathcal{P}_{n}$ to denote a polynomial of  multiple variables of some degree depending on $n$, whose form may differ from line to line.

A direct corollary of Propositions \ref{prop-Un1} and \ref{gn=0-n-odd} is the following
\begin{corollary}\label{prop-Unp-gn}
For each  $n\in \{0,1,2,\cdots, K_{a}\}\cap \Z_{\rm odd}$, we impose zero initial datum $g_{n}(0,\cdot) = 0$.  Then for each $n\in \{0,1,2,\cdots, K_{a}+2\}\cap \Z_{\rm even}$ and each $p\geq 3$,
 \be\label{Unp-unp-even}
u_{n,p} = \calP_{n}(\d^{n-2}_{x} (g_{0}, \bar g_{0}), \d^{n-4}_{x} (g_{2}, \bar g_{2}) \cdots, g_{n-2}, \bar g_{n-2}).\nn
\ee
Consequently, we have:

(i) For each $n\in \{2,\cdots, K_{a}+2\}\cap \Z_{\rm even}$ and each $p\geq 3$,
\be\label{Unp-gn-even}
U_{n,p} = \calP_{n}\left(\d^{n-2}_{x} (g_{0}, \bar g_{0}), \d^{n-4}_{x} (g_{2}, \bar g_{2}) \cdots, g_{n-2}, \bar g_{n-2}\right).\nn
\ee

(ii) For each $n\in \{2,\cdots, K_{a}+2\}\cap \Z_{\rm odd}$ and each $p\geq 3$,
\be\label{Unp-gn-odd}
U_{n,p} =\calP_{n}\left(\d^{n-2}_{x} (g_{0}, \bar g_{0}), \d^{n-4}_{x} ( g_{2}, \bar g_{2}), \cdots, \d_{x} (g_{n-3}, \bar g_{n-3})\right).\nn
\ee
 
\end{corollary}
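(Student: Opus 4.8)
The plan is to prove the asserted formula for $u_{n,p}$ (even $n$, $p\geq 3$) by strong induction on the order $n$, and then to read off (i) and (ii). It helps to call a function \emph{of weight $\leq N$} if it is a polynomial in the quantities $\d_{x}^{\a}g_{2k},\d_{x}^{\a}\bar g_{2k}$ ($\a\in\N^{3}$, $k\geq 0$) subject to $|\a|+2k\leq N$: then the claim is exactly that $u_{n,p}$ is of weight $\leq n-2$, the list in (i) is precisely the set of functions of weight $\leq n-2$, and the list in (ii) is the set of functions of weight $\leq n-2$ that contain no corrector of index above $n-3$. Recall $g_{n}=0$ for odd $n$ by Proposition \ref{gn=0-n-odd}, so only even-index correctors occur. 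The base cases $n\leq 4$ are the explicit expressions from Sections \ref{sec-wkb-e--2}--\ref{sec-wkb-e-2}, after using $g_{1}=0$ and rewriting the one time derivative $\d_{t}(g_{0}^{3})$ appearing in \eqref{wkb-23-2} through the cubic Schr\"odinger equation \eqref{wkb-U01-4}; for $n\in\{0,1\}$ and $p\geq 3$ the amplitudes vanish.

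For the inductive step, fix an even $n$ and use the recursion \eqref{Unp-even-1},
$$
u_{n,p}=\frac{1}{1-p^{2}}\Big(-f(u_{a})_{n-2,p}-\d_{t}^{2}u_{n-4,p}+\Delta u_{n-2,p}-2ip\,\d_{t}u_{n-2,p}\Big),
$$
handling the four summands in turn. By \eqref{Fnp}, $f(u_{a})_{n-2,p}$ is a finite sum of products $u_{n_{1},p_{1}}u_{n_{2},p_{2}}u_{n_{3},p_{3}}$ with $n_{1}+n_{2}+n_{3}=n-2$; by Proposition \ref{gn=0-n-odd} each factor vanishes when $n_{i}$ is odd, equals $g_{n_{i}}$ or $\bar g_{n_{i}}$ (weight $n_{i}$) when $|p_{i}|=1$, and, when $n_{i}$ is even and $|p_{i}|\geq 3$, is of weight $\leq n_{i}-2$ by the induction hypothesis; hence the product is of weight $\leq n_{1}+n_{2}+n_{3}=n-2$. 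The term $\Delta u_{n-2,p}$ has weight $\leq (n-4)+2=n-2$ by the induction hypothesis and the Leibniz rule. The two remaining terms involve time derivatives; here the key observation is that by the induction hypothesis $u_{n-2,p}$ and $u_{n-4,p}$ (for $p\geq 3$) involve only correctors $g_{2k}$ of index $2k\leq n-4$ and $2k\leq n-6$ respectively, i.e. strictly below the current level, so converting the $\d_{t}$'s introduces no circular dependence.

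The conversion itself relies on the auxiliary claim that, for every even $m$ and every $j\geq 0$, $\d_{t}^{j}g_{m}$ has weight $\leq m+2j$ and involves no corrector of index above $m$. I would prove this by induction on $m$ with an inner induction on $j$, using the Schr\"odinger hierarchy \eqref{Schro-gn}, $2i\,\d_{t}g_{m}=-\d_{tt}g_{m-2}+\Delta g_{m}-f(u_{a})_{m,1}$ (with $g_{-2}=0$): $\d_{t}^{j-1}\d_{tt}g_{m-2}=\d_{t}^{j+1}g_{m-2}$ is controlled at the lower index $m-2$ with weight $\leq(m-2)+2(j+1)=m+2j$; $\d_{t}^{j-1}\Delta g_{m}=\Delta\,\d_{t}^{j-1}g_{m}$ is controlled at the same $m$ with smaller $j$, the Laplacian raising the weight by $2$; and $\d_{t}^{j-1}f(u_{a})_{m,1}$, expanded by Leibniz into products of factors $\d_{t}^{j_{i}}u_{n_{i},p_{i}}$ with $\sum j_{i}=j-1$, is treated factor-by-factor, using Proposition \ref{gn=0-n-odd} together with the main induction (for the factors with $n_{i}<m$) and the present claim with exponents $j_{i}<j$ (for the factor $u_{m,1}=g_{m}$ or $u_{m,p}$, $p\geq 3$, that can occur). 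Plugging this back, one $\d_{t}$ raises the weight of $u_{n-2,p}$ from $\leq n-4$ to $\leq n-2$, two $\d_{t}$'s raise that of $u_{n-4,p}$ from $\leq n-6$ to $\leq n-2$, and the inductive step for $u_{n,p}$ is complete.

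Assertions (i) and (ii) are then immediate. For even $n$ and $p\geq 3$, \eqref{Unp-even-1} writes $U_{n,p}$ in terms of $u_{n,p}$, $u_{n-2,p}$ and $\d_{t}u_{n-2,p}$, all of weight $\leq n-2$; for odd $n$ and $p\geq 3$, \eqref{Unp-odd-1} gives $U_{n,p}=(\nabla u_{n-1,p},0,0)^{\rm T}$ with $n-1$ even, and a single gradient sends something of weight $\leq n-3$ with top corrector index $n-3$ to weight $\leq n-2$, still with top corrector index $n-3$ --- exactly the list in (ii). The main obstacle, and the only nonroutine ingredient, is the bookkeeping in the auxiliary induction: checking that each substitution of a time derivative via the Schr\"odinger hierarchy respects both the spatial-derivative budget and the corrector-index budget, and that the dependencies are genuinely acyclic. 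Everything else is a straightforward unwinding of Propositions \ref{prop-Un1} and \ref{gn=0-n-odd}.
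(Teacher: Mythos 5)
Your proof is correct. The paper omits this argument entirely, stating only that the corollary is ``rather straightforward by induction argument and Propositions \ref{prop-Un1} and \ref{gn=0-n-odd}''; your write-up supplies the missing content, and the route you take---strong induction on $n$ driven by the recursion \eqref{Unp-even-1}, together with the auxiliary claim that $\d_t^j g_m$ is a weight-$(m+2j)$ polynomial in spatial derivatives of the correctors of index $\le m$, obtained by iterating the Schr\"odinger hierarchy \eqref{Schro-gn}---is precisely what makes the paper's ``straightforward induction'' rigorous. Your attention to the sequencing of the two inductions (main claim at level $n$ uses the auxiliary claim only for corrector index $\le n-4$; the auxiliary claim at index $m$ uses the main claim only for $n\le m$) is the correct way to dispose of the apparent circularity, and assertions (i) and (ii) do follow by the one-line reading of \eqref{Unp-even-1} and \eqref{Unp-odd-1} as you indicate.
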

 
The proof of  Corollary \ref{prop-Unp-gn} is rather straightforward by induction argument and Propositions \ref{prop-Un1} and \ref{gn=0-n-odd}, which is omitted here for brevity.

\subsection{Initial data of $U_{n,p}$}

We have imposed initial datam $g_{0}(0,\cdot) =\frac{\phi - i \psi}{2}$ in \eqref{wkb-ini-g0} and $g_{1}(0,\cdot) = 0$ such that
$$
 \big(U_{0} + \e U_{1}\big)(0,\cdot) = (\e \nabla^{\rm T} \phi, \psi, \phi) = U(0,\cdot),
$$
where $U(0,\cdot)$ is the initial datum of the exact solution $U$, see \eqref{ini-data00}.

To ensure the initial difference $U(0,\cdot) - U_{a}(0,\cdot)$ to be small, we have chosen $g_{2}(0,\cdot)$ as in \eqref{g20-2} such that $U_{2}(0,\cdot) = 0$. Moreover, with the choice of initial datum $g_{2}(0,\cdot)$ in \eqref{g20-2} and the choice  $g_{3}(0,\cdot) = 0$, we can deduce from \eqref{U3-form} that
$
U_{3}(0,\cdot) = 0.
$
By induction, we can show that with proper choice of initial data for $g_{n}$, we can ensure $U_{n}(0,\cdot) = 0$ for all $n=0,1, \cdots, K_{a}, K_{a}+1$:
\begin{proposition}\label{ini-Un=0} Let $g_{n}(0,\cdot) = 0$ for all $n\in \{1,2,\cdots, K_{a}\}\cap \Z_{\rm odd}$. For each $n=\{2,\cdots, K_{a}\}\cap \Z_{\rm even}$, there exists a polynomial $\calP_{n}(\d_x^{n} \phi, \d_x^{n} \psi)$ such that by imposing initial data
\ba\label{ini-gn}
 g_{n}(0,\cdot) = \calP_{n}(\d_x^{n} \phi, \d_x^{n} \psi),\nn
\ea
there holds
\ba\label{Un=0}
U_{n}(0,\cdot) = 0 \quad \mbox{for all $n\in \{2,3,\cdots, K_{a}, K_{a}+1\}$}.
\ea

\end{proposition}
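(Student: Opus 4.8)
The plan is to argue by induction on the even index $n$, running from $2$ up to $K_a$, the base case $n=2$ being precisely the choice \eqref{g20-2} of $g_2(0,\cdot)$ together with the verification that then $U_3(0,\cdot)=0$, both recorded just above. Throughout I would recall that $g_k(0,\cdot)=0$ is imposed for every odd $k\le K_a$ and, consistently with the form \eqref{ua-form-thm4} of $u_a$, that the free top amplitude $g_{K_a+1}$ is taken identically zero. For the inductive step, fix an even $n\le K_a$ and assume that for every even $k<n$ the initial datum $g_k(0,\cdot)=\calP_k(\d_x^k\phi,\d_x^k\psi)$ has already been selected so that $U_k(0,\cdot)=U_{k+1}(0,\cdot)=0$.

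The first step is to isolate in $U_n(0,\cdot)$ the part that still depends on the undetermined $g_n(0,\cdot)$. By Proposition \ref{gn=0-n-odd}(ii) (using $g_{n-1}=0$) and the reality relation $U_{n,-p}=\overline{U_{n,p}}$, at $t=0$ one has
\[
U_n(0,\cdot)=2\,\mathrm{Re}\Big[\,g_n(0,\cdot)\,e_+\;+\;\bp 0_d\\ \d_t g_{n-2}(0,\cdot)\\ 0\ep\;+\;\sum_{p=3,5,\dots}^{n+1}U_{n,p}(0,\cdot)\Big].
\]
Now by Corollary \ref{prop-Unp-gn}(i) each $U_{n,p}$ with $p\ge3$ is a fixed polynomial in the spatial derivatives $\d_x^{n-2-k}(g_k,\bar g_k)$, $k=0,2,\dots,n-2$, and, using the Schr\"odinger equations \eqref{Schro-gn} to trade every time derivative for spatial ones, $\d_t g_{n-2}$ is likewise a fixed polynomial in spatial derivatives of $g_0,g_2,\dots,g_{n-2}$; crucially, none of these expressions contains $g_n$. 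Evaluating at $t=0$ and inserting the induction hypothesis $g_k(0,\cdot)=\calP_k(\d_x^k\phi,\d_x^k\psi)$ for even $k\le n-2$ (and $g_k(0,\cdot)=0$ for odd $k$), every term in the bracket other than $g_n(0,\cdot)\,e_+$ becomes an explicit polynomial in $\d_x^n\phi,\d_x^n\psi$ (the highest, order-$n$, derivatives entering only through $\Delta g_{n-2}(0,\cdot)$), and its first vector slot vanishes because for even $n$ every $U_{n,p}$, $p\ge3$, has zero first slot by \eqref{Unp-even-1}. Since $e_+=(0_d^{\rm T},i,1)^{\rm T}$, the three slots of $U_n(0,\cdot)=0$ then read: $0=0$; a real linear equation fixing $\mathrm{Im}\,g_n(0,\cdot)$; and a real linear equation fixing $\mathrm{Re}\,g_n(0,\cdot)$. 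Solving these defines $\calP_n(\d_x^n\phi,\d_x^n\psi):=g_n(0,\cdot)$, the asserted polynomial.

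It remains to verify that this same choice forces $U_{n+1}(0,\cdot)=0$. Since $n+1$ is odd and $g_{n+1}\equiv0$, Propositions \ref{prop-Un1} and \ref{gn=0-n-odd} (parts (iii)--(iv), in particular $u_{n,1}=g_n$) give $U_{n+1}=\sum_{p}e^{ip\th}\,(\nabla u_{n,p},0,0)^{\rm T}$, the sum running over the harmonics $|p|\le n+1$, $p$ odd, which are the same as those of $U_n$; hence the second and third components of $U_{n+1}(0,\cdot)$ vanish identically, while the first equals $2\nabla\,\mathrm{Re}\sum_{p=1,3,\dots}^{n+1}u_{n,p}(0,\cdot)$. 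But the third component of the identity $U_n(0,\cdot)=0$ just obtained is exactly $2\,\mathrm{Re}\sum_{p=1,3,\dots}^{n+1}u_{n,p}(0,\cdot)=0$, so $U_{n+1}(0,\cdot)=0$ as well. This closes the induction and yields $U_n(0,\cdot)=0$ for all $n\in\{2,3,\dots,K_a,K_a+1\}$.

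The only genuine content sits in the second paragraph: one must be sure that, after all time derivatives have been removed via \eqref{Schro-gn}, the quantities $\d_t g_{n-2}(0,\cdot)$ and $U_{n,p}(0,\cdot)$ with $p\ge3$ are truly polynomial in $\d_x^n\phi,\d_x^n\psi$ and — this is the point that makes the induction non-circular — do not secretly involve the still-unknown $g_n(0,\cdot)$. Both facts are already contained in Propositions \ref{prop-Un1}, \ref{gn=0-n-odd} and Corollary \ref{prop-Unp-gn}, so beyond invoking them the argument is routine bookkeeping, including the derivative-order count (each stage contributes exactly two more spatial derivatives, matching the jump from $\calP_{n-2}$ to $\calP_n$).
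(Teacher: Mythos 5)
Your proof is correct and follows essentially the same route as the paper's: an induction in steps of two where, at each stage, the second and third slots of $U_n(0,\cdot)=0$ determine $\mathrm{Im}\,g_n(0,\cdot)$ and $\mathrm{Re}\,g_n(0,\cdot)$ as polynomials in the data (the non-circularity coming from Corollary \ref{prop-Unp-gn} and the Schr\"odinger equations), and the vanishing of the third slot, read through $u_{n,1}=g_n$ and the odd-$n$ structure $U_{n+1,p}=(\nabla u_{n,p},0,0)^{\rm T}$, then forces $U_{n+1}(0,\cdot)=0$ for free. The only cosmetic difference is that you index the induction by the even $n$ rather than by the odd $k=n-1$, and you make the automatic vanishing of the first slot for even $n$ explicit, which the paper leaves implicit.
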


\begin{proof} We know from Proposition \ref{gn=0-n-odd}  that $g_{n} = 0$ for all $n$ odd.   We have shown the results \eqref{Un=0} for $n=1,2,3$.  In particular $\calP_{1} = g_{0}(0,\cdot)$ and $\calP_{2} = g_{2}(0,\cdot)$ are given in \eqref{wkb-ini-g0} and \eqref{g20-2}, respectively. Let integer $2\leq k \leq K_{a}$ be odd. By induction we  assume that the results \eqref{Un=0}   hold for all $n\leq k$. We shall show that there exists some polynomial $\calP(\d_x^{k+1} \phi, \d_x^{k+1}\psi)$  such that by imposing initial data
$
g_{k+1}(0,\cdot) =\calP (\d_x^{k+1} \phi, \d_x^{k+1} \psi),
$
there holds
$$
U_{k+1}(0,\cdot) = U_{k+2}(0,\cdot) = 0.
$$

By Assertion (ii) in Proposition \ref{gn=0-n-odd},  we know that $U_{k+1}$ is of the form
\ba\label{Un=0-1}
U_{k+1}  = e^{i\th} \left[g_{k+1} e_{+} + \bp 0_d \\ \d_{t} g_{k-1} \\ 0 \ep  \right] +  \sum_{p=3}^{k+2}e^{ip\th}  \bp 0_d \\  \d_{t} u_{k-1,p} + i p u_{k+1,p} \\ u_{k+1,p} \ep + c.c.\,.\nn
\ea
To ensure $U_{k+1}(0,\cdot) = 0$, it suffices to impose
\be\label{gk+1-0-1}
\left\{
\begin{aligned} & i g_{k+1}(0,\cdot) + \d_{t} g_{k-1}(0,\cdot)  +  \sum_{p=3}^{k+2} (\d_{t} u_{k-1,p} + i p u_{k+1,p}) (0,\cdot)+ c.c. = 0, \\
& g_{k+1}(0,\cdot)  +  \sum_{p=3}^{k+2} u_{k+1,p} (0,\cdot)+ c.c. = 0,
\end{aligned}
\right.
\nn
\ee
which is equivalent to
\ba\label{gk+1-0-2}
{Im}(g_{k+1}(0,\cdot)) & = Re\big[ \d_{t} g_{k-1}(0,\cdot)  +   \sum_{p=3}^{k+2} (\d_{t} u_{k-1,p} + i p u_{k+1,p}) (0,\cdot) \big] ,\\
{Re}(g_{k+1}(0,\cdot)) & = - Re \sum_{p=3}^{k+2} u_{k+1,p} (0,\cdot) .
\ea
By Proposition \ref{gn=0-n-odd}, Corollary \ref{prop-Unp-gn}, and the induction assumption, together with the  Schr\"odinger equations for $g_{n}, n\leq k-1$,  we can deduce from \eqref{gk+1-0-2} that the initial data $g_{k+1}(0,\cdot)$ can be written as a polynomial in  $(\d_x^{k+1} \phi, \d_x^{k+1} \psi)$.  

Next, we show that with such a choice of initial datum for $g_{k+1}(0,\cdot) $ in \eqref{gk+1-0-2}, there holds automatically $U_{k+2}(0,\cdot) = 0$. Indeed, by Proposition \ref{gn=0-n-odd}, we have
\ba\label{Lp-k+2-p-3}
U_{k+2}  =   e^{i\th}  \bp \nabla g_{k+1} \\ 0 \\ 0 \ep    +  \sum_{p=3}^{n}e^{ip\th} \bp \nabla u_{k+1,p} \\ 0 \\ 0 \ep +  c.c.,\nn
\ea
which, together with the second equation in $\eqref{gk+1-0-2}$,  ensures  $U_{k+2}(0,\cdot) =0$.

\end{proof}

\section{Regularity of WKB solutions} \label{sec:reg-wkb}
In this section, we shall show the regularity of the component $U_{n,p}$ of the WKB solutions and further prove  Theorems \ref{thm:app-defo} and \ref{thm:app-defo}.

\subsection{Regularity of WKB solutions: focusing case}

Now we can summarize the results obtained in Section \ref{sec-wkb-e-n} to give the regularity and precision of the WKB solutions.  As shown in Corollary \ref{prop-Unp-gn}, each component $U_{n,p}$ can be written as a polynomial of  $g_{0}, g_{2}, \cdots, g_{n}$ and their derivatives up to order $n-2$. Hence the regularities of $U_{n,p}$ are determined by the regularities of $g_{0}, g_{2}, \cdots, g_{n}$.  We first consider the focusing case with $\l<0$.

\begin{proposition}\label{prop-reg-gn}
Let $\l<0$. Let  $K_{a} \in \Z_{\geq 0}\cap \Z_{\rm even}$. Suppose $(\phi, \psi) \in H^{s}$ with $s>s_{a}:= 2K_{a} + 4 + \frac{3}{2}$. Let $T^{*}$ be the existence time to the Cauchy problem of the cubic Schr\"odinger equation \eqref{eq-g-cub} given in Proposition \ref{prop-Sch-local}.  Let $g_{n} \equiv 0$ for each $n\in \{0,1,2,\cdots, K_{a}\}\cap \Z_{\rm odd}$. For each $n\in \{2,\cdots, K_{a}\}\cap \Z_{\rm even}$, let $g_{n}$ be the solution to the linear Schr\"odinger equation \eqref{Schro-gn} with initial datum $g_{n}(0,\cdot)$ chosen as in Proposition \ref{ini-Un=0}. Then
\ba\label{reg-gn-0}
g_{n}\in C([0,T^{*}); H^{s-2n}(\R^{3})), \quad \mbox{for each $n\in \{2,\cdots, K_{a}\}\cap \Z_{\rm even}$}.\nn
\ea
Furthermore,  by choosing $g_{K_{a}+1} = g_{K_{a}+2} = 0$, there exists a WKB solution $U_{a}$ of the form \eqref{def-wkb} with $U_{n,p}$ satisfying the properties stated in Propositions \ref{prop-Un1}, \ref{prop-Unp-p-large}, \ref{gn=0-n-odd}, \ref{prop-Unp-gn}, and \ref{ini-Un=0}. In particular, the amplitudes $U_{n,p}$ satisfy the estimates:
\ba\label{reg-un}
& U_{n,1} \in C([0,T^{*}); H^{s-2n}(\R^{3})), \quad \mbox{for each $n=0,1,2, \cdots, K_{a}$,}\\
& U_{K_{a}+1,1} \in C([0,T^{*}); H^{s-2K_{a} -1}(\R^{3})), \quad  U_{K_{a}+2,1} \in C([0,T^{*}); H^{s-2K_{a} -2}(\R^{3})),\\
& U_{n,p} \in C([0,T^{*}); H^{s-2n+2}(\R^{3})), \quad \mbox{for each $n=0,1,2, \cdots, K_{a}+2$ and each $|p|\geq 3$,}
\ea
and the initial difference satisfies
\ba\label{ini-ua-u}
\|U(0,\cdot) - U_{a}(0, \cdot)\|_{H^{s-2K_{a}-2}} \leq C_{0} \e^{K_{a}+2}.
\ea

\end{proposition}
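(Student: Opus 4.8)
The plan is to prove Proposition~\ref{prop-reg-gn} by an induction on the even index $n$: first one constructs the scalar profiles $g_n$ with the claimed regularity, and then the regularity of the full amplitudes $U_{n,p}$ is read off from the explicit cascade formulas of Propositions~\ref{prop-Un1} and \ref{gn=0-n-odd} and Corollary~\ref{prop-Unp-gn}. The base of the induction is the leading profile $g_0$, which solves the cubic Schr\"odinger equation \eqref{eq-g-cub}; since $s > s_a > \tfrac32$, Proposition~\ref{prop-Sch-local} yields $g_0 \in C([0,T^*); H^s(\R^3))$. The odd profiles are set to zero by hypothesis, so only $g_2, g_4, \dots, g_{K_a}$ have to be produced.

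For the inductive step I would assume $g_m \in C([0,T^*); H^{s-2m}(\R^3))$ for every even $m < n$. Differentiating the Schr\"odinger equation \eqref{Schro-gn} for $g_m$ in time and using the equation itself to trade each time derivative for a loss of two spatial derivatives gives $\d_t^k g_m \in C([0,T^*); H^{s-2m-2k}(\R^3))$ for all $k \ge 0$; in particular $\d_{tt} g_{n-2} \in C([0,T^*); H^{s-2n}(\R^3))$. In equation \eqref{Schro-gn} for $g_n$ I would then split the forcing $f(u_a)_{n,1}$ into the part linear in $g_n$ and the rest: by the harmonic bookkeeping of Proposition~\ref{prop-Unp-p-large}, the vanishing of the odd profiles, and the fact that $u_{n,p}$ for $|p|\ge3$ does not involve $g_n$ (Corollary~\ref{prop-Unp-gn}), the $g_n$-linear part is exactly $3\l\big(2|g_0|^2 g_n + g_0^2 \bar g_n\big)$, while the remaining part of $-\d_{tt}g_{n-2} - f(u_a)_{n,1}$ depends only on $g_0, g_2, \dots, g_{n-2}$ and their space-time derivatives, which reduce via the Schr\"odinger equations to spatial derivatives. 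Since $s - 2n \ge s - 2K_a > 4 + \tfrac32 > \tfrac32$, $H^{s-2n}(\R^3)$ is a Sobolev algebra and every factor in the remainder lies in it, so the remainder lies in $C([0,T^*); H^{s-2n}(\R^3))$. Treating $3\l(2|g_0|^2 g_n + g_0^2 \bar g_n)$ as a time-dependent potential with $C([0,T^*); H^{s-2n})$ coefficients, the linear Cauchy problem \eqref{Schro-gn} with the initial datum fixed in Proposition~\ref{ini-Un=0} has a unique solution in $C([0,T^*); H^{s-2n}(\R^3))$: the a priori bound follows from a Gronwall estimate on $\tfrac{d}{dt}\|g_n\|_{H^{s-2n}}^2$ (equivalently, a Duhamel fixed point using the free Schr\"odinger group on $H^{s-2n}$), and continuity in time is standard. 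This closes the induction; I then set $g_{K_a+1} = g_{K_a+2} = 0$.

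With all profiles at hand I would define $U_a$ via \eqref{def-wkb}, using Proposition~\ref{prop-Un1} for $U_{n,\pm1}$ and Proposition~\ref{gn=0-n-odd} for $U_{n,p}$ with $|p|\ge3$; these automatically satisfy the structural properties quoted in the statement. The regularity estimates \eqref{reg-un} then follow by inspection: for $n$ even, $U_{n,1}=g_n e_+ + (\nabla g_{n-1},\d_t g_{n-2},0)^{\rm T}$ with $g_{n-1}=0$ and $\d_t g_{n-2}\in H^{s-2n+2}\subset H^{s-2n}$; for $n$ odd, $g_n=g_{n-2}=0$ so $U_{n,1}=(\nabla g_{n-1},0,0)^{\rm T}$ with $\nabla g_{n-1}\in H^{s-2n+1}$; in particular $U_{K_a+1,1}=(\nabla g_{K_a},0,0)^{\rm T}\in H^{s-2K_a-1}$ and $U_{K_a+2,1}=(0,\d_t g_{K_a},0)^{\rm T}\in H^{s-2K_a-2}$. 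For $|p|\ge3$, Corollary~\ref{prop-Unp-gn} expresses $U_{n,p}$ as a polynomial in spatial derivatives of $g_0,g_2,\dots,g_{n-2}$ whose least regular factor is $g_{n-2}\in H^{s-2(n-2)}=H^{s-2n+4}\subset H^{s-2n+2}$, and $H^{s-2n+4}$ is again an algebra since $s-2n+4\ge s-2K_a>\tfrac32$, so $U_{n,p}\in C([0,T^*); H^{s-2n+2}(\R^3))$; continuity in time is inherited from the profiles. Finally, the choices $g_0(0,\cdot)=(\phi-i\psi)/2$ and $g_1(0,\cdot)=0$ give $(U_0+\e U_1)(0,\cdot)=U(0,\cdot)$ (cf.~\eqref{ini-data00}), and Proposition~\ref{ini-Un=0} gives $U_n(0,\cdot)=0$ for $2\le n\le K_a+1$, so that $U(0,\cdot)-U_a(0,\cdot)=-\e^{K_a+2}U_{K_a+2}(0,\cdot)$; evaluating the formulas above at $t=0$ and using that each $g_n(0,\cdot)$ is a polynomial in $\d_x^n\phi,\d_x^n\psi$ shows $U_{K_a+2}(0,\cdot)$ is a polynomial in finitely many spatial derivatives of $(\phi,\psi)$, hence bounded in $H^{s-2K_a-2}(\R^3)$ by a constant $C_0$ depending only on $\|(\phi,\psi)\|_{H^s}$, which is \eqref{ini-ua-u}.

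The step I expect to be the main obstacle is the inductive construction of $g_n$, specifically the verification that $f(u_a)_{n,1}$ separates cleanly into the $g_n$-linear term $3\l(2|g_0|^2 g_n + g_0^2\bar g_n)$ and a remainder that involves only $g_0,\dots,g_{n-2}$ at the regularity level $H^{s-2n}$, together with the bookkeeping that each time derivative $\d_t^k g_{n-2}$ costs exactly $2k$ spatial derivatives so that $\d_{tt}g_{n-2}$ fits the budget. Both facts are essentially already encoded in the WKB cascade of Propositions~\ref{prop-Un1}, \ref{gn=0-n-odd} and Corollary~\ref{prop-Unp-gn}; once they are invoked, the remaining work --- the Sobolev-algebra product estimates, the Gronwall/Duhamel argument for the linear Schr\"odinger equation on $[0,T^*)$, and the constant tracking for the $D(t)$ and $C_0$ bounds --- is routine.
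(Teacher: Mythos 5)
Your proof follows essentially the same route as the paper's: establish $g_0\in C([0,T^*);H^s)$ from Proposition \ref{prop-Sch-local}, build the even profiles $g_n$ inductively via Duhamel/Gronwall after isolating the $g_n$-linear part $3\lambda(g_0^2\bar g_n+2|g_0|^2 g_n)$ of $f(u_a)_{n,1}$ and controlling the remainder and $\partial_{tt}g_{n-2}$ in the Sobolev algebra $H^{s-2n}$, then read off the regularity of $U_{n,p}$ from the explicit cascade formulas and deduce \eqref{ini-ua-u} from $U_a(0,\cdot)-U(0,\cdot)=-\varepsilon^{K_a+2}U_{K_a+2}(0,\cdot)$. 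This matches the paper's argument (it carries out the case $n=2$ first and then invokes the same induction), so your proposal is correct and differs only in presentational detail.
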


\begin{proof}[Proof of Proposition \ref{prop-reg-gn}]

First of all, as shown in Proposition \ref{prop-Sch-local},  there admits a unique solution $g_{0} \in C([0,T^{*}), H^{s}(\R^{3}))$ such that
\ba\label{g0-bdd-1}
\|g_{0}\|_{L^{\infty}(0,t; H^{s})} \leq D(t), \quad \forall \, t\in [0,T^{*}).\nn
\ea

\medskip

By Proposition \ref{ini-Un=0} and the fact that $H^{s}(\R^{3})$ is a Banach algebra when $s>d/2$,  one has
\ba\label{ini-gn-reg}
\|g_{n}(0,\cdot) \|_{H^{s-n}}\leq C_0, \ \mbox{for each even $n\in \{2,\cdots, K_{a}\}\cap \Z_{\rm even}$}. \nn
\ea
In particular, $g_{2}$ satisfies the linear Schr\"odinger equation \eqref{wkb-21-2} with initial datum \eqref{g20-2}. Using Duhamel formula gives for each $t\in (0,T^{*})$ that
\ba\label{g2-duha-1}
g_{2}(t,\cdot)  =  e^{-\frac{it\Delta }{2}} g_{2}(0,\cdot) + \int_{0}^{t}   e^{-\frac{i(t-s)\Delta }{2} } \frac{i}{2} \big( \d_{tt} g_{0} + f(u_a)_{2,1}\big)(t', \cdot)  \dd t' .
\ea
Using the equation \eqref{wkb-U01-4} in $g_{0}$ implies that $\d_{tt} g_{0} \in C([0,T^{*}), H^{s-4}(\R^{3}))$. Then by the form of $f(u_a)_{2,1}$ in \eqref{wkb-21-0} we deduce that
\ba\label{g2-duha-2}
\| g_{2}(t,\cdot)\|_{H^{s-4}}  & \leq   \| g_{2}(0,\cdot)\|_{H^{s-4}}  + \int_{0}^{t} \frac{1}{2}  \big(\|\d_{tt} g_{0}(t', \cdot)\|_{H^{s-4}}  + \|f(u_a)_{2,1}(t', \cdot)\|   \big) \dd t' \\
& \leq C_{0}+ \int_{0}^{t} D(t') \big(1  + \| g_{2}(t', \cdot)\|_{H^{s-4}}\big)\dd t' .
\ea
Applying Gronwall's inequality to \eqref{g2-duha-2} implies that
\ba\label{g2-bdd-1}
\|g_{2}\|_{L^{\infty}(0,t; H^{s-4})} \leq D(t), \quad \forall \, t\in [0,T^{*}).
\ea
Then $g_{2}\in C([0,T^{*}); H^{s-4}(\R^{3}))$ where the continuity in time variable follows from the Duhamel formula \eqref{g2-duha-1}. Note that the generic continuous increasing function $D(t)$ in \eqref{g2-bdd-1} may differ from the one in \eqref{g2-duha-2}.

\medskip

Recall the Schr\"odinger equation \eqref{Schro-gn} for $g_{n}$ with $n$ even:
 \be\label{Schro-gn-recall}
 2 i \d_t   g_{n} +  \d_{tt} g_{n-2} - \Delta  g_{n}  + f(u_{a})_{n,1} = 0.\nn
\ee
By \eqref{Fnp} concerning the form of $f(u_{a})_{n,p}$, one has
\ba \label{f-ua-n1}
 f(u_{a})_{n,1} = \l (3 u_{0,1}^{2} u_{n,-1} + 6u_{0,1}u_{0,-1}  u_{n,1}) + f(u_{a})_{n,1}^{(r)}\nn
 \ea
with
 \ba \label{f-ua-n1-2}
 f(u_{a})_{n,1}^{(r)} :=  \l \sum_{n_{1}+ n_{2} + n_{3} = n; n_{1}, n_{2}, n_{3} \leq n-1}\; \sum_{p_{1}+p_{2} + p_{3} = 1} u_{n_{1},p_{1}} u_{n_{2},p_{2}} u_{n_{3},p_{3}}.\nn
  \ea
By Proposition \ref{gn=0-n-odd} we know that $u_{n,1} = g_{n}$ for each $n$. Thus,
\ba \label{f-ua-n1-3}
 f(u_{a})_{n,1} =  \l (3 g_{0}^{2} \bar g_{n} + 6 |g_{0}|^{2} g_{n}) +\l \sum_{n_{1}+ n_{2} + n_{3} = n; n_{1}, n_{2}, n_{3} \leq n-1}\; \sum_{p_{1}+p_{2} + p_{3} = 1} u_{n_{1},p_{1}} u_{n_{2},p_{2}} u_{n_{3},p_{3}}.
 \ea
  By Proposition  \ref{prop-Unp-gn}, we know for each $k\leq n-1$ and each $p\in \calH_{k},  \ p\geq 3$, $u_{k,p}$ is a polynomial in $g_{0}, g_{2}, \cdots, g_{k-2}$.  Thus, similarly as \eqref{g2-duha-1}--\eqref{g2-bdd-1}, by induction argument we have $g_{n} \in C([0,T^{*}); H^{s-2n}(\R^{3}))$,  for each $n\in \{2, \cdots, K_{a}\}\cap \Z_{\rm even}. $

 \medskip

 By choosing $g_{_{K_{a}+1}} = g_{_{K_{a}+2}} = 0$ and applying Propositions \ref{prop-Un1}, \ref{prop-Unp-p-large}, \ref{gn=0-n-odd}, \ref{prop-Unp-gn},  \ref{ini-Un=0} and \ref{prop-reg-gn},  it is rather straightforward to construct a WKB solution $U_{a}$ of the form \eqref{def-wkb} satisfying \eqref{reg-un}. By Proposition \ref{ini-Un=0}, the initial perturbation is
 \ba\label{ini-pert-defo}
U_{a}(0,\cdot) - U(0,\cdot) =  \e^{K_{a}+2} U_{K_{a}+2}(0,\cdot)
 \ea
which satisfies \eqref{ini-ua-u}.
\end{proof}

\subsection{Regularity of WKB solutions: defocusing case}\label{sec:WKB-est-defo}

For the defocusing case with $\l>0$, a global-in-time WKB solution can be constructed. Moreover, we can derive specific growth rates in time for all $U_{n,p}$ which are important to obtain the linear-in-time growth of error estimates of the form $(1+t) \e^{2}$ in Section \ref{sec:sta-WKB}.

\begin{proposition}\label{prop-reg-gn-defo}
Let $\l>0$. Let $K_{a} \in \Z_{\geq 0}\cap \Z_{\rm even}$. Suppose $(\phi, \psi)$ satisfy \eqref{ini-ass1}--\eqref{ini-ass2} with $s> s_{a}:= 2  K_{a} +4 + \frac{3}{2}$.  Let $g_{n} \equiv 0$ for each $n\in \{0,1,2,\cdots, K_{a}\}\cap \Z_{\rm odd}$ and $g_{_{K_{a}+1}}= g_{_{K_{a}+2}} =0$. For each $n\in \{2,3,\cdots, K_{a}\}\cap \Z_{\rm even}$, let $g_{n}$ be the solution to the linear Schr\"odinger equation \eqref{Schro-gn} with initial datum $g_{n}(0,\cdot)$ chosen as in Proposition \ref{ini-Un=0}. Then
\ba\label{reg-gn-0-defo}
\d_{t}^{j} g_{n}\in C([0,\infty); H^{s-2n-2j}(\R^{3})), \quad \mbox{for each $n\in \{2,3,\cdots, K_{a}\}\cap \Z_{\rm even}, \ j\leq \frac{K_{a}}{2} - n$}.
\ea
Moreover, $g_{0}$ satisfies the properties given in Proposition \ref{prop-Sch-global}, and $g_{n},  \  n\in \{0, 1,\cdots,K_{a}\}\cap \Z_{\rm even}$ satisfy the following estimates for all $t>0$:
\ba\label{est-g2-1}
 \| \d_{t}^{j}g_{n}(t, \cdot)\|_{H^{s-2n - 2j }(\R^{3})} \leq C_{0} + C_{0}(1 + t)^{n-1}  , \  \mbox{for all $j\leq \frac{K_{a}}{2} - n.$}
 \ea
In addition, for each $ n\in \{ 2,3, \cdots, K_{a}+2\}\cap \Z_{\rm even},  \ p\geq 3$, there holds
\ba\label{reg-unp-defo}
\| \d_{t}^{j} u_{n,p}(t,\cdot)\|_{H^{s-2n+4 -2j }}  \leq   C_{0} + C_{0}(1 + t)^{n-3},  \ \mbox{for all $j\leq \frac{K_{a}}{2} - n + 2.$}
 \ea

\end{proposition}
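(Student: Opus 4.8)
The plan is to argue by induction on the even index $n$, establishing simultaneously (for $n=0,2,\dots,K_a$, and then $n=K_a+2$ for the harmonics) that $g_n$ and the time derivatives $\partial_t^j g_n$ needed in the cascade lie in the asserted Sobolev spaces, that $g_n$ obeys \eqref{est-g2-1}, and that $u_{n,p}$ with $p\ge3$ obeys \eqref{reg-unp-defo}. The base case $n=0$ is exactly Proposition \ref{prop-Sch-global}: the defocusing cubic NLS \eqref{wkb-U01-4} is globally well-posed with $\|g_0(t)\|_{H^s}\le C_0$ and $\|g_0(t)\|_{L^\infty}\le C_0(1+t)^{-3/2}$; substituting the equation repeatedly writes $\partial_t^j g_0$ as a polynomial in $g_0,\bar g_0$ and their spatial derivatives of order $\le 2j$, so the bounds on $\partial_t^j g_0$ in $H^{s-2j}$ follow from the fact that $H^{s'}$ is a Banach algebra for $s'>\tfrac32$ (which holds for every Sobolev index occurring below because $s>2K_a+4+\tfrac32$).

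For the inductive step I assume the assertions for all even $k<n$. As in the proof of Proposition \ref{prop-reg-gn}, Proposition \ref{gn=0-n-odd} (so $g_k\equiv0$ for odd $k$, $u_{k,1}=g_k$) and Corollary \ref{prop-Unp-gn} recast \eqref{Schro-gn} as
\[
2i\partial_t g_n-\Delta g_n+3\lambda\big(g_0^2\,\bar g_n+2|g_0|^2g_n\big)=-\,\partial_{tt}g_{n-2}-f(u_a)_{n,1}^{(r)},
\]
where $f(u_a)_{n,1}^{(r)}$ is the part of $f(u_a)_{n,1}$ built only from amplitudes $u_{k,p}$ with $k\le n-1$, hence a universal polynomial in $g_0,g_2,\dots,g_{n-2}$ and their spatial derivatives of order $\le n-2$. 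Then I would run an energy/Duhamel estimate in $H^{s-2n}$: the initial datum contributes $\|g_n(0,\cdot)\|_{H^{s-2n}}\le C_0$ by Proposition \ref{ini-Un=0}; the linear coupling gives a Gr\"onwall kernel $C\|g_0(t')\|_{L^\infty}^2\le C_0(1+t')^{-3}$, which is integrable on $(0,\infty)$ and therefore does not generate exponential growth; and, using the induction hypotheses for $g_{n-2}$, $\partial_{tt}g_{n-2}$ and for the $u_{k,p}$ entering $f(u_a)_{n,1}^{(r)}$ together with Moser-type product estimates (placing a $g_0$-factor, or else the slowest-growing factor, in $L^\infty$), the right-hand side is bounded by $C_0+C_0(1+t')^{\,n-2}$. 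Gr\"onwall then yields $\|g_n(t)\|_{H^{s-2n}}\le C_0+C_0(1+t)^{\,n-1}$, so the linear equation \eqref{Schro-gn} has a global solution with time-continuity read off from the Duhamel formula; differentiating in $t$ and solving algebraically for $\partial_t g_n$ (at a cost of two spatial derivatives through $\Delta g_n$) propagates the same $(1+t)^{n-1}$ rate to $\partial_t^j g_n$ in $H^{s-2n-2j}$, which is \eqref{reg-gn-0-defo}--\eqref{est-g2-1}.

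The harmonics $u_{n,p}$ with $p\ge3$ are then handled algebraically: by Proposition \ref{gn=0-n-odd} and Corollary \ref{prop-Unp-gn}, $u_{n,p}$ and each $\partial_t^j u_{n,p}$ is a polynomial in $g_0,g_2,\dots,g_{n-2}$ and their spatial derivatives up to order $n-2$, whose fastest-growing factor is $g_{n-2}\sim(1+t)^{n-3}$; inserting the bounds already obtained and using the algebra property of $H^{s'}$ (with $s'\ge s-2n+4-2j>\tfrac32$ in the stated range) gives $\|\partial_t^j u_{n,p}(t)\|_{H^{s-2n+4-2j}}\le C_0+C_0(1+t)^{\,n-3}$, which is \eqref{reg-unp-defo}; the case $n=K_a+2$ is the same computation, since it only involves $g_0,\dots,g_{K_a}$, already controlled.

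The hard part will be the bookkeeping inside the inductive step: one must verify that every monomial appearing in $f(u_a)_{n,1}^{(r)}$ and in the formulas for $u_{n,p},\partial_t^j u_{n,p}$ either carries a $g_0$-factor (to be absorbed through its $(1+t)^{-3/2}$ decay) or is dominated by a polynomial of the claimed degree, and one must keep an exact count of the spatial derivatives consumed at each stage --- $2n$ for $g_n$, two more per time derivative, four more in passing $g_{n-2}\rightsquigarrow g_n$ via the term $\partial_{tt}g_{n-2}$, and $2n-4$ (plus $2j$) for $u_{n,p}$ --- so that no Sobolev index falls to $\tfrac32$ or below; tracking these losses through $n=0,2,\dots,K_a+2$ is precisely what forces $s>s_a=2K_a+4+\tfrac32$. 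The only genuinely analytic input, which turns the naive exponential Gr\"onwall bound into the polynomial rate $(1+t)^{n-1}$, is the dispersive decay of $g_0$ supplied by Proposition \ref{prop-Sch-global}.
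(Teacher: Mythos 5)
Your proposal matches the paper's proof in all essential respects: the base case is handled by Proposition \ref{prop-Sch-global} plus bootstrapping time derivatives through the NLS equation; the inductive step splits $f(u_a)_{n,1}$ into the linear-in-$g_n$ part with kernel $3\lambda g_0^2$ (integrable in $t$ by the $(1+t)^{-3/2}$ decay of $g_0$) and a remainder built from $g_0,\dots,g_{n-2}$, then applies Duhamel and Gr\"onwall to obtain the $(1+t)^{n-1}$ growth; and the harmonics $u_{n,p}$, $p\ge 3$, are estimated via the cascade recurrences of Proposition \ref{gn=0-n-odd} and Corollary \ref{prop-Unp-gn}. This is precisely the paper's argument, including the observation that the dispersive decay of $g_0$ is the single analytic ingredient that converts the naive exponential Gr\"onwall bound into a polynomial rate.
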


\begin{proof}[Proof of Proposition \ref{prop-reg-gn-defo}]
 It has been shown in Proposition \ref{prop-Sch-global} that $g_{0} \in L^{\infty}(0,\infty; H^{s}(\R^{3}))$ and $\|g_{0}(t, \cdot)\|_{L^{\infty}(\R^{3})}$ decays as $t^{-\frac{3}{2}}$ as $t\to \infty.$ For each $n\geq 2,$ the equations  \eqref{Schro-gn}  for $g_{n}$ are linear Schr\"odinger equations. By induction, it is rather straightforward to deduce that  $g_{n}\in C([0,\infty); H^{s-2n}(\R^{3}))$, for each $n\in \{2,3,\cdots, K_{a}\}\cap \Z_{\rm even}$, which is exactly \eqref{reg-gn-0-defo}.

 \medskip

 It is left to show the estimates in \eqref{est-g2-1}--\eqref{reg-unp-defo}. This is done by induction.  Firstly, using the fact $g_{0} \in L^{\infty}(0,\infty; H^{s}(\R^{3}))$ and the cubic Schr\"odinger equation \eqref{eq-g-cub} for $g_{0}$, we have
 \ba\label{est-dt-g0-1}
 \|\d_{t}g_{0} (t, \cdot)\|_{H^{s-2}} \leq C \|\Delta  g_{0}(t,\cdot)\|_{H^{s-2}}  + C \|\, |g_{0}(t,\cdot)|^{2} g_{0}(t,\cdot)\|_{H^{s-2}}  \leq C_{0},\nn
 \ea
where we  used the classical estimates in Sobolev spaces:
\ba\label{Sobolev-mul}
&\|uv\|_{H^{s}} \leq C \big(\|u\|_{H^{s}} \|v\|_{L^{\infty}} + \|u\|_{L^{\infty}} \|v\|_{H^{s}} \big), \quad &&\mbox{for all $s>0$},\\
&\|u\|_{L^{\infty}}\leq C \|u\|_{H^{s}}, \quad &&\mbox{for all $s>\frac{3}{2}$}.
\ea
Applying $\d_{t}^{j}$ to the cubic Schr\"odinger equation \eqref{eq-g-cub} for $g_{0}$ and by induction argument, we can obtain
 \ba\label{est-dt-g0-2}
 \|\d_{t}^{j}g_{0} (t, \cdot)\|_{H^{s-2j}}   \leq C_{0}, \quad \mbox{for all  $\ j\leq \frac{K_{a}}{2}.$}
 \ea
This is \eqref{est-g2-1} with $n=0$.

Next, we show the estimate of $g_{2}$. By the equations \eqref{wkb-21-2}--\eqref{g20-2} for $g_{2}$ and the Duhamel formula we have for all $t\in (0,\infty)$,
 \ba\label{g2-duha-de-1}
\| g_{2}(t,\cdot)\|_{H^{s-4}}  & \leq   \| g_{2}(0,\cdot)\|_{H^{s-4}}  + \int_{0}^{t} \frac{1}{2}  \big(\|\d_{tt} g_{0}(t', \cdot)\|_{H^{s-4}}  + \| f(u_a)_{2,1}(t', \cdot)\|   \big) \dd t' \\
& \leq C_{0} + \int_{0}^{t}  \frac{1}{2}\big(C_{0}  + \| f(u_a)_{2,1}(t', \cdot)\|_{H^{s-4}}\big)\dd t' .
\ea
By the form of $f(u_a)_{2,1}$ in \eqref{wkb-21-0} and using Proposition \ref{prop-Sch-global} and \eqref{Sobolev-mul}, we have
\ba\label{est-f21-defo}
\| f(u_a)_{2,1}(t,\cdot)\|_{H^{s - 4}} &\leq  3\l \big(\|g_{0}^{2} \bar g_{2} \|_{H^{s-4}} + 2\| |g_{0}|^{2} g_{2} \|_{H^{s-4}} +  \frac{\l}{8} \| |g_{0}|^{4} g_{0} \|_{H^{s-4}}  \big)\\
&\leq  C_{0} \|g_{0}\|_{H^{s-4}} \|g_{0}\|_{L^{\infty}} \| g_{2} \|_{H^{s-4}} + C_{0} \|g_{0}\|_{H^{s}} \|g_{0}\|_{L^{\infty}}^{4}\\
&\leq   C_0(1+t)^{-\frac{3}{2}} \| g_{2} (t,\cdot)\|_{H^{s-4}} +  C_0(1+t)^{-\frac{3}{2}} .
\ea

\medskip

Then,  from \eqref{g2-duha-de-1} and \eqref{est-f21-defo},  we deduce for all $t\in (0,\infty)$ that
 \ba\label{g2-duha-de-2}
\| g_{2}(t,\cdot)\|_{H^{s-4}}  & \leq   \| g_{2}(0,\cdot)\|_{H^{s-4}}  + \int_{0}^{t} \frac{1}{2}  \big(\|\d_{tt} g_{0}(t',\cdot )\|_{H^{s-4}}  + \|f(u_a)_{2,1}(t',\cdot)\|_{H^{s-4}}   \big) \dd t'\\
& \leq  C_0(1+t) +  \int_{0}^{t} \left[  C_0(1+t)^{-\frac{3}{2}} \| g_{2} (t',\cdot)\|_{H^{s}} +  C_0(1+t)^{-\frac{3}{2}} \right] \dd t'\\
&\leq C_0(1+t)  +      \int_{0}^{t}   C_0(1+t)^{-\frac{3}{2}} \| g_{2} (t',\cdot)\|_{H^{s-4}} \dd t'.
\ea
Applying Gronwall's inequality to \eqref{g2-duha-de-2} gives
 \ba\label{g2-duha-de-3}
\| g_{2}(t,\cdot)\|_{H^{s-4}}  \leq C_0(1+t)+ \int_{0}^{t} C_0(1+t)  (1 + t')^{-\frac{3}{2}} e^{\int_{t'}^{t}  C_{0}(1 + r)^{-\frac{3}{2}}  \dd r}\dd t'  \leq  C_0(1+t) .\nn
 \ea
 Then using the Schr\"odinger equation \eqref{wkb-21-2} for $g_{2}$, we have
 \ba\label{g2-duha-de-8}
&\| \d_{t} g_{2}(t,\cdot)\|_{H^{s-6}}    \leq  C \big(\| \d_{t}^{2} g_{0}(t,\cdot)\|_{H^{s-6}} +  \| \Delta  g_{2}(t,\cdot)\|_{H^{s-6}}  +   \| f(u_{a})_{2,1} (t,\cdot)\|_{H^{s-6}}  \big) \\
&\quad  \leq C \big(\| \d_{t}^{2} g_{0}(t,\cdot)\|_{H^{s-6}} +  \| g_{2}(t,\cdot)\|_{H^{s-4}}  +   \|g_{0} (t,\cdot) \|_{H^{s-6}}^{2}  \| g_{2} \|_{H^{s-6}} + \|g_{0}(t,\cdot) \|_{H^{s-6}}^{5} \big)\\
 &\quad \leq  C_0(1+t) .\nn
\ea
Applying $\d_{t}^{j}$ to the Schr\"odinger equation \eqref{wkb-21-2} for $g_{2}$ and  using induction argument, we can obtain
 \ba\label{g2-duha-de-9}
 \|\d_{t}^{j}g_{2} (t, \cdot)\|_{H^{s-4-2j}}   \leq C_0(1+t) , \quad \mbox{for all $ j \leq \frac{K_{a}}{2} - 2.$}\nn
 \ea
This is \eqref{est-g2-1} with $n=2$.

\medskip

By \eqref{U2-form} and Proposition \ref{prop-Sch-global}, we have
 \ba\label{est-u23-defo}
\| u_{2,3}(t,\cdot)\|_{H^{s}}   \leq \frac{\l}{8}\| g_{0}^{3}(t,\cdot) \|_{H^{s}} \leq C \|g_{0}(t,\cdot)\|_{L^{\infty}}^{2}  \|g_{0}(t,\cdot)\|_{H^{s}}\leq  C_0(1+t) ^{-3} \leq C_{0}.\nn
\ea
Using the estimates \eqref{est-dt-g0-2} for $g_{0}$ gives
 \ba\label{est-u23-defo-2}
\| \d_{t}^{j} u_{2,3}(t,\cdot)\|_{H^{s-2j}}  \leq C_{0},  \quad \mbox{for all  $ j\leq \frac{K_{a}}{2}.$}\nn
\ea
Similarly, by \eqref{wkb-23-2} and \eqref{wkb-25}, we have
 \ba\label{est-u43-defo}
\| \d_{t}^{j} u_{4,3}(t,\cdot)\|_{H^{s-2-2j}}   \leq   C_0(1+t) ,  \quad \| \d_{t}^{j}u_{4,5}(t,\cdot)\|_{H^{s-2j}}  \leq  C_{0}, \ \mbox{for all $ j \leq \frac{K_{a}}{2} - 2$}.\nn
\ea

Up to now, we have shown  \eqref{est-g2-1} for $n=0,2$ and \eqref{reg-unp-defo} for $n=0,2,4$. Let $k\geq 2$ be even. By induction we assume \eqref{est-g2-1} hold all $n\leq k$ and \eqref{reg-unp-defo} hold for all $n \leq k+2$. We shall show \eqref{est-g2-1} holds for $n = k+2$ and \eqref{reg-unp-defo} holds for  $n  =  k+4$.

\medskip

We first show \eqref{est-g2-1} with $n=k+2$ concerning the estimate of $g_{k+2}$. This estimate is the crucial one.  Recall the equation for $g_{k+2}$:
\be\label{Schro-g-k+2-even}
 2 i \d_t   g_{k+2}  - \Delta  g_{k+2}  + \d_{tt} g_{k} +  f(u_{a})_{k+2,1} = 0.
\ee
As in \eqref{f-ua-n1-3}  we can write
\ba \label{f-ua-k+2-1}
 f(u_{a})_{k+2,1} & =   f(u_{a})_{k+2,1}^{(0)}  + f(u_{a})_{k+2,1}^{(r)},  \\
  f(u_{a})_{k+2,1}^{(0)} &: =   3 \l ( g_{0}^{2} \bar g_{k+2} + 2 |g_{0}|^{2} g_{k+2}), \\
  f(u_{a})_{k+2,1}^{(r)} & : = \l \sum_{n_{1}+ n_{2} + n_{3} = k+2; n_{1}, n_{2}, n_{3} \leq k}\; \sum_{p_{1}+p_{2} + p_{3} = 1} u_{n_{1},p_{1}} u_{n_{2},p_{2}} u_{n_{3},p_{3}},
 \ea
where we used the fact that $u_{n} = 0$ for all odd $n$ which follows from Proposition \ref{gn=0-n-odd}.

Similar as the estimate \eqref{est-f21-defo}, we have
\ba\label{f-n1-0-2-k+2}
\| f(u_{a})_{k+2,1}^{(0)}(t, \cdot)\|_{H^{s - 2(k+2)}}  & \leq 3\l \big(\|g_{0}^{2} \bar g_{k+2} \|_{H^{s-2(k+2)}} + 2\| |g_{0}|^{2} g_{k+2} \|_{H^{s-2(k+2)}} \big)\\
& \leq C \|g_{0}\|_{H^{s-2(k+2)}} \|g_{0}\|_{L^{\infty}} \| g_{k+2} \|_{H^{s-2(k+2)}} \\
&\leq  C_0(1+t) ^{-\frac{3}{2}} \| g_{k+2} (t,\cdot)\|_{H^{s-2(k+2)}}.
\ea

Observe that in the sum of $f(u_{a})_{k+2,1}^{(r)}$, at most one of $n_{i}$ is $0$. Then we can write
\ba \label{f-ua-k+2-2}
 f(u_{a})_{k+2,1}^{(r)} & : = 3 \l \sum_{ n_{1}+ n_{2} = k+2; 2 \leq n_{1}, n_{2} \leq k}\; \sum_{p_{1}+p_{2} + p_{3}= 1} u_{n_{1},p_{1}} u_{n_{2},p_{2}} u_{0,p_{3}} \\
 &\quad + \l \sum_{n_{1}+ n_{2} + n_{3} = k+2; 2 \leq n_{1}, n_{2}, n_{3} \leq k} \; \sum_{p_{1}+p_{2} + p_{3} = 1} u_{n_{1},p_{1}} u_{n_{2},p_{2}} u_{n_{3},p_{3}}.
 \ea
By the induction assumptions, we have for each $n_{1}+ n_{2} = k+2, \ 2 \leq n_{1}, n_{2} \leq k$, that
\ba \label{est-f-ua-k+2-1}
& \|u_{n_{1},p_{1}}(t,\cdot) u_{n_{2},p_{2}}(t,\cdot) u_{0,p_{3}}(t,\cdot)\|_{H^{s-2k}} \\
& \leq C  \|u_{n_{1},p_{1}} (t,\cdot)\|_{H^{s-2k}}  \| u_{n_{2},p_{2}} (t,\cdot) \|_{H^{s-2k}}  \|u_{0,p_{3}}(t,\cdot)\|_{H^{s-2k}} \\
 & \leq  C_0(1+t) ^{n_{1}+n_{2}-2} \leq C_0(1+t) ^{k},
  \ea
 and for each $n_{1}+ n_{2} + n_{3}= k+2, \ 2 \leq n_{1}, n_{2} , n_{3}\leq k$, that
  \ba \label{est-f-ua-k+2-2}
  & \|u_{n_{1},p_{1}}(t,\cdot) u_{n_{2},p_{2}} (t,\cdot) u_{n_{3},p_{3}}(t,\cdot)\|_{H^{s-2k}} \\
& \leq C  \|u_{n_{1},p_{1}} (t,\cdot)\|_{H^{s-2k}}  \| u_{n_{2},p_{2}}(t,\cdot) \|_{H^{s-2k}}  \|u_{n_{3},p_{3}}(t,\cdot)\|_{H^{s-2k}} \\
 & \leq C_0(1+t) ^{n_{1}+n_{2}+n_{3}-3} \leq C_0(1+t)^{k-1}.
 \ea
We then deduce from \eqref{f-ua-k+2-2}--\eqref{est-f-ua-k+2-2}  that
 \ba \label{est-f-ua-k+2}
\| f(u_{a})_{k+2,1}^{(r)}\|_{H^{s-2k}} \leq C_0(1+t) ^{k} .
 \ea
Thus, following the argument in \eqref{g2-duha-de-1}--\eqref{g2-duha-de-2},  by \eqref{f-n1-0-2-k+2} and \eqref{est-f-ua-k+2},  we have
\ba\label{gk+2-duha-de-1}
&\| g_{k+2}(t,\cdot)\|_{H^{s-2(k+2)}}  \\
& \leq   \| g_{k+2}(0,\cdot)\|_{H^{s-2(k+2)}}  + \int_{0}^{t} \frac{1}{2}  \big[\|\d_{tt} g_{k}(t',\cdot )\|_{H^{s-2(k+2)}}  + \|f(u_{a})_{k+2,1}(t',\cdot)\|_{H^{s-2(k+2)}}    \big] \dd t'\\
&\leq C_{0}+ C_{0} \int_{0}^{t} \left[ (1+t')^{k} +  (1+t')^{-\frac{3}{2}} \| g_{k+2}(t',\cdot)\|_{H^{s-2(k+2)}}\right]\dd t'.
\ea
By Gronwall's inequality, we can deduce from \eqref{gk+2-duha-de-1} that
 \ba\label{gk+2-duha-de-2-7}
\| g_{k+2}(t,\cdot)\|_{H^{s-2(k+2)}} & \leq C_0(1+t) ^{k+1}.
\ea
 This is the estimate \eqref{est-g2-1} with $n=k+2$ and $j=0$. The estimates of the time derivatives $\d_{t}^{j} g_{k+2}$  follows from the estimates of $g_{k+2}$, the Schr\"odinger equation \eqref{Schro-g-k+2-even} for $g_{k+2}$ and the induction assumptions.  The computation is rather straightforward and we omit the details.

\medskip

We next  show \eqref{reg-unp-defo} holds for  $n = k+4$. By the second equation in $\eqref{Unp-even-1}$, we have for each $p\geq 3$ that
\ba\label{unp=k+4}
 u_{k+4,p}  = \frac{1}{1-p^{2}} \Big(- f(u_{a})_{k+2,p} - \d^{2}_{t} u_{k,p} + \Delta  u_{k+2,p} -  2ip \d_{t} u_{k+2,p} \Big) .
\ea
By induction assumptions of \eqref{reg-unp-defo} with  $n \leq  k+2$, we have for $p\geq 3$
\ba\label{unp=k+4-1}
\| \d^{2}_{t} u_{k,p} \|_{H^{s - 2 (k+4) + 4}} &= \|  \d_{t}^{2} u_{k,p} \|_{H^{s - 2 k - 4 }}  \leq  C_{0} +  C_{0}(1+  t)^{k-3} \leq C_0(1+t) ^{k-1},\\
\| \Delta  u_{k+2,p} \|_{H^{s - 2 (k+4) + 4}} &= \|   u_{k+2,p} \|_{H^{s - 2 (k+2) + 2 }}  \leq C_0(1+t) ^{k-1} , \\
\| \d_{t} u_{k+2,p} \|_{H^{s - 2 (k+4) + 4}} &= \|   \d_{t} u_{k+2,p} \|_{H^{s - 2 (k+2)  }} \leq C_0(1+t) ^{k-1} .
\ea
Similar as the decomposition \eqref{f-ua-k+2-1} for $f(u_{a})$, using the induction assumptions and the estimate \eqref{gk+2-duha-de-2-7} for $g_{k+2}$ we have shown in the previous step, we can deduce
\ba\label{unp=k+4-2}
 \| f(u_{a})_{k+2,p}  \|_{H^{s - 2 (k+4) + 4}} \leq  C_0(1+t) ^{k+1} .
\ea
By \eqref{unp=k+4}--\eqref{unp=k+4-2} we obtain for each $p\geq 3$ that
\ba\label{unp=k+4-3}
 \| u_{k+4,p}  \|_{H^{s - 2 (k+4) + 4}} \leq  C_0(1+t) ^{k+1} .\nn
\ea
This is \eqref{reg-unp-defo} with  $n = k+4, j=0$. The estimates for the time derivatives $\d_{t}^{j} u_{k+4,p}$ follows from the induction assumptions and the estimates for $g_{k+2}$ we have obtained in the previous step.

By induction principal, we thus complete the proof of the estimates in \eqref{est-g2-1}--\eqref{reg-unp-defo}.
\end{proof}

A direct corollary of Propositions \ref{gn=0-n-odd} and \ref{prop-reg-gn-defo} is the following:
\begin{corollary}\label{prop-reg-Un-defo}
Under the assumptions in Proposition \ref{prop-reg-gn-defo}, there exists a WKB solution $U_{a}$ of the form \eqref{def-wkb} with $U_{n,p} \in C([0,\infty); H^{s-2n}(\R^{3}))$ satisfying the properties stated in Propositions \ref{prop-Un1}, \ref{prop-Unp-p-large}, \ref{gn=0-n-odd},  \ref{ini-Un=0}, \ref{prop-reg-gn-defo}  and Corollary \ref{prop-Unp-gn}. In particular, the amplitudes $U_{n,p}$  satisfy the following estimates for all $t\in (0,\infty)$:
\ba\label{reg-un-defo-1}
&\|U_{n,1}(t,\cdot)\|_{H^{s-2n}}  \leq  C_{0}+C_{0}(1+ t)^{n-1},  \ \  n\in \{0,1,, \cdots, K_{a}\}\cap \Z_{\rm even},\\
& \|U_{K_{a}+2,1}(t,\cdot)\|_{H^{s-2K_{a} -2}}  \leq  C_0 + C_0(1+t) ^{K_{a}-1},\\
&\|U_{n,1}(t,\cdot)\|_{H^{s-2n+1}}  \leq   C_0 + C_0(1+t) ^{n-2} , \ \  n\in \{1,2,\cdots, K_{a}+2\}\cap \Z_{\rm odd},\\
& \|U_{n,p}(t,\cdot)\|_{H^{s-2n+4}}  \leq  C_0 + C_0(1+t) ^{n-3} , \ \   p\geq 3,  \ n\in \{ 1,2,\cdots, K_{a}+2\}\cap \Z_{\rm even},\\
& \|U_{n,p}(t,\cdot)\|_{H^{s-2n+5}}  \leq   C_0 + C_0(1+t) ^{n-4} , \ \  p\geq 3,  \ n\in \{1,2,\cdots, K_{a}+2\}\cap \Z_{\rm odd}.
\ea
In addition, the initial perturbation satisfies
\ba\label{ini-ua-u-defo}
\|U(0,\cdot) - U_{a}(0, \cdot)\|_{H^{s-2 K_{a}-2}} \leq C_{0} \e^{K_{a}+1}.
\ea
\end{corollary}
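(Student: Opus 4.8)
The plan is to \emph{assemble} the WKB solution out of the structural results of Section~\ref{sec:wkb} together with the growth bounds of Proposition~\ref{prop-reg-gn-defo}, and then to read off \eqref{reg-un-defo-1}--\eqref{ini-ua-u-defo} by substitution. Concretely: set $g_n\equiv 0$ for every odd $n\le K_a$ and $g_{K_a+1}=g_{K_a+2}=0$; for even $n\in\{2,\dots,K_a\}$ let $g_n$ solve the linear Schr\"odinger equation \eqref{Schro-gn} with the initial datum prescribed in Proposition~\ref{ini-Un=0}, and let $g_0$ solve \eqref{eq-g-cub}; by Proposition~\ref{prop-reg-gn-defo} all of these exist globally in time. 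Define the amplitudes $U_{n,p}$, $0\le n\le K_a+2$, $p\in\calH_n$, by the explicit formulas of Propositions~\ref{prop-Un1} and \ref{gn=0-n-odd} (with $U_{n,p}=0$ for $p$ even and for $|p|>p(n)$, by Propositions~\ref{prop-p-odd} and \ref{prop-Unp-p-large}), put $U_a:=\sum_{n=0}^{K_a+2}\e^n\sum_{p\in\calH_n}e^{ip\th}U_{n,p}$, and observe that $\Phi_{n,p}=0$ for $-2\le n\le K_a$ and all $p$ by construction, so that $U_a$ solves \eqref{eq-WKB-fo} with $\e^{K_a+1}R_\e$ collecting the uncompensated terms of orders $\e^{K_a+1}$ and $\e^{K_a+2}$; since $s>s_a=2K_a+4+\frac{3}{2}$ every amplitude lives in a Banach algebra $H^{s-2n}$ ($n\le K_a+2$), and the bound \eqref{est-remainders-defo} of Theorem~\ref{thm:app-defo}(i) follows from the same amplitude estimates established below.

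For \eqref{reg-un-defo-1} I would run through the three families. For $p=1$ and $n$ even, Proposition~\ref{prop-Un1} gives $U_{n,1}=g_ne_++(0,\d_tg_{n-2},0)^{\rm T}$ (using $g_{n-1}=0$); then $\|g_n\|_{H^{s-2n}}\le C_0+C_0(1+t)^{n-1}$ from \eqref{est-g2-1}, while $\|\d_tg_{n-2}\|_{H^{s-2n+2}}\le C_0+C_0(1+t)^{n-3}$ together with $H^{s-2n+2}\hookrightarrow H^{s-2n}$ yields the first line; for $n=K_a+2$ one uses $g_{K_a+2}=0$ and the Schr\"odinger equation to express $\d_tg_{K_a}$ through $\Delta g_{K_a}$, $\d_{tt}g_{K_a-2}$ and $f(u_a)_{K_a,1}$. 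For $p=1$ and $n$ odd, Proposition~\ref{gn=0-n-odd} gives $U_{n,1}=(\nabla g_{n-1},0,0)^{\rm T}$, so $\|U_{n,1}\|_{H^{s-2n+1}}\le\|g_{n-1}\|_{H^{s-2n+2}}\le C_0+C_0(1+t)^{n-2}$. For $p\ge 3$, Proposition~\ref{gn=0-n-odd} writes $U_{n,p}=(0,\d_tu_{n-2,p}+ipu_{n,p},u_{n,p})^{\rm T}$ when $n$ is even and $U_{n,p}=(\nabla u_{n-1,p},0,0)^{\rm T}$ when $n$ is odd; plugging in $\|u_{n,p}\|_{H^{s-2n+4}}\le C_0+C_0(1+t)^{n-3}$ and $\|\d_tu_{n-2,p}\|_{H^{s-2n+6}}\le C_0+C_0(1+t)^{n-5}$ from \eqref{reg-unp-defo}, with the obvious embeddings, yields the last two lines. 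Continuity of each $U_{n,p}$ in $t$ is inherited from that of the $g_k$'s (Proposition~\ref{prop-reg-gn-defo}) through these algebraic identities and Corollary~\ref{prop-Unp-gn}.

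For the initial perturbation, Proposition~\ref{ini-Un=0} yields $U_n(0,\cdot)=0$ for $2\le n\le K_a+1$, while $g_0(0,\cdot)=\frac{\phi-i\psi}{2}$ and $g_1(0,\cdot)=0$ were chosen so that $(U_0+\e U_1)(0,\cdot)=U(0,\cdot)$; hence $U_a(0,\cdot)-U(0,\cdot)=\e^{K_a+2}U_{K_a+2}(0,\cdot)$. As $\phi,\psi\in H^s$ with $s>2K_a+4+\frac{3}{2}$, every factor in $U_{K_a+2}(0,\cdot)$ is a product of at most $K_a$ spatial derivatives of $\phi,\psi$, so $U_{K_a+2}(0,\cdot)\in H^{s-2K_a}\subset H^{s-2K_a-2}$ with norm $\le C_0$, and therefore $\|U(0,\cdot)-U_a(0,\cdot)\|_{H^{s-2K_a-2}}\le C_0\e^{K_a+2}\le C_0\e^{K_a+1}$, which is \eqref{ini-ua-u-defo}.

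Thus, once Proposition~\ref{prop-reg-gn-defo} is granted, no serious obstacle remains: the corollary is pure index- and power-bookkeeping through the explicit WKB formulas. The genuine difficulty lies entirely inside Proposition~\ref{prop-reg-gn-defo} — obtaining the $(1+t)^{n-1}$ rate for $\|g_n\|$, which rests on splitting $f(u_a)_{n,1}$ into the part $3\l(g_0^2\bar g_n+2|g_0|^2g_n)$, whose coefficient decays like $(1+t)^{-\frac{3}{2}}$ by the dispersive decay of $g_0$, and a lower-order remainder growing at most like $(1+t)^{n-1}$, so that Gronwall produces polynomial rather than exponential growth. At the present level that input is in hand, and the only point to be watchful about is that the derivative-carrying entries $\d_tg_{n-2}$, $\d_tu_{n-2,p}$, $\nabla u_{n-1,p}$ of each $U_{n,p}$ land in the asserted Sobolev space with the asserted time growth — which is exactly the exponent-matching carried out above.
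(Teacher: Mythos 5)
Your proposal is correct and takes essentially the same approach as the paper: both proofs simply assemble the explicit WKB formulas from Propositions~\ref{prop-Un1} and \ref{gn=0-n-odd} (with $g_{K_a+1}=g_{K_a+2}=0$) and substitute the growth bounds of Proposition~\ref{prop-reg-gn-defo} and Corollary~\ref{prop-Unp-gn}, while the initial perturbation follows from \eqref{ini-pert-defo}. One small imprecision: $U_{K_a+2}(0,\cdot)$ carries up to $K_a+2$ (not $K_a$) spatial derivatives of $(\phi,\psi)$ because $\d_t g_{K_a}(0,\cdot)$ picks up $\Delta g_{K_a}(0,\cdot)$ through the Schr\"odinger equation, but since $s-(K_a+2)\ge s-2K_a-2$ for $K_a\ge 0$ the conclusion $\|U_{K_a+2}(0,\cdot)\|_{H^{s-2K_a-2}}\le C_0$ is unaffected.
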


\begin{proof}

By choosing $g_{_{K_{a}+1}} = g_{_{K_{a}+2}} = 0$ and applying Propositions \ref{prop-Un1}, \ref{prop-Unp-p-large}, \ref{gn=0-n-odd},  \ref{ini-Un=0} and \ref{prop-reg-gn}, \ref{prop-reg-gn-defo} and Corollary \ref{prop-Unp-gn}, we can construct a WKB solution $U_{a}$ of the form \eqref{def-wkb}  with $U_{n,p} \in C([0,\infty); H^{s-2n}(\R^{3}))$ satisfying \eqref{reg-un-defo-1}. The estimate of the initial perturbation \eqref{ini-ua-u-defo} follows from  \eqref{ini-pert-defo} and \eqref{reg-un-defo-1}.
\end{proof}

\subsection{Proof of Theorem \ref{thm:app-defo}}
Now we summarize the results in Propositions \ref{prop-reg-gn}  and \ref{prop-reg-gn-defo} to prove Theorem \ref{thm:app-defo}.  Notice that the form of $U_{a}$ as well as $U_{n,p}$ are the same for both focusing case and defocusing case. The main difference is that for the defocusing case $U_{a}$ can be globally constructed. For the WKB solution $U_{a}$ constructed in Propositions \ref{prop-reg-gn}  and \ref{prop-reg-gn-defo} by imposing $g_{_{K_{a}+1}} = g_{_{K_{a}+2}}  = 0$, there holds $\Phi_{n,p} = 0$ for all $n=-2,-1,0,\cdots, K_{a}$ and all $p\in \Z$. Then there holds
\ba\label{eq-Ua-1}
\d_t \Big(\sum_{n=0}^{K_{a}} \e^{n} U_n\Big)  + \Big(\sum_{n=K_{a}+1}^{K_{a}+2} \e^{n-2} \sum_{p\in \calH_{n}} e^{\frac{ipt}{\e^{2}}}(ip + U_{n,p})\Big) - \frac{1}{\e}A(\nabla) \Big(\sum_{n=0}^{K_{a}+1}\e^{n}  U_n\Big)  \\ +\frac{1}{\e^2} A_0 \Big(\sum_{n=0}^{K_{a}+2} \e^{n}  U_n\Big)    =  \sum_{n=0}^{K_{a}} \e^{n}  F(U_a)_{n}.\nn
\ea
Then $U_{a}$ satisfies the equation
\ba\label{eq-Ua-2}
\d_t  U_{a}  - \frac{1}{\e}A(\nabla)  U_{a}  +\frac{1}{\e^2} A_0  U_{a}  = F(U_a) - \e^{K_{a}+1} R_{\e},\nn
\ea
with
\ba\label{Re-1}
R_{\e}  : = \sum_{p\in \calH_{K_{a}+2}} e^{\frac{ipt}{\e^{2}}} (\d_{t} U_{K_{a}+1,p} + \e \d_{t} U_{K_{a}+2,p}) - A(\nabla) U_{K_{a}+2} - \sum_{n=K_{a}+2}^{3 (K_{a}+2)} \e^{n-K_{a}-1}  F(U_a)_{n}.\nn
\ea

As shown in \eqref{ini-pert-defo}, we have
 \ba\label{ini-pert-defo-2}
U_{a}(0,\cdot) =  U(0,\cdot)  - \e^{K_{a}+2} r_{\e}, \quad r_{\e}: =  - U_{K_{a}+2}(0,\cdot).\nn
 \ea
By using the estimates in Proposition \ref{prop-reg-gn} concerning the focusing case and the estimates in Corollary \ref{prop-reg-Un-defo} concerning the defocusing case,  it is clear that $R_{\e}$ and $r_{\e}$ satisfy the estimate \eqref{est-remainders-defo} and \eqref{est-remainders-fo}, respectively. The proof of Theorem \ref{thm:app-defo} is completed.

\section{Stability of WKB solutions}\label{sec:sta-WKB}
In this section, we show the stability of WKB solutions constructed in Section \ref{sec:wkb} and prove Theorem \ref{thm:sta-defo}. Suppose $\phi, \psi \in H^{s}$ with $s> s_{a}:= 2  K_{a} +4 + \frac{3}{2}$.  We shall introduce the perturbation
\ba\label{def-dotU}
\dot U = \bp \dot w\\ \dot v \\ \dot u \ep: = \frac{U-U_{a}}{\e^{K_{a}+1}}.
\ea
Thus $\dot U$ solves
 \ba\label{eq-dotU}
\left\{ \begin{aligned}
&\d_t \dot U  - \frac{1}{\e}A(\nabla) \dot U  +\frac{1}{\e^2} A_0 \dot U  = \frac{1}{\e^{K_{a}+1}}\big(F(U) -  F(U_{a})\big) +  R_\e ,\\
& \dot U (0,\cdot) = \e r_\e ,\end{aligned}\right.
\ea
where $R_{\e}$ and $r_{\e}$ satisfy \eqref{est-remainders-fo} for the focusing case and satisfy \eqref{est-remainders-defo} for the defocusing case. We first observe that
\ba\label{FU-FUa}
\frac{1}{\e^{K_{a}+1}}\big(f(u) -  f(u_{a})\big) & = \frac{1}{\e^{K_{a}+1}}\big(f(u_{a}+\e^{K_{a}+1} \dot u) -  f(u_{a})\big) \\
 &= \l(3 u_{a}^{2} \dot u + 3 \e^{K_{a}+1} u_{a} \dot u^{2} + \e^{2(K_{a}+1)} \dot u^{3}).
\ea
Throughout this section, we let $\tilde s_{a} := s -2K_{a} - 4 > \frac{3}{2}.$

\subsection{Proof of Theorem \ref{thm:sta-defo}: Part (i)} \label{sec:proof-defo}
For the defocusing case, we see in Theorem \ref{thm:app-defo} or Corollary \ref{prop-reg-Un-defo} that a global-in-time WKB approximate solution $U_{a}$ can be constructed. We shall show that such a WKB solution $U_{a}$ is stable over long time. In this section, we consider high regularity case with $K_{a}\in \Z_{+}\cap \Z_{\rm even}$ and prove Theorem  \ref{thm:sta-defo} (iii).   Since \eqref{eq-dotU} is a symmetric hyperbolic system in $\dot U$, there exists a unique solution $\dot U\in C([0,T_{\e}^{*}); H^{\tds_{a}} )$ with $T_{\e}^{*}$ the existence time.   Then for all $t\in (0, T_{\e}^{*})$, using Duhamel formula gives
\ba\label{duha-dotU}
\dot U (t, \cdot) = \e S(t) r_\e + \int_{0}^{t} S(t-t') \frac{1}{\e^{K_{a}+1}}\big(F(U) -  F(U_{a})\big)(t',\cdot) \dd t' + \int_{0}^{t} S(t-t') R_{\e}(t',\cdot) \dd t',
\ea
where
$$
S(t) := \exp\big(t\big(\frac{1}{\e}A(\nabla) \dot U  - \frac{1}{\e^2} A_0 \big) \big)
$$
 is unitary from $H^{s}$ to $H^{s}$: $\|S(t) U\|_{H^{s}} = \|U\|_{H^{s}} $, for all $s\in \R.$   Thus, by \eqref{FU-FUa}, we know for all $t\in (0,T_{\e}^{*})$ that
\ba\label{est-dotU-defo-1}
&\|\dot U (t, \cdot)\|_{H^{\tilde s_{a}}}  \leq  \e \| r_\e \|_{H^{\tilde s_{a}}} + \int_{0}^{t}\| R_{\e}(t',\cdot)\|_{H^{\tds_{a}}} \dd t'\\
&\quad + \int_{0}^{t} C (\| u_{a}\|_{H^{\tilde s_{a}}} \| u_{a}\|_{L^{\infty}} + \e^{K_{a}+1} \| u_{a} \|_{H^{\tilde s_{a}}} \|\dot u\|_{H^{\tds_{a}}}+\e^{2(K_{a}+1)} \|\dot u\|_{H^{\tds_{a}}}^{2})\|\dot u\|_{H^{\tds_{a}}} (t') \dd t',
\ea
where $r_{\e}$ and $R_{\e}$ satisfy \eqref{est-remainders-defo},   $\tds_{a} : = s_{a} - 2K_{a} - 4 >\frac{3}{2}$.

By Proposition \ref{prop-reg-gn-defo}, we have
 \ba\label{est-ua-3d-1}
 \|u_{a}(t,\cdot)\|_{H^{\tds_{a}}} &  = \|(u_{0} +  \e^{2} u_{2} + \cdots + \e^{K_{a}}  u_{_{K_{a}}} + \e^{K_{a}+2} u_{_{K_{a}+2}})(t,\cdot)\|_{H^{\tds_{a}}}  \\
 & \leq  \|(u_{0}(t,\cdot)\|_{H^{\tds_{a}}}   +  C_{0}\e^{2} (1+t) \big( 1  + \cdots   +    \e^{K_{a}} (1+t)^{K_{a} -1}\big) \\
 & \leq  C_{0}+ C_{0}\e^{2} (1+t)\big(1  + \cdots +   \e^{K_{a}} (1+t)^{K_{a} -1} \big), \ \mbox{for all $t>0$}.
 \ea
Thanks to the decay estimate $\|g_{0}(t,\cdot)\|_{L^{\infty}} \leq  C_0(1+t) ^{-\frac{3}{2}}$ in Proposition \ref{prop-Sch-global}, we have
 \ba\label{est-ua-3d-2}
 \|u_{a}(t,\cdot)\|_{L^{\infty}} &  = \|(u_{0} + \e^{2} u_{2} + \cdots + \e^{K_{a}} u_{_{K_{a}}} + \e^{K_{a}+2} u_{_{K_{a}+2}})(t,\cdot)\|_{L^{\infty}}  \\
 & \leq \|u_{0} \|_{L^{\infty}} + C_{0}\e^{2} (1+t)\big(  1  + \cdots +  \e^{K_{a}} (1+t)^{K_{a} -1}\big) \\
 & \leq C_0(1+t) ^{-\frac{3}{2}} +  C_{0}\e^{2} (1+t) \big(1 + \cdots +    \e^{K_{a}} (1+t)^{K_{a} -1}\big), \ \mbox{for all $t>0$}.
 \ea

Let $ 0<T_{0}\leq 1 $  be determined later. Then from \eqref{est-dotU-defo-1}, \eqref{est-ua-3d-1}, and \eqref{est-ua-3d-2}, we can deduce for all $t\in [0, \min\{T_{\e},\frac{T_{0}}{\e}\})$ that
\ba\label{est-dotU-defo-3d-1}
\|\dot U (t, \cdot)\|_{H^{\tilde s_{a}}} \leq  C_0 (1+t)^{K_{a}}
+ &\int_{0}^{t} \Big(C_0(1+t) ^{-\frac{3}{2}}   + C_0  \e + C_0 \e^{K_{a}+1} \|\dot u\|_{H^{\tds_{a}}}\\
&\qquad  +C\e^{2(K_{a}+1)} \|\dot u\|_{H^{\tds_{a}}}^{2}\Big)\|\dot u\|_{H^{\tds_{a}}}(t') \dd t'.
\ea

For all $t\leq \frac{T_{0}}{\e}$, there holds
\ba\label{est-ini-defo-1}
  C_0 (1+t)^{K_{a}} \leq  C_0 (1+\frac{T_{0}}{\e})^{K_{a}} \leq 2^{K_{a}}  \frac{C_0 }{\e^{K_{a}}}.
\ea
Define
\be\label{def-TT-2}
\widetilde T_{2} :=\sup \Big\{t < \min\big\{T_{\e}^{*}, \frac{T_{0}}{\e}\big\}: \|\dot U \|_{L^\infty(0,t;H^{\tds_{a}})} \leq  e^{3C_0 }2^{K_{a}} \frac{C_0 }{\e^{K_{a}}}\Big\}.
\ee
Then for all $t\leq \widetilde T_{2}$, by \eqref{est-dotU-defo-3d-1} and \eqref{est-ini-defo-1}, there holds
\ba\label{est-dotU-defo-3d-2}
\|\dot U (t, \cdot)\|_{H^{\tilde s_{a}}} \leq 2^{K_{a}}  \frac{C_0 }{\e^{K_{a}}}
+ &\int_{0}^{t}  \Big(C_0 (1+t')^{-\frac{3}{2}}  + C_0 \e +C_0^2    \e e^{3 C_0 }2^{K_{a} } \\
&\qquad + C C_0 ^{2} \e^{2} e^{6C_0 } 2^{2K_{a} }  \Big)\|\dot u\|_{H^{\tds_{a}}}(t') \dd t'.
\ea
Applying Gronwall's inequality to \eqref{est-dotU-defo-3d-2} implies for all $t\leq \widetilde T_{2}$ that
\ba\label{est-dotU-defo-3d-3}
\|\dot U (t, \cdot)\|_{H^{\tilde s_{a}}} & \leq  2^{K_{a}}  \frac{C_0 }{\e^{K_{a}}}e^{\int_{0}^{t} \big( C_0 (1+t')^{-\frac{3}{2}}  + \e \tilde C_0 \big) \dd t'}
 \leq 2^{K_{a}}  \frac{C_0 }{\e^{K_{a}}} e^{2 C_0   +  T_{0}\tilde C_0 }.\nn
\ea
with
$$\tilde C_0 : =  C_0    + C_0^2    e^{3 C_0 }2^{K_{a} } + C C_0^2   \e e^{6 C_0 } 2^{2K_{a} }.$$
Choose $0<T_{0}\leq 1$ small such that
\ba\label{T0-choice-1}
T_{0}\tilde C_0   = T_{0} \big(C_0    + C_0^2    e^{3 C_0 }2^{K_{a} } + C C_0^2   \e e^{6 C_0 } 2^{2K_{a} } \big)\leq  \frac{1}{2} C_0   .
\ea
Then for all  $t\leq \widetilde T_{2}$ there holds
\ba\label{est-dotU-defo-3d-4}
\|\dot U (t, \cdot)\|_{H^{\tilde s_{a}}}  \leq 2^{K_{a}}  \frac{C_0 }{\e^{K_{a}}}  e^{\frac{5}{2} C_0   }.\nn
\ea
Thus, by the definition of $\widetilde T_{2}$ in \eqref{def-TT-2} and continuation argument, we know for each $0<T_{0}\leq 1$ satisfying \eqref{T0-choice-1} there holds 
\ba\label{def-TT-2-new}
\widetilde T_{2} = \min\big\{T_{\e}^{*}, \frac{T_{0}}{\e}\big\}.
\ea

We next claim $T_{\e}^{*} > \frac{T_{0}}{\e}$. Indeed, if $T_{\e}^{*} \leq \frac{T_{0}}{\e}$, then by  \eqref{def-TT-2} and \eqref{def-TT-2-new} one has
$$
\|\dot U \|_{L^\infty(0,t;H^{\tds_{a}})} \leq  e^{3C_0 }2^{K_{a}} \frac{C_0 }{\e^{K_{a}}}, \quad \mbox{for all} \ t < T_{\e}^{*}.
$$
This contradicts to the blow-up criteria:
\ba\label{bp-cre}
\limsup_{t\to T_{\e}^{*}-} \|\dot U \|_{L^\infty(0,t;H^{\tds_{a}})} = +\infty.
\ea

Therefore, by \eqref {est-dotU-defo-3d-1},  for each $0<T_{0}\leq 1$ satisfying \eqref{T0-choice-1} and for all $t\leq \frac{T_{0}}{\e}$, there holds
\ba\label{est-dotU-defo-3d-5}
\|\dot U (t, \cdot)\|_{H^{\tilde s_{a}}} \leq    C_0 (1+t)^{K_{a}}
+ &\int_{0}^{t}  \Big(C_0 (1+t')^{-\frac{3}{2}}  +  C_0  \e +C_0^2    \e e^{3 C_0 }2^{K_{a} } \\
&\qquad + C C_0 ^{2} \e^{2} e^{6C_0 } 2^{2K_{a} } \Big)\|\dot u\|_{H^{\tds_{a}}}(t') \dd t'.
\ea
Then applying Gronwall's inequality to \eqref{est-dotU-defo-3d-5} implies
\ba\label{est-dotU-defo-3d-6}
\|\dot U (t, \cdot)\|_{H^{\tilde s_{a}}} \leq  e^{\frac{5}{2}  C_0 }  C_0 (1+t)^{K_{a}}, \quad \mbox{for all $t\leq \frac{T_{0}}{\e}$}.\nn
\ea
This implies \eqref{stable-defo-3d} and completes the proof of Theorem \ref{thm:sta-defo} (i).

\subsection{Proof of Theorem \ref{thm:sta-defo} (ii)}
The proof of  Theorem \ref{thm:sta-defo} (ii) is rather similar to that of  Theorem \ref{thm:sta-defo} (i) by applying continuation argument. We shall consider a WKB solution of the form \eqref{def-wkb} with $K_{a} = 0$. Thus, by \eqref{U0-form}, \eqref{U1-form} and \eqref{U2-form} with $g_{1} = g_{2} = 0$, we have
$U_{a} = U_{0} + \e U_{1} + \e^{2} U_{2}$
with
\ba\label{Un-Ka=0}
 U_{0} = e^{i\th} g_{0} e_{+} + c.c., \quad U_{1} = e^{i\th} \bp \nabla g_{0}  \\ 0 \\ 0\ep + c.c., \quad  U_{2} = e^{i\th} \bp  0_d \\ \d_{t} g_{0} \\ 0\ep  + e^{3 i\th} \frac{\l g_{0}^{3}}{8} \bp 0_d  \\ 3 i \\ 1\ep + c.c.,
\ea
where $g_{0}$ is the solution to the defocusing cubic  Schr\"odinger equation \eqref{eq-g-cub} and satisfies the estimates in Proposition \ref{prop-Sch-global}. With $K_{a} = 0$, the estimates of the remainders $R_{\e}$ and $r_{\e}$ in \eqref{est-remainders-defo} becomes
\be\label{est-remainders-defo-0}
\|R_{\e} (t,\cdot)\|_{ H^{s-4}} \leq C_0  ,   \quad \|r_{\e} \|_{ H^{s-2}} \leq  C_0 , \quad \mbox{for all $t\in (0,\infty)$}.
\ee
Then, similarly as \eqref{est-dotU-defo-1}, we have for all $t < T_{\e}^{*}$ that
\ba\label{est-dotU-defo-3d-1-0}
\|\dot U (t, \cdot)\|_{H^{s-4}} \leq  \e \| r_\e \|_{H^{s-4}} + &\int_{0}^{t}\Bigl(\| R_{\e}(t',\cdot)\|_{H^{s-4}}
+  C \big((\| u_{a}\|_{H^{s-4}} \| u_{a}\|_{L^{\infty}}  \\
&\quad +  \e \| u_{a} \|_{H^{s-4}} \|\dot u\|_{H^{s-4}}+\e^{2} \|\dot u\|_{H^{s-4}}^{2})\|\dot u\|_{H^{s-4}}\big)(t')\Bigr) \dd t'.
\ea
By \eqref{Un-Ka=0}, we have
\ba\label{est-ua-2d-1-Ka=0}
 \|u_{a}(t,\cdot)\|_{H^{s-4}} &  = \|(u_{0} +  \e^{2} u_{2} )(t,\cdot)\|_{H^{s-4}}  \leq  C_0   + C_0   \e^{2},\\
 \|u_{a}(t,\cdot)\|_{L^{\infty}} &  = \| (u_{0} + \e^{2} u_{2})(t,\cdot)\|_{L^{\infty}}   \leq   C_0(1+t) ^{-\frac{3}{2}} +  C_0   \e^{2}.
  \ea

By \eqref{est-remainders-defo-0}--\eqref{est-ua-2d-1-Ka=0}, we obtain
\ba\label{est-dotU-defo-3d-1-1}
\|\dot U (t, \cdot)\|_{H^{s-4}} \leq  C_0 (1+t)
+& \int_{0}^{t} \Big(C_0  (1+t')^{-\frac{3}{2}}  + C_0  \e^{2} + C_0  \e \|\dot u\|_{H^{s-4}}\\
 &\qquad +C\e^{2} \|\dot u\|_{H^{s-4}}^{2}\Big)\|\dot u\|_{H^{s-4}}(t') \dd t'.
\ea

\medskip

 Let $0< \hat T_{0}\leq 1$ to be determined and let $t\leq \frac{ \hat T_{0}}{\sqrt\e}$.  Then we have
\ba\label{est-ini-0-3d}
C_0  (1+t) \leq \frac{ 2 C_0 }{\sqrt\e}.
\ea
Define
\be\label{def-TT-4}
\widetilde T_{3} :=\sup \Big\{t < \min\big\{T_{\e}^{*}, \frac{\hat T_{0}}{\sqrt\e} \big\}: \|\dot U \|_{L^\infty(0,t;H^{s-4})} \leq  e^{3C_0 } \frac{2 C_0 }{\sqrt\e}\Big\}.
\ee
Then for all $t\leq \widetilde T_{3}$, by  \eqref{est-dotU-defo-3d-1-1}and \eqref{est-ini-0-3d}, there holds
\ba\label{est-dotU-defo-3d-2-0}
\|\dot U (t, \cdot)\|_{H^{ s_{a}-4}} \leq \frac{ 2 C_0 }{\sqrt\e} + \int_{0}^{t}  \Big(C_0  (1+t')^{-\frac{3}{2}}  +  \sqrt \e \hat C_0  \Big)\|\dot u(t',\cdot)\|_{H^{s-4}}\, \dd t',
\ea
with
$$
\hat C_0  : =   C_0   \e^{\frac {3}{2}} + e^{3C_0 } 2  C_0^2  + C \sqrt\e e^{6C_0 } 2^{2}C_0 ^{2}.
$$
Applying Gronwall's inequality to \eqref{est-dotU-defo-3d-2-0} implies for all $t\leq \widetilde T_{3}$ that
\ba\label{est-dotU-defo-3d-3-0}
\|\dot U (t, \cdot)\|_{H^{\tilde s_{a}}} & \leq  \frac{ 2 C_0 }{\sqrt\e} e^{\int_{0}^{t}  \big(C_0  (1+t')^{-\frac{3}{2}}  +  \sqrt \e \hat C_0  \big)  \dd t' }
 \leq \frac{ 2 C_0 }{\sqrt\e}  e^{2 C_0   +  \hat T_{0}\hat C_0  }.\nn
\ea
Choose $0< \hat T_{0}\leq 1$ small such that
\ba\label{T0-choice-1-0}
\hat T_{0}\hat C_0  = \hat T_{0}\big(C_0   \e^{\frac {3}{2}} + e^{3C_0 } 2  C_0^2  + C \sqrt\e e^{6C_0 } 2^{2}C_0 ^{2} \big) \leq  \frac{1}{2} C_0 .
\ea
Then for all  $t\leq \widetilde T_{3}$ with $\hat T_{0}$ satisfying \eqref{T0-choice-1-0},  there holds
\ba\label{est-dotU-defo-3d-4-0}
\|\dot U (t, \cdot)\|_{H^{\tilde s_{a}}}  \leq \frac{ 2 C_0 }{\sqrt\e}    e^{\frac{5}{2} C_0 }.\nn
\ea
By the definition of $\widetilde T_{3}$ in \eqref{def-TT-4} and continuation argument, we know for each $0< \hat T_{0}\leq 1$ satisfying \eqref{T0-choice-1-0} there holds $\widetilde T_{3} =\min\{T_{\e}^{*}, \frac{\hat T_{0}}{\sqrt \e}\}$. Similarly one can employ the blow up criteria \eqref{bp-cre} and further show $T_{\e}^{*} > \frac{\hat T_{0}}{\sqrt \e}$ and   $\widetilde T_{3} = \frac{\hat T_{0}}{\sqrt \e}$.

Therefore, by \eqref{est-dotU-defo-3d-1-0},  for each $0< \hat T_{0}\leq 1$ satisfying \eqref{T0-choice-1} and for all $t\leq \frac{\hat T_{0}}{\sqrt\e}$, there holds
\ba\label{est-dotU-defo-3d-5-0}
&\|\dot U (t, \cdot)\|_{H^{\tilde s_{a}}} \leq    C_0  (1+t) + \int_{0}^{t}  \Big(C_0  (1+t')^{-\frac{3}{2}}  +  \sqrt \e \hat C_0  \Big)\|\dot u(t',\cdot)\|_{H^{s-4}} \,\dd t'.
\ea
Applying Gronwall's inequality to \eqref{est-dotU-defo-3d-5-0} and using \eqref{T0-choice-1-0} implies
\ba\label{est-dotU-defo-3d-6-0}
\|\dot U (t, \cdot)\|_{H^{\tilde s_{a}}} \leq C_0 \, e^{\frac{5}{2}  C_0 }  (1+t), \quad \mbox{for all $t\leq \frac{ \hat T_{0}}{\sqrt\e}$}.\nn
\ea
This implies \eqref{stable-defo-3d-0} and completes the proof of  Theorem  \ref{thm:sta-defo} (ii).

\subsection{Proof of Theorem \ref{thm:sta-defo}: Part (iii)}

We finally consider the focusing case $\l<0$. Recll that $T^{*}$ is the existence time of \eqref{eq-g-cub}. Since \eqref{eq-dotU} is a symmetric hyperbolic system in $\dot U$, there exists a unique solution $\dot U\in C([0,T_{\e}^{*}); H^{\tds_{a}} )$ with $T_{\e}^{*}$ the existence time.   Then for all $t\in (0,\min\{T^{*}, T_{\e}^{*}\})$, 
\ba\label{est-dotU-fo-1}
\|\dot U (t, \cdot)\|_{H^{\tilde s_{a}}} \leq & \e \| r_\e \|_{H^{\tilde s_{a}}} + \int_{0}^{t}\| R_{\e}(t',\cdot)\|_{H^{\tds_{a}}} \dd t' \\
&+ \int_{0}^{t} C\big((\| u_{a}\|_{H^{\tilde s_{a}}}^{2}  + \e^{K_{a}+1} \| u_{a} \|_{H^{\tilde s_{a}}} \|\dot u\|_{H^{\tds_{a}}}+\e^{2(K_{a}+1)} \|\dot u\|_{H^{\tds_{a}}}^{2})\|\dot u\|_{H^{\tds_{a}}}\big)(t') \dd t'.
\ea
By  \eqref{est-remainders-fo}  in Theorem \ref{eq-WKB-fo} concerning the estimates of  $R_{\e}$ and $r_{\e}$, by Proposition \ref{prop-reg-gn} concerning the estimates of $U_{a}$, we can deduce from \eqref{est-dotU-fo-1} that
\ba\label{est-dotU-fo-2}
\|\dot U (t, \cdot)\|_{H^{\tilde s_{a}}} \leq& \e C_{0} + t\, D(t) \\
&+ \int_{0}^{t}\big(( D(t')  +  D(t') \e^{K_{a}+1} \|\dot u\|_{H^{\tds_{a}}}+ C \e^{2(K_{a}+1)} \|\dot u\|_{H^{\tds_{a}}}^{2})\|\dot u\|_{H^{\tds_{a}}}\big)(t') \dd t'.
\ea

Let $T<T^*$ be an arbitrary positive number.  We shall show $\liminf_{\e\to 0} T_{\e}^{*} \geq T^{*}$. To do so,  it is sufficient to show there exists $\e_0>0$ such that $ T_\e^* \geq  T$ for all $0<\e<\e_0$. Applying Gronwall's inequality to \eqref{est-dotU-fo-2} implies for all $ t <\min\{T,T_{\e}^{*}\}$ that
\ba\label{est-dotU-fo-3}
\|\dot U \|_{L^{\infty}(0,t;H^{\tilde s_{a}})} \leq \big( \e C_0 + t D(t)\big) e^{ t D(t) (1  +   \e^{K_{a}+1} \|\dot u\|_{L^{\infty}(0,t;H^{\tilde s_{a}})})
 + t C \e^{2(K_{a}+1)} \|\dot u\|^{2}_{L^{\infty}(0,t;H^{\tilde s_{a}})}}.
\ea
Define
 $M(T):= \big( C_0 + T D (T)\big)e^{2 T (D(T)+C)},$
and
\be\label{def-TT-1}
\widetilde T_{1} :=\sup \big\{t< \min\{T,T_{\e}^{*}\}: \|\dot U \|_{L^\infty(0,t;H^{\tds_{a}})} \leq  M(T)\big\}.
\ee
If $\widetilde T_{1} < \min\{T,\widetilde T_\e^*\}$, we then deduce from \eqref{est-dotU-fo-3} for all $t\leq \widetilde T_{1} $, that
\ba\label{est-dotU-fo-4}
 \|\dot U \|_{L^{\infty}(0,t;H^{\tilde s_{a}})} \leq  \big( C_0 + T D(T)\big) e^{T D(T)\left(1  +  \e^{K_{a}+1} M(T)\right) + T C \e^{2(K_{a}+1)}  M(T)^{2}}.\nn
\ea
Let $\e_{0}>0$ small such that
$$\e_{0}^{K_{a}+1} M(T) + \e_{0}^{2(K_{a}+1)}  M(T)^{2} \leq \frac{1}{2}.$$
Then for any $0<\e<\e_0$, there holds
\ba\label{est-dotU-fo-5}
 \|\dot U \|_{L^{\infty}(0,t;H^{\tilde s_{a}})} \leq \big( C_0 + T D(T)\big) e^{ \frac{3}{2}T D(T)  +  \frac{1}{2}T C}.\nn
\ea
By the definition of $\widetilde T_{1}$ in \eqref{def-TT-1}, using continuation argument implies that
\be\label{exist-time-1}\widetilde T_{1} \geq  \min\{T,  T_\e^*\},\quad \mbox{for each $0<\e<\e_0$}.\nn
\ee
Thus, the existence time $T_{\e}^{*} \geq T$ for each $0<\e<\e_0$. Since $T<T^*$ is an arbitrary number, we obtain  $\liminf_{\e\to 0} T_{\e}^{*} \geq T^{*}$ which is exactly \eqref{exist-time-fo}. Moreover, for each $t<\min\{T^{*}, T_{\e}^{*}\}$ there holds
$
\|\dot U(t,\cdot)\|_{H^{\tds_{a}}} \leq D(t).
$
We thus complete the proof of Theorem \ref{thm:sta-defo} (iii) by  noticing
$
U  - U_{a} = \e^{K_{a}+1} \dot U.
$

\section{Error estimates in the nonrelativistic regime} \label{sec:sta-KG-S}
In this section, we will finally prove Theorems \ref{thm2} and \ref{thm4} which are actually corollaries of Theorems \ref{thm:sta-defo}. 

Indeed, Theorem \ref{thm4} is a direct corollary of Theorem \ref{thm:sta-defo}, Theorem \ref{thm2} (ii) is direct corollary of Theorem \ref{thm:sta-defo} (ii). 

 Theorem \ref{thm2} (i) is  a corollary of Theorem \ref{thm4} (i). Since $s>8+\frac{d}{2} = 2 K_{a} + 4 + \frac{d}{2}$ with $K_{a} = 2$, then applying Theorem \ref{thm4} (i) gives 
 \ba\label{thm2-pf-3d}
\|(u- u_{0} -\e^{2} u_{2} - \e^{4} u_{4})(t,\cdot)\|_{H^{s-8}(\R^{3})}  \leq C_0(1+t) ^{2}\e^{3}, \quad   \mbox{for all $ t \leq \frac{T_{0}}{\e}$,}
\ea
where $u_{0}, u_{2}, u_{4}$ satisfy  \eqref{unp-form-thm4} and \eqref{est-ua-thm4}.  We deduce from \eqref{thm2-pf-3d} that
 \ba\label{thm2-pf-3d-1}
\|(u- u_{0})(t,\cdot)\|_{H^{s-8}} &\leq  \e^{2}\| u_{2} (t,\cdot)\|_{H^{s-8}}  + \e^{4}\| u_{4} (t,\cdot)\|_{H^{s-8}}   + C_0(1+t) ^{2}\e^{3}\\
&\leq  C_0  (1+t) \e^{2}\big(1+  \e^{2}   +  (1+t) \e\big)\\
&\leq C_0  (1+t) \e^{2}, \quad \mbox{for all $ t \leq \frac{T_{0}}{\e}$,}\nn
\ea
which is exactly \eqref{stable-defo-thm2-3d}.


\section*{Acknowledgments}
This work was partially supported by the Ministry of Education of Singapore under its  AcRF Tier 2 funding MOE-T2EP20122-0002 (A-8000962-00-00) (W. Bao),  by NSF of China under Grant 12171235 (Y. Lu), and by NSF of China under Grants  12171010 and 12288101 (Z. Zhang). Part of the work was done when the first two authors were visiting the Institute for Mathematical Sciences at the National University of Singapore in February 2023. The authors would like to thank the anonymous referees for the valuable comments and suggestions.



\begin{thebibliography}{WWW}

\bibitem{BCZ14} W. Bao, Y. Cai, X. Zhao. \emph{A uniformly accurate multiscale time integrator pseudospectral method for the Klein-Gordon equation in the nonrelativistic limit regime}, SIAM J. Numer. Anal., 52 (2014), 2488-2511.

\bibitem{BD12} W. Bao, X. Dong. \emph{Analysis and comparison of numerical methods for the Klein-Gordon equation in the nonrelativistic limit regime}, Numer. Math.,  120 (2012), 189-229.

\bibitem{BZ16} W. Bao, X. Zhao. \emph{A uniformly accurate multiscale time integrator spectral method for the Klein-Gordon-Zakharov system in the high-plasma-frequency limit regime}, J. Comput. Phys., 327 (2016), 270-293.


\bibitem{BZ17} W. Bao and X. Zhao. \emph{A uniformly accurate (UA) multiscale time integrator Fourier pseoduspectral method for the Klein-Gordon-Schr\"{o}dinger equations in the nonrelativistic limit regime},  Numer. Math., 135 (2017),  833-873.

\bibitem{BZ19} W. Bao and X. Zhao. \emph{Comparison of numerical methods for the nonlinear Klein-Gordon equation in the nonrelativistic limit regime}, J. Comput. Phys., 398 (2019), 108886.

\bibitem{BZ19B} W. Bao and X. Zhao. \emph{A uniform second-order in time multiscale time integrator for the nonlinear Klein-Gordon equation in the nonrelativistic regime}, preprint, 2019.



\bibitem{book-Bour-Sch} J. Bourgain. \emph{Global Solutions of Nonlinear Schr\"odinger equations}, Colloquium Publications Volume 46,  American Mathematical Society,  1999.



\bibitem{CKSTT04} J. Colliander, M. Keel, G. Staffilani, H. Takaoka,  T. Tao.  \emph{Global existence and scattering for rough solutions of a nonlinear Schr\"odinger equation on $\R^3$},  Comm. Pure Appl. Math.,  57 (2004), 987-1014.

\bibitem{CKSTT08} J. Colliander, M. Keel, G. Staffilani, H. Takaoka,  T. Tao. \emph{Global well-posedness and scattering for the energy-critical nonlinear Schr\"odinger equation in $\R^{3}$},  Ann. of Math., 167 (2008), 767-865.

\bibitem{DJMR96} P. Donnat, J.-L. Joly, G. M\'etivier,  J. Rauch. \emph{Diffractive nonlinear geometric optics.}  S\'emin. \'Equ. D\'eriv. Partielles, \'Ecole Polytechnique, Centre de Math\'ematiques, Palaiseau, 1996, Exp. No. XVII, 25 pp.

\bibitem{Dumas1} E. Dumas, \emph{About nonlinear geometric optics}, Bol. Soc. Esp. Mat. Apl. SeMA No. 35 (2006) 7-41.

\bibitem{FSZ22} C. Fan,  G. Staffilani, Z. Zhao. \emph{On decaying properties of nonlinear Schr\"odinger equations}. arXiv:2211.03124.


\bibitem{FZ20} C. Fan, Z. Zhao.  \emph{Decay estimates for nonlinear Schr\"odinger equations}.  Discrete Contin. Dyn. Syst., 41 (2021), 3973-3984.

\bibitem{GV1}J. Ginibre, G. Velo. \emph{The global Cauchy problem for the nonlinear Klein-Gordon equation},  Math. Z.,  189 (1985), 487-505.


\bibitem{GV2}J. Ginibre, G. Velo. \emph{The global Cauchy problem for the nonlinear Klein-Gordon equation II},  Ann. Inst. H. Poincar\'e Anal. Non Lin\'eaire,  6 (1989), 15-35.

\bibitem{HT86} N. Hayashi, M. Tsutsumi. \emph{$L^{\infty}(\R^{n})$-decay of classical solutions for nonlinear Schr\"odinger equations}, Proceedings of the Royal Society of Edinburgh Section A: Mathematics, 104 (1986), 309-327.

\bibitem{HN98} N. Hayashi, P. I. Naumkin.  \emph{Asymptotics for large time of solutions to then onlinear Schr\"odinger and Hartree equations}, Amer. J. Math., 120 (1998), 369-389.

\bibitem{JMR2} J.-L. Joly, G. M\'etivier, J. Rauch. \emph{Transparent nonlinear geometric optics and Maxwell-Bloch equations}, J. Differential Equations,  166 (2000), 175-250.

 
\bibitem{KM10} C. E. Kenig, F. Merle. \emph{Scattering for $\dot H^{1/2}$ bounded solutions to the cubic, defocusing NLS in $3$ dimensions}. Trans. Amer. Math. Soc.,  362 (2010), 1937-1962.



\bibitem{LZ16} Y. Lu, Z. Zhang. \emph{Partially strong transparency conditions and a singular localization method in geometric optics.}
 Archive for Rational Mechanics and Analysis, 222 (2016), 245--283.

 \bibitem{LZ17} Y. Lu, Z. Zhang. \emph{Higher order asymptotic analysis of the nonlinear Klein-Gordon equation in the nonrelativistic limit regime.} Asymptotic Analysis, 102 (3-4) (2017) 157--175.


\bibitem{M} S. Machihara.  \emph{The nonrelativistic limit of the nonlinear Klein-Gordon equation}, Funkcial. Ekvac.,  44 (2001),  243-252.

\bibitem{MN1} S. Machihara, K. Nakanishi, T. Ozawa. \emph{Nonrelativistic limit in the energy space for nonlinear Klein-Gordon equations}, Math. Ann., 322 (2002), 603-621.

 

\bibitem{MN2} N. Masmoudi, K. Nakanishi. \emph{From nonlinear Klein-Gordon equation to a system of coupled nonliner Schr\"odinger equations},  Math. Ann.,  324 (2002), 359-389.

\bibitem{MN3} N. Masmoudi, K. Nakanishi. \emph{Nonrelativistic limit from Maxwell-Klein-Gordon and Maxwell-Dirac to Poisson-Schr\"odinger}, Int. Math. Res. Not. , 13 (2003), 697-734.

\bibitem{MN4} N. Masmoudi, K. Nakanishi. \emph{From the Klein-Gordon-Zakharov system to the nonlinear Schr\"odinger equation},  J. Hyperbolic Differential Equations, 2 (2005),  975-1008.

\bibitem{MN5} N. Masmoudi, K. Nakanishi. \emph{Energy convergence for singular limits of Zakharov type systems}, Invent. Math., 172 (2008) 535-583.

\bibitem{MN6} N. Masmoudi, K. Nakanishi. \emph{Two asymptotic problems for a singular nonlinear Schr\"odinger system}, Amer. J. Math., 132 (2010), 1311-1338.



\bibitem{Met08} G. M\'etivier.  \emph{Para-differential Calculus and Applications to the Cauchy Problem
for Nonlinear Systems}, Centro di Ricerca Matematica Ennio De Giorgi (CRM) Series, 5. Edizioni della Normale, Pisa,  2008. xii.


\bibitem{MS} C. Morawetz,  W. Strauss. \emph{ Decay and scattering of solutions of a nonlinear relativistic wave
equation},  Comm. Pure Appl. Math.,  25 (1972), 1-31.

\bibitem{BN} B. Najman. \emph{The nonrelativistic limit of the nonlinear Klein-Gordon equation}, Nonlinear Anal., 15 (1990), 217-228.

\bibitem{Nak02} K. Nakanishi.  {\em Nonrelativistic limit of scattering theory for nonlinear Klein-Gordon equations}, J. Differential Equations, 180 (2002),  453-470.


 
\bibitem{Oz91} T. Ozawa. Long range scattering for nonlinear Schr\"odinger equations in one space dimension, Comm. Math. Phys., 139 (1991), 479-493.

\bibitem{SZ20} K. Schratz, X. Zhao. \emph{On comparison of asymptotic expansion techniques for nonlinear Klein-Gordon equation in the nonrelativistic limit regime.} Discrete \& Continuous Dynamical Systems-B, 25 (2020), 2841-2865.
 

\bibitem{SV} W. Strauss,  L. V\'azquez.  \emph{Numerical solution of a nonlinear Klein-Gordon equation}, J. Comput. Phys., 28 (1978),  271-278.

 
\bibitem{Texier04} B. Texier.  {\em The short wave limit for nonlinear, symmetric hyperbolic systems,} Adv. Diff. Eq., 9  (2004), 1--52.

\bibitem{MT} M. Tsutsumi.  \emph{Nonrelativistic approximation of nonlinear Klein-Gordon equations in two space dimensions}, Nonlinear Anal., 8 (1984), 637-643.

 


 \end{thebibliography}
\end{document}